\providecommand{\R}{}
\providecommand{\Z}{}
\providecommand{\N}{}
\providecommand{\U}{}
\renewcommand{\R}{\mathbb{R}}
\renewcommand{\Z}{\mathbb{Z}}
\renewcommand{\N}{{\mathbb N}}
\renewcommand{\P}{\mathbb{P}}
\renewcommand{\U}{\mathbb{U}}
\def \W {\mathsf{W}}
\newcommand{\E}[1]{{\mathbb{E}}\left[#1\right]}
\newcommand{\V}[1]{{\mathbf{Var}}\left\{#1\right\}}
\newcommand{\p}[1]{{\mathbb{P}}\left(#1\right)}
\newcommand{\I}[1]{{\mathbbm 1}_{\{#1\}}}
\newcommand{\un}{\mathbbm{1}}
\newcommand\cC{{\mathcal C}}
\newcommand\cF{\mathcal F}
\newcommand\cP{\mathcal P}
\newcommand\cS{{\mathcal S}}
\newcommand\cT{{\mathcal T}}
\newcommand{\optionaldesc}[2]{%
  \phantomsection
  #1\protected@edef\@currentlabel{#1}\label{#2}%
}
\newcommand{\rA}{\mathrm{A}} 
\newcommand{\rB}{\mathrm{B}}
\newcommand{\rL}{\mathrm{L}} 
\newcommand{\rM}{\mathrm{M}}
\newcommand{\rS}{\mathrm{S}}
\newcommand{\ru}{\mathrm{u}}
\newcommand{\rg}{\mathrm{g}}
\newcommand{\rd}{\mathrm{d}}
\newcommand{\rk}{\mathrm{k}}
\newcommand{\eqdist}{\ensuremath{\stackrel{\mathrm{d}}{=}}}
\providecommand{\ora}[1]{}
\renewcommand{\ora}[1]{\overrightarrow{#1}}
\DeclareRobustCommand{\SkipTocEntry}[5]{} 
\newcommand{\sgn}{\operatorname{sgn}}
\newcommand{\tree}{t}
\newcommand{\bien}{Bienaym\'e }
\newcommand{\dseq}{\mathtt{d}}
\newcommand{\nseq}{\mathtt{n}}
\newcommand{\wseq}{\mathrm{w}}
\newcommand{\bseq}{\mathtt{b}}
\renewcommand{\H}{{\mathsf{Ht}}}
\renewcommand{\W}{\mathsf{Wd}}
\newcommand{\dfs}{\mathrm{df}}
\newcommand{\bfs}{\mathrm{bf}}
\newtheorem{thm}{Theorem}
\newtheorem{lem}[thm]{Lemma}
\newtheorem{prop}[thm]{Proposition}
\newtheorem{definition}[thm]{Definition}
\newtheorem{remark}[thm]{Remark}
\numberwithin{equation}{section}
\numberwithin{thm}{section}
\begin{document}
\date{December 31, 2024} 

\title{Tight universal bounds on the height times the width of random trees}

\author{Serte Donderwinkel}
\address{Bernoulli Institute and CogniGron, University of Groningen, Groningen, The Netherlands}
\email{s.a.donderwinkel@rug.nl}

\author{Robin Khanfir}
\address{Department of Mathematics and Statistics, McGill University, Montr\'eal, Canada}
\email{robin.khanfir@mcgill.ca}

\keywords{Random trees, Bienaymé--Galton--Watson trees, simply generated trees, uniform trees with fixed degrees, height, width}
\subjclass[2010]{60C05,60J80,05C05} 

\begin{abstract} 
{We obtain assumption-free, non-asymptotic, uniform bounds on the product of the height and the width of uniformly random trees with a given degree sequence, conditioned Bienaym\'e trees and simply generated trees. We show that for a tree of size $n$, this product is $O(n\log n)$ in probability, answering a question by Addario-Berry~\cite{short_fat}. The order of this bound is tight in this generality.}
\end{abstract} 

\maketitle


\section{Introduction}

For a rooted tree $t$, its \emph{size} $\# t$ is the number of nodes of $t$, its \emph{height} $\H(t)$ is the maximum distance of a node of $t$ to the root, and its \emph{width} $\W(t)$ is the maximum generation size in $t$. These statistics give a rough idea of the global shape of the tree, so that the estimation of their orders of magnitude is often the first step towards a finer description of the geometry of the tree. This has motivated a vast body of work, going back to the 1950s, that is devoted to separate studies of the height and of the width for various models of random trees with fixed size~\cite{HarPri59,Riordan,RenSze67,kemp_height,Flajolet82,Tackacs93,ChaMarYor}. In this paper, we focus on the three-way relation between the size, the height, and the width, and we show that they are closely linked for a large class of random trees. This in particular resolves an open question by Addario-Berry~\cite{short_fat}. We first state our results and then present an overview of the literature. After that, we briefly discuss our methods and possible extensions of our work, and pose some open questions.

\subsection{Results}

We begin by focusing on the most elementary random tree model for representing population growth. Let $\mu$ be a probability distribution on $\N=\{0,1,2,\dots\}$. A \emph{$\mu$-Bienaymé tree}, denoted by $T_\mu$, is a random plane tree that represents the family tree of a branching process with offspring distribution $\mu$\footnote{In the literature, the names \emph{Galton--Watson tree} and \emph{Bienaymé--Galton--Watson tree} are also used.}. Then, for each $n$ such that $\P(\#T_{\mu}=n)>0$, let $T_{\mu,n}$ be a $\mu$-\bien tree conditioned to have size $n$.

\noindent In \cite{short_fat}, Addario-Berry asked the following question.
\begin{equation}
\tag{Q}\label{question}
\parbox{\dimexpr\linewidth-4em}{
\strut
Are there examples of offspring distributions for which $\W(T_{\mu,n} )\H(T_{\mu,n})/n$ is much larger than $\log n$ with non-vanishing probability?
\strut
}
\end{equation}

\noindent Our work answers the question from \cite{short_fat} in the negative. In fact, our result is more general, because we obtain an explicit tail-bound for the product of height and width of $T_{\mu,n}$ divided by $n\log n$ that does not depend on $n$ nor $\mu$.

\begin{thm}\label{thm:bien}
   For any $\mu$ and for any $n\geq 3$ such that $\P(\#T_{\mu}=n)>0$, for all $s>0$,
    \[\P(\W(T_{\mu,n})\H(T_{\mu,n})>sn\log n)\leq 230s^{-2/13}.\]
\end{thm}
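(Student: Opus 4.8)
The plan is to reduce to uniform random trees with a given degree sequence and then run a two-sided argument on the height profile. Conditionally on its degree sequence $\mathbf d=(\mathbf d(v))_v$, the tree $T_{\mu,n}$ is a uniformly random plane tree $T_{\mathbf d}$ with that degree sequence (indeed $\P(T_\mu=t)$ depends on $t$ only through its degree multiset), and the same reduction applies to simply generated trees; so by averaging over $\mathbf d$ it suffices to prove, \emph{assumption-free}, that $\P(\W(T_{\mathbf d})\H(T_{\mathbf d})>sn\log n)\le 230\,s^{-2/13}$ for every degree sequence $\mathbf d$ of a plane tree on $n\ge 3$ vertices and every $s>0$. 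Exploring $T_{\mathbf d}$ generation by generation, write $Z_\ell$ for the number of vertices at height $\ell$: then $Z_0=1$, the quantity $Z_{\ell+1}$ is the sum of the children-numbers of the $Z_\ell$ vertices at height $\ell$ (a degree-biased sample, without replacement, from the not-yet-placed degrees, up to the usual validity constraint), and $\W(T_{\mathbf d})=\max_\ell Z_\ell$ while $\H(T_{\mathbf d})=\max\{\ell\ge 0:Z_\ell\ge 1\}$. The quantity that governs everything is $\rho:=\sqrt{\sum_v(\mathbf d(v)-1)^2}\ (\ge 1)$: heuristically the width sits at scale $\rho$ and the height at scale $n/\rho$, so their product sits at scale $n$, and the surplus $\log n$ is the price of passing from "typical" estimates to bounds uniform in $\mathbf d$.

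The core is then a pair of tail bounds, combined, writing $\W,\H$ for $\W(T_{\mathbf d}),\H(T_{\mathbf d})$, via the elementary inclusion $\{\W\H>sn\log n\}\subseteq\{\W>w\}\cup\{\H>sn\log n/w\}$, i.e.
\[
\P(\W\H>sn\log n)\ \le\ \P(\W>w)+\P\!\left(\H>\tfrac{sn\log n}{w}\right),
\]
with the threshold $w=w(\mathbf d,s)$ set at the natural width scale of $T_{\mathbf d}$. For the \textbf{width} I would control $\max_\ell Z_\ell$ by a maximal / second-moment inequality along the generation-by-generation exploration: the quadratic variation of $(Z_\ell)$ is governed by $\rho^2$, the finitely many large atoms of $\mathbf d$ (at most $n/\tau$ of them exceed $\tau$) are absorbed by truncating at a level $\tau$, and the validity constraint is handled using that the exploration is revealed sequentially with explicit transition probabilities; after optimizing $\tau$ this yields $\P(\W(T_{\mathbf d})>\lambda\rho)\lesssim\lambda^{-p}$, schematically. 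For the \textbf{height} I would use the many-to-one bound $\P(\H\ge h)\le n\,\P(\operatorname{depth}(V)\ge h)$ for a uniformly chosen vertex $V$, together with the spinal decomposition of $T_{\mathbf d}$ — which presents the ancestral line of $V$ as a size-biased path read off from $\mathbf d$ — to show that $\operatorname{depth}(V)$ has at worst geometric tails, with rate $\gtrsim\rho/n$; hence $\H(T_{\mathbf d})\lesssim (n/\rho)\log(n/\delta)$ with probability $\ge 1-\delta$. Since the width scale times this height scale is $O(n\log(n/\delta))$, the inequality above with $w$ at the width scale, $\delta$ a suitable power of $s$, and an optimization over $\tau$, over the slack in the maximal inequality, and (if needed) over a split of the tree into a high and a low part, delivers the bound $230\,s^{-2/13}$; the exponent is just the output of balancing the polynomial loss of the width estimate against the logarithmic, geometric-rate loss of the height estimate.

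I expect the height bound in full generality to be the main obstacle. With no moment hypothesis on $\mu$, the degree sequence can be extremely inhomogeneous — say a few vertices of degree $\Theta(n)$ alongside $\Theta(n)$ vertices of degree $1$ — and in that regime the familiar diffusive $\sqrt n$-scale heuristic for the height is simply false: the height is genuinely of order $\log n$, for balls-in-bins reasons, and it is precisely this phenomenon that both forces the $\log n$ and makes the order in the theorem tight. Making the geometric-tail estimate for $\operatorname{depth}(V)$ clean and uniform in $n$ and $\mu$ is the delicate point: one must simultaneously tame the sampling-without-replacement dependence met while exploring the tree generation by generation and the distortion coming from the validity constraint, and then show that the generation sizes cannot decay at a rate slower than geometric along the whole exploration. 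By comparison, the reduction, the bookkeeping, and the width estimate are routine.
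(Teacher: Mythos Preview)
Your reduction to a fixed degree (or type) sequence is correct and is exactly how the paper begins. After that point, however, your plan and the paper's diverge in a substantive way, and your plan has a real gap at the place you yourself flag.

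\medskip

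\textbf{Where the approaches differ.} You propose to bound $\W$ and $\H$ \emph{separately} at their respective natural scales $\rho$ and $(n/\rho)\log n$, and then union-bound via $\{\W\H>sn\log n\}\subset\{\W>w\}\cup\{\H>sn\log n/w\}$. The paper never bounds $\H(T)$ on its own. Instead it replaces $\W$ by the maximal right-spinal weight $\max_u\rS_u^{\rd}(T)$ (these are of the same order, since both equal the range of an exchangeable bridge up to constants; this is your width step, essentially) and replaces $\H$ by the second-order height $\H^{(2)}(T)$ (a separate argument showing a uniform tree is not line-like). The heart of the proof is then a \emph{joint} bound: for every vertex $u$,
\[
\rS_u^{\rd}(T)\cdot\max_{v\geq u}\big(|v|-|u\wedge v|\big)\ \lesssim\ n\log n
\]
with high probability. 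The point is that the $\rS_u^{\rd}(T)$ subtrees hanging off the right of the spine of $u$ share at most $n$ vertices, and by a stochastic-domination argument their heights are dominated by those of a uniform composition of $n$ into $\rS_u^{\rd}(T)$ paths, whose maximum part is $\lesssim (n/\rS_u^{\rd}(T))\log n$. Thus the product bound comes for free \emph{without} ever establishing that $\H(T)\lesssim (n/\rho)\log n$ globally.

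\medskip

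\textbf{The gap in your plan.} Your height step asserts that $\operatorname{depth}(V)$ has geometric tails with rate $\gtrsim \rho/n$, hence $\H\lesssim (n/\rho)\log(n/\delta)$ uniformly in the degree sequence. This is exactly the hard statement, and the spinal heuristic you give does not prove it. The spine of a uniform vertex in $T_{\mathbf d}$ is not a size-biased i.i.d.\ walk: the degrees along it are sampled without replacement and under the global constraint that the walk stays positive, and the ``rate'' you extract depends delicately on how the large and small degrees interleave along the spine. Getting a clean, assumption-free exponential bound with the correct rate $\rho/n$ is essentially the content of an independent line of work (Addario-Berry--Donderwinkel, Blanc-Renaudie), and even there the bounds take a more intricate form than a single geometric rate. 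Your many-to-one step is also lossy in the heavy-tailed/condensation regime --- the one regime where the $\log n$ is actually needed --- because there the height is realised by an $o(n)$ fraction of vertices, and you must recover an extra factor of $n$ from the tail of $\operatorname{depth}(V)$, which your sketch does not address. In short, the step you label ``the main obstacle'' is a genuine obstacle: the paper's contribution is precisely to avoid it by trading the global height for a local pendant-forest height that is controlled by its number of components.
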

Note that for any rooted tree $t$ with $\#t=n$, the height times the width of $t$ is at least $n-1$ deterministically. The lower bound is realised if all generation sizes in $t$ (other than the root) are exactly equally large. However, $\H(t)\W(t)$ can be as big as $n^2/4$ when $t$ has one generation of size $n/2$ and all its other generations have size $1$. Our result shows that the product of the height and the width of a random trees is, up to a logarithmic factor, of the same order as the deterministic lower bound.

The following explicit examples show that the order of the bound in Theorem~\ref{thm:bien} is tight. Kortchemski~\cite{Kor15_subcritical} showed that for some non-generic subcritical offspring distributions $\mu$, it holds that $\W(T_{\mu,n} )\H(T_{\mu,n})$ is of order $n\log n$ in probability. Moreover, Addario-Berry~\cite{short_fat} also constructed a critical (i.e.~with mean $1$) offspring distribution with this property. Further such offspring distributions, that all have mean $1$ and are in the domain of attraction of a Cauchy distribution, were given by Addario-Berry, the first author \&  Kortchemski~\cite[Section~3.6]{critical_short_fat}. 

Theorem~\ref{thm:bien} is a consequence of a more general result for \emph{simply generated trees}. The class of simply generated trees is a family of random trees that contains all conditioned Bienaym\'e trees and is defined as follows. Fix non-negative real weights $\wseq=(w_k,k\geq 0)$ with $w_0>0$. Given a finite plane tree $t$, we define the weight of $t$ to be 
\[\wseq(t)=\prod_{v\in t} w_{\rk_v(t)},
\]
where $\rk_v(t)$ stands for the out-degree of $v$ in $t$. 
For positive integers $n$, let
\[Z_n=Z_n(\wseq)=\sum_{\substack{\text{plane trees $t$}\\ \# t=n}} \wseq(\tree)
\]
be the total weight of trees of plane trees of size $n$. 
If $Z_n >0$, then we define a random tree $T_{\wseq,n}$ by setting
\[\P(T_{\wseq,n}=\tree)=\frac{\wseq(\tree)}{Z_n} \]
for plane trees $\tree$ with $\# t=n$. The random tree $T_{\wseq,n}$ is called \emph{the simply generated tree of size $n$ with weight sequence $\wseq$}. In particular, for $\mu$ an offspring distribution, if we set $\wseq=\mu$ then we get that $T_{\wseq,n}\eqdist T_{\mu,n}$ for all $n$ for which $Z_n(\wseq)=\P(\#T_{\mu}=n)>0$. We prove the following result for simply generated trees. 

\begin{thm}\label{thm:sg}
For any $\wseq$ and for any $n\geq 3$ for which $Z_n(\wseq)>0$, for all $s>0$,
    \[\P(\W(T_{\wseq,n})\H(T_{\wseq,n})>sn\log n)\leq 230s^{-2/13}.\]
\end{thm}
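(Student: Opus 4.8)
The plan is to reduce the statement to uniformly random plane trees with a prescribed degree sequence, and then to prove the bound there by random-walk estimates exploiting exchangeability of increments. Since $\wseq(t)=\prod_{k\ge 0}w_k^{\,n_k(t)}$, where $n_k(t)$ is the number of vertices of out-degree $k$, the conditional law of $T_{\wseq,n}$ given its degree sequence $\dseq$ is uniform over the plane trees of size $n$ with that degree sequence; write $\rT_\dseq$ for such a uniform tree. It therefore suffices to show that for every degree sequence $\dseq$ on $n\ge 3$ vertices and every $s>0$ one has $\P(\W(\rT_\dseq)\H(\rT_\dseq)>sn\log n)\le 230\,s^{-2/13}$, and then average over the random degree sequence of $T_{\wseq,n}$; this uniform-degree-sequence estimate is presumably the central technical result of the paper, with Theorems~\ref{thm:bien} and~\ref{thm:sg} as corollaries.

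Next I would set up the encodings. Fix $\dseq$ with out-degrees $d_1,\dots,d_n$ summing to $n-1$, and put $D=\sum_i d_i(d_i-1)$ and $\Delta=\max_i d_i$, noting $\Delta\le\sqrt{D+n}$. By the cycle lemma the Łukasiewicz path of $\rT_\dseq$ is distributed as the Vervaat transform of the bridge with increments a uniform random permutation of $(d_i-1)_{i\le n}$; the breadth-first (queue) walk satisfies the same type of constraint, so its increments given $\dseq$ are likewise exchangeable, and from it the generation sizes $Z_0,Z_1,\dots$ — hence $\W(\rT_\dseq)=\max_k Z_k$ and $\H(\rT_\dseq)$ — are recovered deterministically. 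Exchangeability makes both statistics amenable to maximal and ballot-type inequalities with explicit constants. I would then prove, uniformly in $\dseq$, two families of tails: first a second-moment estimate for the queue walk (bounding $\mathbb E[Z_kZ_\ell]$ by bridge return probabilities) giving $\P(\W(\rT_\dseq)>\lambda\sqrt{D+n})\lesssim\lambda^{-2}$, where the point is that $\Delta\le\sqrt{D+n}$ so a single heavy generation cannot beat that scale; and second a truncation estimate for the height — deleting the at most $(n-1)/K$ ``heavy'' increments (those exceeding a threshold $K$) leaves a Vervaat bridge with increments in $[-1,K]$, whose tree has height of order $n/\sqrt{D_K}$ with $D_K=\sum_{i:d_i\le K}(d_i-1)^2$, and reinserting the heavy increments and the subtrees hanging below heavy vertices costs only $O(n/K)$ plus a controlled correction — yielding $\P\bigl(\H(\rT_\dseq)>\lambda(n/\sqrt{D_K}+n/K)\bigr)\lesssim\lambda^{-c}$.

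Finally I would feed these into the elementary bound $\P(\W\H>sn\log n)\le\P(\W>a)+\P(\H>sn\log n/a)$, taking $a$ to be a small power of $s$ times $\sqrt{D+n}$ and choosing the truncation $K=K(\dseq)$ to balance $n/\sqrt{D_K}$ against $n/K$. Using $D_K\le Kn$ and union-bounding over the $\asymp\log n$ dyadic scales of $K$ — equivalently, over the logarithmically many scales on which the degree sequence can make the profile wide — injects exactly one factor $\log n$, after which the two threshold scales multiply to $\asymp n\log n$; selecting all tail parameters as suitable powers of $s$ and tracking the constants through the maximal inequalities then produces the exponent $2/13$ (the $2$ from the $L^2$ width bound, the $13$ from the arithmetic of the optimisation) and the explicit constant.

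The genuinely hard step is this combination. The statistics $\W$ and $\H$ are strongly anti-correlated — a path has $\W\H=n-1$, whereas a ``broom'' would have $\W\H\asymp n^2/4$ but is atypical — and each can individually be as large as $n$, so a careless union bound of the two separate tails does not close. One must therefore arrange the width and height estimates to depend on the \emph{same} functionals of $\dseq$ (morally $D$ and its truncation $D_K$), so that any degree sequence forcing the tree to be wide simultaneously forbids it from being deep beyond a logarithmic slack; and one needs the ballot/maximal inequalities for general exchangeable bridges, with explicit constants, rather than the usual domain-of-attraction asymptotics — which is precisely where the assumption-free, non-asymptotic character of the theorem is paid for.
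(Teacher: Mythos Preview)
Your reduction step is correct and matches the paper: conditioning a simply generated tree on its type (empirical degree) sequence yields a uniform plane tree of that type, so Theorem~\ref{thm:sg} follows by averaging from the fixed-type bound (Theorem~\ref{thm:nseq}). The substance of your proposal is therefore a sketch of a proof of Theorem~\ref{thm:nseq}, and that sketch contains a genuine gap.

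The problematic claim is the height estimate $\P\bigl(\H(\rT_\dseq)>\lambda(n/\sqrt{D_K}+n/K)\bigr)\lesssim\lambda^{-c}$ with universal constants. Take $n=m^2$ and the type with one vertex of out-degree $m$, $m$ leaves, and $m^2-m-1$ vertices of out-degree $1$. Then $D=\sum_i d_i(d_i-1)\asymp n$, and for any $K\ge m$ one has $D_K=D\asymp n$, so your height scale is $n/\sqrt{D_K}+n/K\asymp\sqrt{n}$. But the uniform plane tree of this type is a path to the hub followed by $m$ pendant paths whose lengths form (essentially) a uniform composition of $\asymp n$ into $m$ parts; its height is $\Theta(\sqrt{n}\log n)$ with probability tending to $1$, so $\P(\H>\lambda\sqrt{n})\to 1$ for every fixed $\lambda$, contradicting your bound. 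Choosing $K<m$ instead gives $D_K=m$ and height scale $n^{3/4}$, which is now valid but far too coarse to combine with the width bound $\W\asymp\sqrt{n}$: the product of scales is $n^{5/4}\gg n\log n$. Your dyadic union over $K$ does not rescue this, because for no single $K$ is the height tail both true and sharp enough; the $\log n$ you need is not a union-bound artefact but the genuine maximum-of-composition effect visible in this example. More broadly, the heuristic ``height $\asymp n/\sqrt{D}$'' comes from scaling limits and fails as a uniform tail bound precisely for the condensation-type degree sequences that saturate the $n\log n$ threshold.

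The paper avoids separate height and width bounds altogether. It replaces $\W(T)$ by the maximal right-spinal weight $\max_u\rS_u^{\rd}(T)$ (comparable via the coding walks, Proposition~\ref{width_vs_Luka}) and then bounds, \emph{for each spine simultaneously}, the product of the spinal weight with the maximal height among the subtrees hanging to its right (Proposition~\ref{bound_product_right-weight-height}). The key point is that, conditionally on the first $k$ steps of the \L ukasiewicz walk, those pendant subtrees form a forest with a known number $r=X_k^{\dfs}+1$ of components and at most $n$ vertices; a stochastic domination (Theorem~\ref{thm:stoch_ord}) shows the tallest such forest is the one whose components are paths, and there the maximal height is the largest part of a uniform composition, giving exactly the $\log n$. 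Two symmetrisation steps (Propositions~\ref{left-right_spine_weights} and~\ref{left-right_spine_height}) and a comparison of $\H$ with the second-order height $\H^{(2)}$ (Proposition~\ref{prop:not_line}) then assemble these into the bound on $\W\cdot\H$. The essential idea you are missing is this conditional coupling of height and width through the spinal decomposition, rather than trying to control them through a common scalar functional of $\dseq$.
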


We also prove the following result for uniform rooted trees with a given degree sequence. Fix $n\ge 1$ and $\dseq=(d_1,\ldots,d_n)\in \N^n$ that satisfies that $\sum_{i=1}^n d_i=n-1$. A \emph{rooted labelled tree with degree sequence $\dseq$} is an acyclic and connected graph with vertex-set $[n]=\{1,\ldots,n\}$ endowed with a distinguished vertex $r\in[n]$ such that the degree of $r$ is $d_r$ and such that the degree of any other $i\in[n]$ is $d_i+1$. Let $T_{\dseq}$ be a uniformly random rooted labelled tree with degree sequence $\dseq$. 

\begin{thm}\label{thm:dseq}
    For any $n\geq 3$ and $\dseq=(d_1,\dots,d_n)$ with $\sum_{i=1}^n d_i=n-1$, for all $s>0$,
    \[\P(\W(T_{\dseq})\H(T_{\dseq})>sn\log n)\leq 230s^{-2/13}.\]
\end{thm}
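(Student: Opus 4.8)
The plan is to deduce this from Theorem~\ref{thm:sg} by a standard coupling between uniform rooted labelled trees with a fixed degree sequence and simply generated trees with a suitable (finite-support) weight sequence. Recall that if $\dseq=(d_1,\dots,d_n)$ with $\sum_i d_i=n-1$, then the plane-tree shape of $T_{\dseq}$, obtained by forgetting the labels and uniformly ordering the children at each node, has the same law as $T_{\wseq,n}$ for the weight sequence $w_k=1/k!$, $k\ge 0$ — indeed, the number of labellings of a given plane tree $t$ of size $n$ that produce degree sequence $\dseq$ (as a multiset, and more precisely that realise each prescribed $d_i$) is controlled by the multinomial factor $\prod_{v\in t}\rk_v(t)!$, so that after summing over labellings and orderings the probabilities are proportional to $\prod_{v\in t}\frac{1}{\rk_v(t)!}=\wseq(t)$. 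More carefully, I would set this up so that $T_{\dseq}$, after forgetting labels and choosing a uniform sibling order, is exactly $T_{\wseq,n}$ conditioned on having $\dseq$ as its (unordered) degree sequence; since relabelling and reordering do not change $\H$ or $\W$, we get $\W(T_{\dseq})\H(T_{\dseq})\eqdist \W(T_{\wseq,n})\H(T_{\wseq,n})$ under this conditioning.

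First I would make precise the bijective/enumeration identity: fix the plane tree $t$ with $\#t=n$; the number of ways to assign labels $[n]$ to its vertices equals $n!$, but among ordered labelled structures only those whose induced degree sequence matches $\dseq$ (up to the relabelling ambiguity among vertices with equal prescribed degree) should be counted, and the count is the same multinomial-type constant times $\prod_{v\in t}\rk_v(t)!$ — crucially a constant not depending on the shape $t$ beyond this product. Combined with the $\prod_v \rk_v(t)!$ orderings that flatten to the same labelled tree, this yields that each plane tree shape $t$ arises with weight proportional to $\wseq(t)$ with $w_k=1/k!$. The one subtlety is bookkeeping when $\dseq$ has repeated entries; I would handle this by working with the degree \emph{multiset} throughout and noting all the symmetry factors are shape-independent and hence cancel in the conditional law.

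Next, and this is where a small genuine argument is needed rather than pure bookkeeping, I must pass from the \emph{conditioned} simply generated tree $T_{\wseq,n}$ (conditioned to have a prescribed degree multiset) to the \emph{unconditioned} one to invoke Theorem~\ref{thm:sg}. The cleanest route is to observe that Theorem~\ref{thm:sg} is proved — as I would verify from its own proof once available — uniformly over \emph{all} weight sequences, and in particular one can choose the weight sequence adapted to $\dseq$ itself: take $w_k = a_k$ where $a_k$ is the number of $i\in[n]$ with $d_i=k$ (the ``profile'' of $\dseq$), or more simply run the argument of Theorem~\ref{thm:sg} on the set of plane trees with exactly that degree profile, which is a legitimate simply generated ensemble in its own right. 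Either way, the uniform tree with degree sequence $\dseq$ becomes a simply generated tree of size $n$ for a bona fide weight sequence, so the tail bound $\P(\W\H>sn\log n)\le 230\,s^{-2/13}$ applies verbatim.

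The main obstacle, modest but real, is that Theorem~\ref{thm:sg} as stated is for ordered (plane) trees while $T_{\dseq}$ is a labelled non-plane tree with a \emph{pointwise} prescribed degree at each label, not merely a prescribed profile; one has to check that conditioning a simply generated tree on its degree multiset, versus distributing a fixed multiset of degrees among labelled vertices, produces exactly the uniform measure on rooted labelled trees with degree sequence $\dseq$, with no residual shape-dependent Jacobian. Once that identification is in hand — it follows from the constancy of the symmetry factors discussed above — the height and width are coupling-invariant and the bound transfers with the identical constant $230$ and exponent $2/13$. I expect no new constants to appear, so the statement follows with the same numbers as Theorems~\ref{thm:bien} and~\ref{thm:sg}.
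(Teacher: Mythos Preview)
Your reduction to Theorem~\ref{thm:sg} has a genuine gap. You correctly observe that, after equipping $T_\dseq$ with a uniform sibling order, one obtains a uniform plane tree with type $\nseq=(n_i)_{i\ge 0}$, $n_i=\#\{j:d_j=i\}$, and that this coincides with any simply generated tree of size $n$ \emph{conditioned on having type $\nseq$}. The problem is the step where you pass back to an unconditioned simply generated tree. Your claim that the uniform law on $\cT(\nseq)$ ``is a legitimate simply generated ensemble in its own right'' is false in general: for any weight sequence $\wseq$, the measure $T_{\wseq,n}$ assigns mass proportional to $\prod_i w_i^{m_i}$ to every plane tree of type $(m_i)$, so every type supported on $\{i:w_i>0\}$ receives positive mass. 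For instance, with $\nseq=(3,1,0,1,0,\ldots)$ (size $5$) one needs $w_0,w_1>0$, but then the path type $(1,4,0,\ldots)$ also has positive weight and cannot be excluded. The alternative choice $w_k=n_k$ likewise neither concentrates on $\cT(\nseq)$ nor is uniform there. And Theorem~\ref{thm:sg} applied to a single $\wseq$ bounds only the \emph{mixture} over types, not each conditional law separately --- which is exactly the statement you would still need.

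The paper avoids this by reducing directly to Theorem~\ref{thm:nseq}, the statement for uniform plane trees with fixed type, which is in fact the common source from which Theorems~\ref{thm:bien}, \ref{thm:sg} and \ref{thm:dseq} are all deduced (so Theorem~\ref{thm:sg} is logically downstream of the result you need, not upstream). The coupling is precisely the one you sketch: start from $T_\nseq$, assign to its vertices of degree $i$ the labels $\{j:d_j=i\}$ uniformly at random, then forget the planar order; a short counting argument shows the resulting labelled rooted tree is distributed as $T_\dseq$, while height and width are unchanged throughout, giving $(\H(T_\dseq),\W(T_\dseq))\eqdist(\H(T_\nseq),\W(T_\nseq))$. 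Your instinct that no new constants appear is right, but the correct target is Theorem~\ref{thm:nseq}, not Theorem~\ref{thm:sg}.
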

\smallskip

Theorems~\ref{thm:bien},~\ref{thm:sg}~and ~\ref{thm:dseq} all follow from a result for uniform plane trees with a given \emph{type sequence}. Let $\nseq=(n_i)_{i\geq 0}$ be a sequence of natural numbers that satisfies that $\sum_{i\geq 0}n_i=1+\sum_{i\geq 0}in_i$. We say that a plane tree $t$ has type $\nseq$ if for all $i\geq 0$, $t$ contains exactly $n_i$ vertices with $i$ children. We call $\sum_{i\ge 0}n_i$ \emph{the size of $\nseq$}. Let $T_\nseq$ be a uniformly random plane tree with type $\nseq$.  We prove the following result. 

\begin{thm}\label{thm:nseq}
    For any $\nseq=(n_i)_{i\geq 0}$ with $\sum_{i\geq 0}n_i=1+\sum_{i\geq 0}in_i=n\geq 3$, for all $s>0$,
    \[\P(\W(T_{\nseq})\H(T_{\nseq})\geq sn\log n)\leq 230 s^{-2/13}.\]
\end{thm}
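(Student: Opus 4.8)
The plan is to reduce to understanding, for a given height level, how likely the tree is to have both a wide generation *and* be tall. The basic heuristic: if $T_\nseq$ has height $\approx h$ and width $\approx w$, then since $hw$ has a deterministic lower bound of $n-1$, the "bad" event $\W\H \ge sn\log n$ forces either the height to be much larger than $n/w$ for the realized width $w$, or the width to be much larger than $n/h$. So I would try to show that conditionally on the set of generation sizes (the "profile") $(L_0, L_1, \dots)$ of $T_\nseq$, the height is unlikely to be large unless the profile is already spread out, and simultaneously that a very spread-out profile is itself unlikely. Concretely, fix a threshold and split according to whether $\W(T_\nseq) \le n/(\lambda \log n)$ or not, for a well-chosen $\lambda = \lambda(s)$; in the first regime one needs $\H(T_\nseq) \ge \lambda s \log^2 n \cdot \text{(something)}$, and in the second regime the width is already atypically large.

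The key technical input I expect to need is a \emph{one-level estimate}: a bound, uniform in $\nseq$, on the probability that a fixed vertex at depth $k$ exists and that the subtree structure forces the generation at level $k$ to be large, or more usefully, a bound on $\p{L_k \ge w}$ for the $k$-th generation size $L_k$ of $T_\nseq$, together with a bound on $\p{\H(T_\nseq) \ge h}$. For uniform plane trees with a given type sequence there is a clean combinatorial/bijective handle: via the Łukasiewicz path (cyclic lemma / Dvoretzky--Motzkin), $T_\nseq$ corresponds to a uniformly random sequence of the prescribed increments arranged as a first-passage bridge, so generation sizes and heights translate into excursion-type statistics of this path. I would invoke the cycle lemma to get exact or near-exact formulas for $\p{L_k = \ell}$ — this should factor as a ratio of multinomial-type coefficients — and then bound $\H$ by a union bound over the event that *some* level has size $\ge 1$ beyond a given depth, controlling $\sum_k \p{L_k \ge 1}$, i.e., the expected height.

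The main obstacle, I expect, is obtaining the \emph{joint} control of height and width with constants independent of $\nseq$ and $n$: individually, $\E{\H(T_\nseq)}$ and $\E{\W(T_\nseq)}$ can each be as large as order $n$ (e.g. a path, or a "broom"), so neither a pure first-moment bound on $\H$ nor one on $\W$ suffices, and one cannot simply multiply tail bounds since $\H$ and $\W$ are strongly negatively correlated in exactly the regime that matters. The resolution I would aim for is a \emph{conditional} decoupling: condition on the profile $(L_k)_{k\ge 0}$; then $\W = \max_k L_k$ is determined, while $\H = \max\{k : L_k > 0\}$ is also determined — so actually both are functions of the profile, and the real task is a single tail bound on the law of the random profile of $T_\nseq$. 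So the argument reduces to: show that under the uniform-type measure, $\p{\big(\max_k L_k\big)\big(\max\{k: L_k>0\}\big) \ge sn\log n} \le 230 s^{-2/13}$. I would attack this by a dyadic decomposition over the possible value $w$ of $\W$ (powers of $2$), bounding for each dyadic block $\p{\W \in [w, 2w), \ \H \ge sn\log n/(2w)}$; the width constraint localizes us to profiles with a generation of size $\approx w$, which via the Łukasiewicz-path picture costs a factor that decays in $w$, while the height constraint forces $\approx sn\log n/w$ nonempty levels each contributing an independent-ish "survival" cost, and the product of these two competing bounds, optimized over the dyadic scale, should yield the stated polynomial-in-$s$ decay with the exponent $2/13$ and the explicit constant $230$ emerging from summing the geometric-type series over dyadic scales and over the height levels. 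The delicate point will be making the "each nonempty level costs a constant factor" statement rigorous and uniform, since conditioning on a large width perturbs the remaining path; I would handle this by a spinal/size-biasing decomposition of $T_\nseq$ along the ancestral line of the widest (or deepest) vertex, reducing to independent pieces whose type sequences are themselves sub-sequences of $\nseq$.
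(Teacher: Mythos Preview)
Your proposal contains a genuine gap at precisely the step you yourself flag as ``the delicate point.'' The heuristic that ``each nonempty level contributes an independent-ish survival cost'' is simply false for general type sequences: if $\nseq$ has $n_1$ close to $n$ (many degree-$1$ vertices), the tree is nearly a path and there is no probabilistic cost per level whatsoever---the height is essentially deterministic. More generally, the profile $(L_k)_k$ of $T_\nseq$ does \emph{not} behave like a subcritical branching process with a uniform per-level extinction rate; its law depends on $\nseq$ in a way that makes any ``survival cost'' argument non-uniform. Your reduction ``both $\H$ and $\W$ are functions of the profile'' is correct but tautological: it restates the problem as a tail bound on the profile law, which is exactly as hard as the original question. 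The dyadic split over $w$ then requires a bound on $\P(\W\in[w,2w),\,\H\ge sn\log n/(2w))$ that is uniform in $\nseq$, and nothing in your outline produces such a bound---the \L ukasiewicz/cycle-lemma formulas for $\P(L_k=\ell)$ are exact but give no grip on the \emph{joint} event that some level is wide \emph{and} many later levels are nonempty.

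The paper's proof takes a structurally different route that sidesteps the profile entirely. It replaces the pair $(\H,\W)$ by two proxies that \emph{are} jointly tractable along a spine: the second-order height $\H^{(2)}(T)=\max_{u,v}\min(|u|-|u\wedge v|,|v|-|u\wedge v|)$ in place of $\H$, and the maximal right-spinal weight $\max_u \rS_u^\rd(T)$ in place of $\W$. The replacement $\W\leadsto\max_u\rS_u^\rd$ is justified because both equal (up to constants) the range of an exchangeable bridge, hence have the same order (Proposition~\ref{width_vs_Luka}). The replacement $\H\leadsto\H^{(2)}$ requires a separate argument that the tree is not line-like (Proposition~\ref{prop:not_line}). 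The core estimate (Proposition~\ref{bound_product_right-weight-height}) then bounds, for each fixed spine endpoint $u$, the product of $\rS_u^\rd(T)$ and the maximal height among the subtrees hanging to the right of the spine: conditionally on the spine and on $\rS_u^\rd(T)=r$, those $r$ subtrees form a uniform forest whose height is \emph{stochastically dominated} by the extremal case where all components are paths (Proposition~\ref{prop:stoch_dom}), and the maximum of $r$ path-lengths summing to at most $n$ is $O((n/r)\log n)$ with high probability---this is the sole source of the $\log n$. Two symmetrization lemmas (Propositions~\ref{left-right_spine_weights} and \ref{left-right_spine_height}) remove the left/right asymmetry, and the five pieces are combined with three tuning parameters $\varepsilon_1,\varepsilon_2,\varepsilon_3$ whose optimization yields the exponent $2/13$. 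The spinal decomposition you gesture at in your last sentence is indeed the right object, but the missing ingredients---the stochastic-domination reduction to path-forests, the second-order-height comparison, and the bridge-range argument linking width to spinal weight---are each substantial and none is supplied by your outline.
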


Theorems \ref{thm:bien} and \ref{thm:sg} follow from \ref{thm:nseq} by averaging, using the observation that, given its type sequence, a conditioned Bienaymé tree (resp.~simply generated tree) is a uniformly random plane tree with this type sequence. Moreover, Theorem~\ref{thm:dseq} follows from Theorem~\ref{thm:nseq} by observing that if, given $T_\dseq$, we forget the labels and sample a uniformly random planar order, we obtain a tree distributed as $T_\nseq$ for $\nseq=(n_i)_{i\geq 0}$ with $n_i=\#\{j:d_j=i\}$. The details of the proofs of Theorems~\ref{thm:bien}, \ref{thm:sg} and \ref{thm:dseq} given Theorem~\ref{thm:nseq} can be found in Section~\ref{sec:all_thms}.

\subsection{Related work}

Let us first give an overview of some of the context of question~(\ref{question}). Let $\mu$ be an offspring distribution on $\N$, let $T_\mu$ be a $\mu$-Bienaymé tree and, if $\p{\#T_\mu=n}>0$, let $T_{\mu,n}$ be a $\mu$-Bienaymé tree conditioned to have size $n$.

When $\mu$ has mean $1$ and finite variance $\sigma^2$, the asymptotic geometry of $T_{\mu,n}$ is very well-understood. First, Kolchin~\cite[Theorem 2.4.3]{Kolchin} showed that $n^{-1/2}\H(T_{\mu,n})$ converges in distribution towards $2\sigma^{-1} M$, where $M$ is the maximum of a normalised Brownian excursion, which turned out to be a consequence of the scaling limit of the whole tree obtained by Aldous~\cite{Aldous93}. Then, Drmota \& Gittenberger~\cite{DrmGit97} proved that in the same setting, $n^{-1/2}\W(T_{\mu,n})$ converges in distribution towards $\sigma M$. It was later shown that these two convergences happen jointly~\cite{ChaMarYor,Jan06} so that $n^{-1}\H(T_{\mu,n})\W(T_{\mu,n})$ converges in distribution to $2M M'$, where $M'\eqdist M$ and the joint law of $(M,M')$ does not depend on $\mu$. Thus, the bigger the variance of the offspring distribution is, the shorter and wider the tree is, but these dependencies on $\sigma^2$ cancel when considering the product of the height and the width of the tree.

This trade-off between height and width of $T_{\mu,n}$ is also understood for some offspring distributions with infinite variance. Indeed, when $\mu$ has mean $1$ and is in the domain of attraction of a stable law of index $\alpha\in(1,2]$, Duquesne~\cite{Duq03} established a scaling limit for $T_{\mu,n}$ and found in particular a slowly varying function $L$ such that $L(n)n^{-1+1/\alpha}\H(T_{\mu,n})$ converges in distribution towards a positive random variable. Then, Kersting~\cite{kersting} proved that $L(n)^{-1}n^{-1/\alpha}\W(T_{\mu,n})$ also converges in distribution. Thus, the bigger $\alpha$, the shorter and wider the tree is, but again, asymptotically, these effects cancel so that $\H(T_{\mu,n})\W(T_{\mu,n})$ is of order $n$ in probability.

As discussed above, the order of the product of height and width can exceed $n$ for more general offspring distributions, but not by much. For example, when $\mu$ has mean $m_\mu<1$ and satisfies that $\mu(k)\sim ck^{-1-\alpha}$ with $c>0$ and $\alpha>1$, Jonsson \& Stefànsson~\cite{JonSte11} showed that the largest degree of $T_{\mu,n}$ is asymptotically equivalent to $(1-m_\mu)n$, an effect that is often called \emph{condensation}. This entails that $\W(T_{\mu,n})$ is of order $n$, whereas Kortchemski~\cite{Kor15_subcritical} proved that $\H(T_{\mu,n})/\log n$ converges in probability to a constant. Furthermore, Addario-Berry, the first author \& Kortchemski~\cite{critical_short_fat} gave asymptotic estimates for the height and the width of $T_{\mu,n}$ when $\mu$ has mean $1$ and $\mu(k)\sim \ell(k)/k^2$ with $\ell$ a slowly varying function, which allowed them to find other offspring distributions for which $\H(T_{\mu,n})\W(T_{\mu,n})$ is of order $n\log n$ in probability. The commonality in these examples is that the offspring distributions have very heavy tails. This causes the width to be of the same order as the largest degree, and the height of the tree is then realised in the tallest subtree pendant to the largest degree vertex. Taking the maximum over many roughly independent subtrees yields the additional $\log n$ factor. Keeping the structure of these trees in mind, it is difficult to imagine an offspring distribution that realises an asymptotically much larger product of the height and the width.

\medskip

Besides the limit theorems discussed above, some works established non-asymptotic tail bounds for the height and width of Bienaymé trees that match the ones of the limiting laws. The first example of such bounds that hold uniformly in $n$ is due to Flajolet, Gao, Odlyzko \& Richmond~\cite{FlaGaoOdlRic93} for uniformly random binary trees. In the general framework of Bienaymé trees, Devroye, Janson \& Addario-Berry~\cite{DevJanLAB13} provided sub-Gaussian tail bounds for $n^{-1/2}\H(T_{\mu,n})$ and $n^{-1/2}\W(T_{\mu,n})$ when the offspring distribution $\mu$ has mean $1$ and finite variance. Similarly, Kortchemski~\cite{Kor15} obtained uniform tail bounds for the height and width of critical Bienaymé trees whose offspring distributions are in the domain of attraction of an $\alpha$-stable law with $\alpha\in(1,2]$. See also Addario-Berry~\cite{LAB12} for bounds on height and width of uniform plane trees with given type sequence that are optimal in a regime corresponding to the finite variance assumption for Bienaymé trees.

The existence of non-asymptotic estimates suggests that convergence under rescaling may not be essential for characterizing the rough orders of magnitude of the height and width of random trees. This intuition has recently given rise to a line of work devoted to the search of strong uniform bounds without any regularity assumptions on the offspring distribution~\cite{short_fat,DonLAB24,BranHamKerLAB22}, in which the present paper fits. In~\cite{short_fat}, Addario-Berry focuses on unconditioned Bienaymé trees but paves the way for future work on conditioned trees by posing the question (\ref{question}). Addario-Berry, Brandenberger, Hamdam \& Kerriou~\cite{BranHamKerLAB22} first prove a tail bound for the height of a typical vertex in a uniform plane tree with fixed type and then deduce nearly tight bounds for the height and width on simply generated trees (thus including conditioned Bienaymé trees). Similar estimates are obtained by Marzouk~\cite{marzouk19} to study scaling limits of uniform plane trees and maps with fixed type. Addario-Berry \& the first author \cite{DonLAB24} provide new non-asymptotic tail bounds for the height of uniformly random plane trees, that correctly match the correct order of the height in many applications. A more precise but more unwieldy inequality is also obtained by Blanc-Renaudie~\cite{BlancR21} in the course of showing scaling limits for uniformly random plane trees with given type.

\subsection{Discussion and open problems}

It is striking that our theorems require no assumptions on the degree distribution at all. Intuitively, this is possible because of the trade-off between the height and width of the random trees that we consider. For example, Theorem 9 in \cite{DonLAB24} morally says that large degrees yield short trees. On the other hand, large degrees make it more likely that the width of a random tree is large. Our proofs exploit this idea. Basically, we use that disjoint subtrees need to share the total mass of the tree, which restricts their heights. Since a large width yields a large number of disjoint subtrees, it must yield a small global height.

This simple intuition is at the core of our relatively elementary proofs, that, unlike previous works that obtain assumption-free uniform bounds, do not require advanced analytical nor probabilistic tools. On the one hand, the papers~\cite{DevJanLAB13,short_fat,BranHamKerLAB22,marzouk19} conduct a fine analysis of spinal decompositions and conditioned random walks associated with random plane trees using strong concentration inequalities. On the other hand, the papers~\cite{DonLAB24,BlancR21} rely on a powerful stick-breaking construction of uniform trees with fixed type yielded by the so-called \emph{Foata--Fuchs bijection}~\cite{FoataFuchs,FoataFuchs_stick-break}, and then delicately inspect their probabilistic properties. While we also employ spinal decompositions and encodings of trees by walks, the control we need on them does not require more than the second moment method. Furthermore, we do not use the Foata--Fuchs bijection at all. Instead, we make extensive use of the exchangeability in the random trees by constructing various law-preserving transformations.
\smallskip

In return for being able to obtain the tight bounds $n\log n$ by elementary means, the tail bound given by Theorem~\ref{thm:nseq} is far from optimal. While the exponent $2/13$ is the best possible without changing any inequalities we used, any small improvement (using, for example, a third moment instead of a second) would increase it. Improving $2/13$ to a number exceeding $1$ would entail that the first moment of the product of the height and width of a tree of size $n$ is uniformly bounded on the scale $n\log n$ across all models for random trees that we consider. In fact, we believe that Theorem~\ref{thm:nseq} should still hold with a bound that decays (at least) exponentially. We leave such improvements for future work. 

In addition to the search of better tail bounds, we state some open questions naturally suggested by our work. Recall that $T_{\mu,n}$ stands for a $\mu$-Bienaymé tree conditioned to have size $n$ and $T_\mu$ stands for an unconditioned $\mu$-Bienaymé tree. 
\begin{enumerate}
    \item We believe that $S=\sup \E{\I{\# T_\mu<\infty}\H(T_\mu)\W(T_\mu)(\# T_\mu)^{-1}}$, where the supremum is taken among all probability measures $\mu$ on $\N$, is finite. We would further like to compute its value and to know whether it is achieved, and if so by what offspring distribution.
    \item We proved that the order of $\H(T_{\mu,n})\W(T_{\mu,n})$ is always between $n$ and $n\log n$. What are the conditions on the offspring distribution $\mu$ for being in one of the limit cases? Namely, when do we have $\H(T_{\mu,n})\W(T_{\mu,n})=\Theta(n)$ in probability and when do we have $\H(T_{\mu,n})\W(T_{\mu,n})=\Theta(n\log n)$ in probability?   
    \item For any slowly varying function $1\ll f(n)\ll\log(n))$, is there an offspring distribution $\mu$ such that $\H(T_{\mu,n})\W(T_{\mu,n})=\Theta(n f(n))$?
\end{enumerate}

\subsection{Structure of the paper}

The remainder of the paper is structured as follows. Section~\ref{sec:ptrees} presents definitions used throughout the paper, as well as the main propositions used in the proof of Theorem~\ref{thm:nseq} and how they imply the result. In this section, we also show how Theorems~\ref{thm:bien}, \ref{thm:sg}, and \ref{thm:dseq} follow from Theorem~\ref{thm:nseq}. Each of the remaining sections of the paper is devoted to proving one of the main ingredients of the proof of Theorem~\ref{thm:nseq}. Section~\ref{sec:H2vsH} rules out that a uniformly random tree with a given type looks like a line. Section~\ref{sec:coding} presents walks that encode plane trees and controls their ranges. Section~\ref{sec:height_spine} uses a stochastic domination argument to bound the heights of subtrees stemming from one side of a spine. Finally, Section~\ref{sec:symmetrization} introduces shuffling transformations and uses them to expand the results of Section~\ref{sec:height_spine} to uniform bounds across the whole tree, as long as the tree is not line-like.

\section{Framework and main ideas}\label{sec:ptrees}

\subsection{Words formalism for plane trees}

We denote by $\N^*=\{1,2,3,\ldots\}$ the set of positive integers and by $\U$ the set of finite words written with the alphabet $\N^*$. Namely,
\[\U=\bigcup_{\ell\geq 0}(\N^*)^\ell\quad\text{ with the convention }\quad(\N^*)^0=\{\varnothing\}.\]
As a set of words, $\U$ is totally ordered by the \emph{lexicographic order} $\leq$, so that $\varnothing<(1)<(1,2)<(2)$ for example. For two words $(u_1,\ldots,u_\ell)$ and $(v_1,\ldots,v_m)$, we write $u*v=(u_1,\ldots,u_\ell,v_1,\ldots,v_m)$ for their concatenation. We also denote by $|u|=\ell$ the length of $u$, with $|\varnothing|=0$, which we call \emph{the height of $u$}. Moreover, if $u\neq\varnothing$ then we call $\overleftarrow{u}:=(u_1,\ldots,u_{\ell-1})$ \emph{the parent of $u$}. If $u,v\in \U\setminus\{\varnothing\}$ have the same parent and if $u\leq v$, we say that $u$ is an \emph{older sibling} of $v$ and that $v$ is a \emph{younger sibling} of $u$.  We also define the \emph{genealogical order} $\preceq$ on $\U$, which is the partial ordering generated by the covering relation\footnote{For a partially ordered set $(\cP,\prec)$, $y\in \cP$ covers $x\in \cP$ if $x\prec y$ and for all $z\in \cP$, if $x\preceq z \preceq y$ then $z=x$ or $z=y$.} $u\in t$ covers $v\in t$ if and only if $v=\overleftarrow{u}$. Then $u\preceq v$ if and only if there is a $w\in \mathbb{U}$ so that $v=u*w$. If $u\preceq v$ we say that $u$ is an \emph{ancestor of $v$}. We also write $u \prec v$ to express that $u \preceq v$ and $u \ne v$.  For $u,v\in t$, the \emph{most recent common ancestor of $u$ and $v$}, denoted by  $u\wedge v$, is the maximal element in the genealogical order that is an ancestor of both $u$ and $v$. 

\begin{figure}
\centering
\includegraphics[page=1]{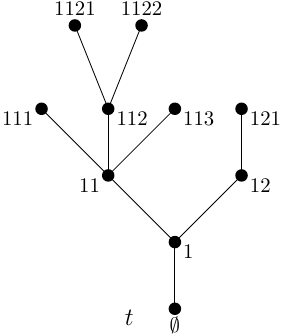}
\caption{A depiction of a plane tree $t$. The type of $t$ is $\nseq(t)=(5,2,2,1,0,0,\dots)$. The height of $t$ is $\H(t)=4$ (realised by vertices $1121$ and $1122$) and its width is $\W(t)=4$ (realised by the generation at distance $3$ from the root). The right spinal weight of vertex $112$ in $t$ is $\rS_{112}^{\rd}(t)=\#\{113,12\}=2$ and the left spinal weight of vertex $112$ is $\rS_{112}^{\rg}(t)=\#\{111\}=1$. Its spinal weight is $\rS_{112}(t)=\rS_{112}^{\rd}(t)+\rS_{112}^{\rg}(t)=3$. The second order height of $t$ is $\H^{(2)}(t)=2$, where the maximum of $\min\big(|u|-|u\wedge v|,|v|-|u\wedge v|\big)$ is realised when either $\{u,v\}=\{1121,121\}$ or $\{u,v\}=\{1122,121\}$. \label{fig:planetree}}
\end{figure}
\begin{definition}
\label{def:tree}
A \emph{plane tree} is a finite subset of $\U$ satisfying the following conditions:
\begin{compactenum}
    \item[$(a)$] $\varnothing\in t$,
    \item[$(b)$] for all $u\in t$, if $u\neq\varnothing$ then $\overleftarrow{u}\in t$,
    \item[$(c)$] for all $u\!\in\! t$, there is an integer $\rk_u(t)\!\geq\! 0$ such that $u\! *\! (i)\in t \!\Leftrightarrow\! i\in [\rk_u(t)]$ for all $i\! \in\! \N^*$.
\end{compactenum}
\end{definition}
\noindent
We let $\mathcal{T}$ denote the set of plane trees. Plane trees are our main object of study in this work, so we will often say \emph{tree} to mean \emph{plane tree}. As introduced earlier, a \emph{type sequence of size $n$} is a sequence $\nseq=(n_i)_{i\geq 0}$ such that $\sum_{i\geq 0}n_i=1+\sum_{i\geq 0}in_i$. The \emph{type} of a tree $t$ is $\nseq(t)=( n_i(t),i\ge 0)$ with $n_i(t)=\#\{u\in t:\rk_u(t)=i\}$. See Figure~\ref{fig:planetree}. We denote by $\cT(\nseq)$ the set of trees with type $\nseq$.

Let $t$ be a tree. We call $\varnothing$ the \emph{root} of $t$. For $u\in t$ we call $\{u\! *\! (i): i \in [\rk_u(t)]\}$ the \emph{children of $u$} in $t$. A \emph{leaf of $t$} is a vertex $u\in t$ such that $\rk_u(t)=0$. For $u\in t$, we let $\theta_u (t)=\{v:u*v\in t\}$ be the \emph{subtree rooted at $u$}. It is easy to check that $\theta_u t\in \cT$. The \emph{height} $\H(t)$ and the \emph{width} $\W(t)$ of a tree $t$ are defined by
\[\H(t)=\max_{u\in t}|u|\quad\text{ and }\quad \W(t)=\max_{h\geq 0}\#\{u\in t\, :\, |u|=h\}.\]
See Figure~\ref{fig:planetree}.  Observe that if $t$ has $n$ vertices and $ n_0$ leaves then
\begin{equation}
\label{basic_bound_H_W}
\H(t)\leq n-1\quad\text{ and }\quad\W(t)\leq  n_0.
\end{equation}
Indeed, first, the bound on $\H(t)$ readily follows from condition $(b)$ of Definition~\ref{def:tree} of (plane) trees. Second, for any $h\geq 0$, there is a leaf $v_u\in\theta_u t$ for all $u\in t$ with $|u|=h$. So, the vertices $u*v_u$ for $u\in t$ with $|u|=h$ are distinct leaves of $t$, which yields that $\#\{u\in t:|u|=h\}\leq  n_0$ for any $h\geq 0$.

\subsection{Spines in plane trees}
\label{sec:construction_around_spine}

Let $t$ be a tree endowed with a distinguished vertex $u\in t$. The ancestors of $u$ form a path from $u$ to the root of tree, which we call \emph{spine}, from which stem disjoint subtrees. This \emph{spinal decomposition} represents one of the main tool of this paper because it allows us to study the width and the height simultaneously. Basically, the vertices attached to the spine are not ancestrally related, just like the elements of a generation $\{v\in t:|v|=h\}$, and it turns out that maximising the size of this set over the different possible spines yields a number of the same order as the width of the tree. Moreover, unlike a generation, these vertices are still associated to a branch of the tree, whose maximum possible length is the height $\H(t)$.
\smallskip

To keep track of the number of trees attached to the spine but not stemming from $u$, we define the \emph{spinal weights of $(t,u)$} as
\begin{align*}
\rS_u(t)&=\#\{v\in t\setminus\{\varnothing\}\, :\, \overleftarrow{v}=u\wedge v\text{ and }\overleftarrow{v}\neq u\},\\
\rS_u^{\rd}(t)&=\#\{v\in t\setminus\{\varnothing\}\, :\, u<v,\overleftarrow{v}=u\wedge v\text{ and }\overleftarrow{v}\neq u\},\\
\rS_u^{\rg}(t)&=\rS_u(t)-\rS_u^{\rd}(t),
\end{align*}
so that $\rS_\varnothing(t)=\rS_\varnothing^{\rd}(t)=\rS_\varnothing^{\rg}(t)=0$. See Figure~\ref{fig:planetree}. Viewing $t$ as a family tree, $\rS_u$ counts the number of siblings of $u$ and its ancestors, $\rS_u^{\mathrm{d}}(t)$ counts the number of younger siblings of $u$ and its ancestors, and $\rS_u^{\mathrm{g}}$ counts the number of older siblings of $u$ and its ancestors. In other words, we have the identities
\begin{equation}
\label{spinal-weights_as_sums}
\rS_u(t)=\sum_{\substack{v\in\mathbb{U},j\geq 1\\ v*(j)\preceq u}}\big(\rk_v(t)-1\big),\quad \rS_u^{\rg}(t)=\sum_{\substack{v\in\mathbb{U},j\geq 1\\ v*(j)\preceq u}}(j-1),\quad \rS_u^{\rd}(t)=\sum_{\substack{v\in\mathbb{U},j\geq 1\\ v*(j)\preceq u}}\big(\rk_v(t)-j\big).
\end{equation}

We will also be interested in the maximum height of the pendant trees off the spine, in particular for $u$ that realise the height of the whole tree. We introduce the \emph{second-order height} of the tree $t$ as follows:
\[\H^{(2)}(t)=\max_{u,v\in t}\min\big(|u|-|u\wedge v|,|v|-|u\wedge v|\big).\]
See Figure~\ref{fig:planetree}. The following proposition asserts that $\H^{(2)}(t)$ is the maximum height of a subtree of $t$ pendant off a path from the root to a vertex with maximum height.  We will later show that in the classes of random trees that we consider, typically, the two characteristics $\H(t)$ and $\H^{(2)}(t)$ are of the same order, which will allow us to bound $\H(t)$ by controlling $\H^{(2)}(t)$ via spinal decompositions.

\begin{prop}
\label{second-height_alt}
Let $t$ be a tree and let $\ru\in t$. If $|\ru|=\H(t)$, then
\begin{align}
\label{second-height_alt1}
\H^{(2)}(t)&=\max_{v\in t}|v|-|\ru\wedge v|\\
\label{second-height_alt2}
&=\min_{u\in t}\max_{v\in t}|v|-|u\wedge v|.
\end{align}
\end{prop}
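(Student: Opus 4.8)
The plan is to establish the single chain of inequalities
\[
\H^{(2)}(t)\ \le\ \min_{u\in t}\max_{v\in t}\big(|v|-|u\wedge v|\big)\ \le\ \max_{v\in t}\big(|v|-|\ru\wedge v|\big)\ \le\ \H^{(2)}(t),
\]
valid whenever $|\ru|=\H(t)$. The middle inequality is immediate because $\ru$ is one admissible choice of $u$, so once the chain is proved all three quantities coincide, which is exactly \eqref{second-height_alt1} together with \eqref{second-height_alt2}. Hence it suffices to prove the first and last inequalities.

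The last inequality is the easy one and is where the hypothesis $|\ru|=\H(t)$ enters. Let $v^\star\in t$ attain $\max_{v\in t}\big(|v|-|\ru\wedge v|\big)$. Since $\ru$ realises the height, $|v^\star|\le|\ru|$, so $|v^\star|-|\ru\wedge v^\star|\le|\ru|-|\ru\wedge v^\star|$, and therefore $|v^\star|-|\ru\wedge v^\star|=\min\big(|\ru|-|\ru\wedge v^\star|,\,|v^\star|-|\ru\wedge v^\star|\big)$. By the very definition of $\H^{(2)}(t)$, applied to the pair $(\ru,v^\star)$, this minimum is at most $\H^{(2)}(t)$, giving $\max_{v\in t}\big(|v|-|\ru\wedge v|\big)\le\H^{(2)}(t)$.

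For the first inequality I would fix an arbitrary $u\in t$, pick a pair $a,b\in t$ realising $\H^{(2)}(t)$, and relabel if necessary so that $|b|-|a\wedge b|\le|a|-|a\wedge b|$, i.e.\ $\H^{(2)}(t)=|b|-|a\wedge b|$. The structural input is the \emph{tripod property} of trees: among the three common ancestors $u\wedge a$, $u\wedge b$, $a\wedge b$, two coincide and both are genealogical ancestors of the third. In the word formalism this takes a couple of lines: $u\wedge a$ and $u\wedge b$ are both ancestors of $u$, hence $\preceq$-comparable, say $u\wedge a\preceq u\wedge b$; then $u\wedge a\preceq u\wedge b\preceq b$ and $u\wedge a\preceq a$ force $u\wedge a\preceq a\wedge b$, after which comparing $u\wedge b$ and $a\wedge b$ (both ancestors of $b$) gives either $u\wedge a=u\wedge b$ or $u\wedge a=a\wedge b$. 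Granting this, a three-case inspection closes the argument: if $u\wedge a=u\wedge b\preceq a\wedge b$ take $v=b$ and use $|u\wedge b|\le|a\wedge b|$; if $u\wedge a=a\wedge b\ (\preceq u\wedge b)$ take $v=a$ and use $|a|-|a\wedge b|\ge|b|-|a\wedge b|$; if $u\wedge b=a\wedge b\ (\preceq u\wedge a)$ take $v=b$. In each case $|v|-|u\wedge v|\ge|b|-|a\wedge b|=\H^{(2)}(t)$, so $\max_{v\in t}\big(|v|-|u\wedge v|\big)\ge\H^{(2)}(t)$; since $u$ was arbitrary this is the first inequality.

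I do not expect a genuine obstacle: the proposition is essentially a bookkeeping identity about meets in a tree. The only points that need a little care are the relabelling reduction (so that $\H^{(2)}(t)$ is identified with the "$b$-side drop" $|b|-|a\wedge b|$ rather than the $a$-side one) and checking that each branch of the tripod dichotomy yields a $v$ recovering at least that drop; the stated tripod fact can be invoked as a symmetric black box, its own internal "WLOG" being harmless.
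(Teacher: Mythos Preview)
Your proof is correct and follows essentially the same approach as the paper: the same chain of inequalities, the same use of $|\ru|=\H(t)$ for the last inequality, and the same comparability-of-meets argument for the first. The only cosmetic difference is that the paper avoids your three-case tripod split by choosing the WLOG on $u\wedge a$ versus $u\wedge b$ directly (rather than first fixing which of $a,b$ realises the min), so that $u\wedge v\preceq v\wedge w$ falls out immediately and a single line finishes.
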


\begin{proof}
We have
\begin{equation}
\label{second-height_alt_step1}
\H^{(2)}(t)\geq \max_{v\in t}\min \big(|\ru|-|\ru\wedge v|,|v|-|\ru\wedge v|\big) = \max_{v\in t}|v|-|\ru\wedge v|,
\end{equation}
where the equality follows from $|\ru| \geq |v|$ for any $v\in t$.
Conversely, let $u\in t$ be any vertex. By definition of $\H^{(2)}(t)$, there are $v,w\in t$ such that $\H^{(2)}(t)+|v\wedge w|\leq |v|,|w|$. Both $u\wedge v$ and $u\wedge w$ are ancestors of the same vertex $u$ so we can assume that $u\wedge v\preceq u\wedge w$ without loss of generality. It then follows that $u\wedge v=u\wedge (v\wedge w)\preceq v\wedge w$ and so that $\H^{(2)}(t)\le |v|-|u\wedge v|$. We have thus proved that
\begin{equation}
\label{second-height_alt_step2}
\H^{(2)}(t)\leq \min_{u\in t}\max_{v\in t}|v|-|u\wedge v|.
\end{equation}
Since the right-hand side of (\ref{second-height_alt_step1}) is larger or equal to the right-hand side of (\ref{second-height_alt_step2}), we readily obtain the desired identities (\ref{second-height_alt1}) and (\ref{second-height_alt2}).
\end{proof}

\subsection{Overview of the proof of the main theorem}\label{sec:overview}

Here, we explain the global strategy of the proof of Theorem~\ref{thm:nseq} by presenting the key steps involved. We then show how these key ingredients, whose proofs are postponed to later in the paper, imply Theorem~\ref{thm:nseq}. Fix a type sequence $\nseq=( n_i)_{i\geq 0}$ of size $n\geq 3$, and let $T$ be uniformly distributed on $\cT(\nseq)$ so that $T$ is a uniformly random tree with type $\nseq$.
\smallskip

We first want to replace the height $\H(T)$ and the width $\W(T)$ in Theorem~\ref{thm:nseq} by the second order height $\H^{(2)}(T)$ and the maximal right spinal weight $\max_{u\in T}\rS^\rd_u(T)$ respectively. The latter quantities are both expressed in terms of subtrees pendant off spines and are therefore easier to track jointly. 

\smallskip
Thus, a first big step of the proof of Theorem~\ref{thm:nseq} is to compare the height with the second-order height. Informally, if $\H^{(2)}(t)$ is small compared to $\H(t)$, then $t$ looks like a line (with small trees branching off it). The following proposition asserts that this is unlikely to occur for a uniformly random tree with given type (provided it has many leaves).
\begin{prop}
\label{prop:not_line}
For all $\varepsilon >0$, if $\varepsilon^{1/3} n_0\geq 2$ then it holds that
\[\P\left(\varepsilon\H(T)\geq 1\, ;\, \varepsilon\H(T)> \H^{(2)}(T)\right)\leq 64\varepsilon^{1/3}.\]
\end{prop}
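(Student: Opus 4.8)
The plan is to show that if a uniformly random tree $T$ with type $\nseq$ has small second-order height compared to its height, then it must essentially be a long path with tiny bushes hanging off, and such trees are rare because there are very few of them relative to $\#\cT(\nseq)$. Concretely, suppose $\varepsilon\H(T)\ge 1$ and $\varepsilon\H(T)>\H^{(2)}(T)$; set $h=\H(T)$ and let $\ru\in T$ be a (say, lexicographically smallest) vertex with $|\ru|=h$. By Proposition~\ref{second-height_alt}, every $v\in T$ satisfies $|v|-|\ru\wedge v|\le\H^{(2)}(T)<\varepsilon h$. In other words, writing the spine from the root to $\ru$ as $\varnothing=\ru^{(0)}\prec\ru^{(1)}\prec\cdots\prec\ru^{(h)}=\ru$, every vertex of $T$ lies within distance $<\varepsilon h$ of the spine, and in particular the subtrees pendant off the spine all have height $<\varepsilon h$. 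Since the whole tree has $n$ vertices and $n_0$ leaves, and the spine has length $h$, one gets a lower bound $h\le n-1$, but more usefully: the total number of vertices off the spine is $n-h-1$, and these are organized into at most $\sum_{i}( \rk_{\ru^{(i)}}(T)-1)=\rS_\ru(T)$ pendant subtrees, each of height $<\varepsilon h$, hence each of size at most (say) $(\varepsilon h)$-th power growth — but without degree bounds the cleanest route is a direct counting/re-rooting argument rather than a size estimate.

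The main step I would carry out is a \emph{surgery / injection} argument exploiting exchangeability, in the spirit of the methods advertised in the introduction. Fix a set $A\subseteq\cT(\nseq)$ of "line-like" trees, meaning those $t$ for which there is a root-to-deepest-leaf spine such that every pendant subtree off it has height at most $\varepsilon\H(t)$ and $\varepsilon\H(t)\ge 1$; we want $\#A/\#\cT(\nseq)\le 64\varepsilon^{1/3}$. The idea is: given such a $t$, cut the spine at roughly $1/\varepsilon^{1/3}$ (or some chosen power) evenly spaced points, producing that many segments each of length $\approx\varepsilon^{1/3}h$ carrying their pendant bushes; now permute these segments cyclically (or perform a more elaborate reshuffle of the order in which the segments and bushes appear), and argue that (i) the resulting object is again a valid plane tree with the same type $\nseq$, so lands in $\cT(\nseq)$, and (ii) at least one such reshuffling produces a tree whose height drops by a constant factor, hence lies \emph{outside} $A$, and (iii) the map is sufficiently injective (bounded-to-one) that it witnesses $\#A\le C\varepsilon^{\alpha}\#\cT(\nseq)$ with the right exponent. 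Getting the exponent exactly $1/3$ and the constant $64$ will require choosing the number of pieces as a function of $\varepsilon$ and being careful about how many segments might themselves be long; one accounts for the "bad" event that a single pendant subtree is itself long by a union bound over the $\le 1/\varepsilon$ choices of spine position, which is presumably where a factor like $\varepsilon^{1/3}$ (rather than $\varepsilon$) and the cube-root in the hypothesis $\varepsilon^{1/3}n_0\ge 2$ enter.

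An alternative — and perhaps what the authors actually do — is a second-moment computation: let $N$ count the number of leaves $v\in T$ with $|v|-|\ru\wedge v|\ge\varepsilon h$ for a suitable reference vertex, or more symmetrically count pairs $(u,v)$ with $\min(|u|-|u\wedge v|,|v|-|u\wedge v|)$ large; show $\E{N}$ is not too small and $\E{N^2}\le C(\E{N})^2$ using the exchangeability of $T$ (the distribution of the tree is invariant under reordering children, so the pendant subtrees off a spine are exchangeable and one can compute joint moments of their heights via the uniform-over-type structure), and conclude by Paley–Zygmund that $N\ge 1$ with probability $\ge 1-64\varepsilon^{1/3}$ on the relevant event. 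I would try this route first since the paper explicitly says "the control we need does not require more than the second moment method."

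The hard part, in either approach, will be obtaining a \emph{uniform} lower bound on the probability that some pendant subtree off \emph{every} long spine is itself tall — equivalently, ruling out the line-like configuration — without any regularity assumption on $\nseq$. The difficulty is that a worst-case type sequence could force many leaves (so $n_0$ large, the hypothesis) yet still conceivably concentrate on nearly-linear trees; the argument must use that, even then, re-shuffling the order of the $n-h-1$ off-spine vertices among $h$ available slots generates exponentially (in $h$) many distinct trees of smaller height, overwhelming the line-like ones. Pinning down that the gain is exactly a power $\varepsilon^{1/3}$ and that all the constants multiply to at most $64$ is the delicate bookkeeping I would expect to occupy most of the proof.
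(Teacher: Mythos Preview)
Your proposal is a plan rather than a proof, and neither of the two routes you sketch is developed to the point where one can assess correctness; more importantly, both miss the actual mechanism the paper uses.

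The paper's argument has two distinct ingredients, neither of which appears in your sketch. First, a preliminary lemma (Lemma~\ref{lem:half_leaves_low}) controls the \emph{location} of leaves along the spine: defining $\rL_\lambda(t)$ as the number of leaves whose most recent common ancestor with $\ru(t)$ lies in the top $\lambda$-fraction of the spine, the lemma shows $\P(\rL_\lambda(T)>\delta n_0)\le 12\lambda/\delta^2$. The proof is not a second-moment computation but a \emph{cyclic shift} of the spine of a uniformly random leaf $V$: one replaces $\ru(T)$ by $V$ (paying a factor $\delta^{-2}$ via two uniform leaves $V,V'$), then cyclically rotates the ancestral line of $V$ by a uniform amount so that $|V\wedge V'|$ becomes approximately uniform on $\{0,\ldots,|V|\}$; the event that it lands in an interval of length $\lambda|V|$ then has probability $O(\lambda)$. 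Second, the main surgery is a \emph{single} cut-and-graft, not a multi-piece permutation: one removes the subtree rooted at the spine vertex at height $\approx(1-\varepsilon)h$ and attaches it at a leaf $v$ with $|v\wedge\ru(t)|<(1-5\varepsilon)h$. The preliminary lemma guarantees $\gtrsim(1-\varepsilon^{1/3})n_0$ such leaves are available, while the inverse image of any output tree $\tilde t$ has size $\le 2\rL_{5\varepsilon}(t)\le 2\varepsilon^{1/3}n_0$, because in $\tilde t$ the moved piece is identifiable as the branch realizing $\H^{(2)}(\tilde t)$ and its old position must be one of the few high leaves. The ratio $2\varepsilon^{1/3}n_0/n_0$ gives the bound.

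Your ``cut into $\varepsilon^{-1/3}$ pieces and permute'' idea would require controlling an $\varepsilon^{-1/3}$-to-one map and arguing that most permutations leave $A$, which is substantially harder to make precise; your Paley--Zygmund alternative would need first and second moments of a count of tall pendant subtrees, but there is no obvious way to compute these uniformly over all type sequences (the pendant subtrees off a spine are \emph{not} i.i.d., and their joint law depends intricately on $\nseq$). The paper's route sidesteps both issues by working with leaves rather than subtree heights and by using a single, explicitly invertible surgery.
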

To obtain this result, we construct and study several type-preserving maps that swap, cut, and re-graft some subtrees of an input tree. Thanks to these transformations, we can associate each line-like tree to many different fork-like trees, for which the second-order height is similar to the height. The proof of Proposition~\ref{prop:not_line} can be found in Section~\ref{sec:H2vsH}.
\smallskip

A next step of the proof of Theorem~\ref{bound_product_right-weight-height} is a comparison of the maximum right-spinal weight with the width, which is the content of the following result.
\begin{prop}
\label{width_vs_Luka}
For all $\varepsilon>0$, if $\varepsilon^{4/3}\sqrt{ n_0-1}\geq 2^6$ then 
\[\P\big(\varepsilon\W(T)\geq \max_{u\in T} \rS_u^{\rd}(T)\big)\leq \sqrt[3]{2^{22}}\, \varepsilon^{2/3}.\]
\end{prop}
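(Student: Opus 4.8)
The plan is to re-express both $\max_u\rS_u^{\rd}(T)$ and $\W(T)$ through Łukasiewicz-type encodings and then reduce the statement to a two-sided tail bound for the range of such a walk. List the vertices of $T$ in lexicographic (depth-first) order as $v_0\prec\dots\prec v_{n-1}$ and set $W_0=0$, $W_{k+1}=W_k+\rk_{v_k}(T)-1$. Distinguishing the cases ``$v_k$ is internal'' (so $v_{k+1}=v_k*(1)$) and ``$v_k$ is a leaf'' (so $v_{k+1}$ is a younger sibling of some ancestor of $v_k$, reached after a run of last children) and using the identities \eqref{spinal-weights_as_sums}, a short induction on $k$ gives $W_k=\rS_{v_k}^{\rd}(T)$ for $0\le k\le n-1$; since $W_n=-1$ this is exactly $\max_u\rS_u^{\rd}(T)=\max_{0\le k\le n}W_k=:M$. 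As a byproduct I get the deterministic bound $M\le n_0-1$: for any $u$, the $\rS_u(T)$ subtrees pendant from the siblings of $u$ and of its strict ancestors, together with $\theta_u T$, are pairwise disjoint and each contains at least one leaf. The same reasoning applied to a generation gives $\W(T)\le n_0$.

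Next I encode the width by the breadth-first analogue $\bar W$ of $W$ (same recursion, but scanning $T$ breadth-first). If $N_h$ denotes the size of generation $h$ and $a_h=\sum_{l<h}N_l$, then for $a_h\le k=a_h+m\le a_{h+1}$ one checks that $\bar W_k=N_h-1+c_m-m$, where $c_m\in[0,N_{h+1}]$ counts the children of the first $m$ vertices of generation $h$; in particular $\bar W_{a_h}=N_h-1$ and $\bar W_k\le N_h+N_{h+1}-1$, so
\[\W(T)-1\ \le\ \bar M\ \le\ 2\,\W(T),\qquad\text{where }\bar M:=\max_{0\le k\le n}\bar W_k.\]
The key exchangeability input is that $\bar W$ is itself a Łukasiewicz walk---the computation above shows $\bar W_k\ge 0$ for $k\le n-1$ and $\bar W_n=-1$---namely the walk of the plane tree obtained from $T$ by reading its breadth-first degree sequence as a depth-first one. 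This map is a bijection of $\cT(\nseq)$ onto itself (a tree is recovered from its breadth-first degree sequence exactly as from its depth-first one), so it preserves the uniform law, and therefore $\bar W\eqdist W$; in particular $\bar M\eqdist M$.

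Putting the pieces together, since $\W(T)\le\bar M+1$, for every threshold $\lambda>0$,
\[\P\bigl(\varepsilon\W(T)\ge\max_u\rS_u^{\rd}(T)\bigr)\ \le\ \P\bigl(\varepsilon(\bar M+1)\ge M\bigr)\ \le\ \P(\bar M\ge\lambda)+\P\bigl(M\le\varepsilon(\lambda+1)\bigr)\ =\ \P(M\ge\lambda)+\P\bigl(M\le\varepsilon(\lambda+1)\bigr).\]
It then remains to control the two tails of $M$ for a uniform tree of type $\nseq$ and to optimise $\lambda$. The upper tail is the easy direction: together with the deterministic cap $M\le n_0-1$, a second-moment estimate controlling $\E{M^2}$ by the quantity $\sum_{i\ge0}(i-1)^2 n_i$ that governs the fluctuations of a walk with this multiset of increments yields a bound of the form $\P(M\ge\lambda)\lesssim \E{M^2}/\lambda^2$ which is useful on the relevant range. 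The lower tail---some bound of the shape $\P(M\le s)\lesssim \bigl(s/\sqrt{n_0-1}\bigr)^{p}$ expressing that the Łukasiewicz walk of a uniform tree must climb reasonably high---is the delicate input, and is where both the universal scale $\sqrt{n_0-1}$ (a lower bound for the scale of $M$ valid for every $\nseq$) and the hypothesis $\varepsilon^{4/3}\sqrt{n_0-1}\ge 2^6$ enter: the latter guarantees that the balancing choice of $\lambda$ is admissible and makes both terms $O(\varepsilon^{2/3})$, the explicit constant $\sqrt[3]{2^{22}}$ coming out of tracking the constants in the two estimates and in the reduction.

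The main obstacle is precisely this lower tail $\P(M\le s)$. A uniform tree of given type can a priori be quite balanced---for a perfectly balanced binary tree $M$ is only of order $\log n_0$---so one must quantitatively rule this out using no more than the second moment method, which is subtle because a second moment applied at a single time merely controls the fluctuations of $M$ about its mean and says nothing about how small $M$ can become. I expect the right move is to examine $W$ (equivalently, via the cycle lemma, a walk with exchangeable increments) at many well-separated times simultaneously, note that the coarse-grained walk they trace has exchangeable increments of typical size $\sqrt{n_0-1}$, and bound the probability that it stays confined to a narrow band, combined with the mirror symmetry of $\cT(\nseq)$ that exchanges $\rS^{\rd}$ and $\rS^{\rg}$---all while keeping every constant explicit.
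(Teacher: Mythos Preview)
Your reduction is correct and matches the paper's: $\max_u\rS_u^{\rd}(T)=\max X^{\dfs}(T)$, $\W(T)\le 1+\max X^{\bfs}(T)$, the two walks have the same law on $\cT(\nseq)$, and the problem becomes a two-sided tail bound on their common maximum at a threshold to be optimised. But the argument stops precisely where the content lies. For the upper tail you assert that ``a second-moment estimate controlling $\E{M^2}$'' by $\sigma^2=\sum_{i\ge 0}(i-1)^2n_i$ suffices; this is a maximal inequality for a walk with exchangeable but non-independent increments, and you give no proof of it (the deterministic cap $M\le n_0-1$ does not help here). For the lower tail you explicitly label it ``the main obstacle'' and only speculate on a method (``I expect the right move is\ldots''). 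So what you have submitted is a correct outline, not a proof.

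The paper completes both steps by passing via the Vervaat transform to an exchangeable bridge $Y$ whose range equals $1+\max X^{\dfs}(T)$. The upper tail $\P(R(Y)\ge\sigma/\varepsilon)\le 12\varepsilon$ comes from an Etemadi-type argument using only the explicit formula $\E{Y_m^2}=\tfrac{m(n-m)}{n(n-1)}\sigma^2$. The lower tail is obtained exactly along the lines you conjecture---sampling the bridge on a mesh of $p$ equispaced times---but made precise: one sets $Q=\sum_{i=1}^p(Y_{im}-Y_{(i-1)m})^2$, computes $\E{Q}\ge\tfrac12\sigma^2$ and $\V{Q}\le 21\sigma^4/p$ from explicit second- and fourth-moment identities for the bridge, and Chebyshev gives $\P(2R(Y)\le\sigma/\sqrt p)\le 400/p$. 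The balancing choice $r\asymp\sigma\varepsilon^{1/3}$ then yields the stated $\varepsilon^{2/3}$ bound with the constant $\sqrt[3]{2^{22}}$; the hypothesis $\varepsilon^{4/3}\sqrt{n_0-1}\ge 2^6$ is what makes this $r$ admissible, via $\sigma^2\ge n_0-1$. The mirror symmetry of $\cT(\nseq)$ you mention plays no role in this argument.
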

The proof of Proposition~\ref{width_vs_Luka} relies on two standard codings of plane trees by discrete walks, derived from the breadth-first search and the depth-first search. We use that their maxima are closely related to $\W(T)$ and $\max_{u\in T}\rS_u^{\rd}(T)$ respectively. For the uniformly random tree $T$ with fixed type, these two walks have the same distribution and can be obtained from an exchangeable bridge via the Vervaat transform. We use the method of moments to study the lower and upper tails of the range of such an exchangeable bridge, and find that it has roughly deterministic order, so that both $\W(T)$ and $\rS_u^{\rd}(T)$ are of this order too. This then implies Proposition~\ref{width_vs_Luka}. The proof is presented in Section~\ref{sec:coding}. 
\smallskip

The next key building block of the proof of Theorem~\ref{thm:nseq} is the following proposition that uniformly relates, for any choice of spine, the number of subtrees stemming from the right of the spine (i.e.~the right-spinal weight) to their maximum height.
\begin{prop}
\label{bound_product_right-weight-height}
For all $s>0$, it holds that 
\[\P\left(\max_{\substack{u,v\in T\\ u\leq v}}(|v|-|u\wedge v|)\rS_u^{\rd}(T)\geq sn\log n\right)\leq 3n^{2-s/2}.\]
\end{prop}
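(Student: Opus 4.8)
\emph{The plan is to reduce to a first-moment estimate over all possible spines of $T$, the key input being a tail bound for the height of a uniform plane tree of prescribed type, obtained by a cycle-lemma count.}

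\textbf{Reduction.} We may assume $s>2$, since otherwise $3n^{2-s/2}\geq 3n\geq 1$. Fix $u\in T$ with $\rS_u^{\rd}(T)=k\geq 1$, let $R_1,\dots,R_k$ be the subtrees of $T$ hanging to the right of the spine from the root to $u$, and set $R_0=\theta_uT$. Every $v\geq u$ in $T$ lies in one of $R_0,\dots,R_k$, and there $|v|-|u\wedge v|$ equals the depth of $v$ in that subtree, plus $0$ or $1$; hence $|v|-|u\wedge v|\leq 1+\H(R_j)$ for some $j$. Consequently the event of the statement is contained in
\[\bigcup_{u\in T,\ \rS_u^{\rd}(T)\geq 1}\ \bigcup_{j=0}^{\rS_u^{\rd}(T)}\Bigl\{\bigl(1+\H(R_j)\bigr)\,\rS_u^{\rd}(T)\geq sn\log n\Bigr\},\]
and, since $\rS_u^{\rd}(T)\leq n-1$ and $s>2$, on this event some $R_j$ has height at least $sn\log n/\bigl(2\rS_u^{\rd}(T)\bigr)$.

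\textbf{Fixing a spine.} Fix a word $\mathsf u\in\U$, condition on $\{\mathsf u\in T\}$ and on the \emph{backbone} of $T$ towards $\mathsf u$ (the out-degrees of the ancestors of $\mathsf u$ and the branching directions), and write $k:=\rS_{\mathsf u}^{\rd}(T)$, $h:=\lceil sn\log n/(2k)\rceil$. The goal is a bound $\E{\#\{0\leq j\leq k:\H(R_j)\geq h\}\mid\text{backbone}}\leq Cn^{1-s/2}$, valid for every backbone. Three ingredients go into it. (i) A conservation estimate: the out-degrees along the spine already sum to more than $k$, so the $R_j$ together contain at most $n-1-k$ vertices of positive out-degree; as a tree of height $\geq h$ has at least $h$ such vertices, at most $(n-1-k)/h\leq n/(2s\log n)$ of the $R_j$ can be that tall. (ii) Conditionally on their sizes and types the $R_j$ are independent uniform plane trees, and a uniform plane tree of type $\mathsf m$ and size $m$ has, by unfolding its spine to depth $h$ and counting the resulting decompositions (a spine reaching depth $h$, whose first $h$ vertices have positive out-degree, and $2h+1$ ordered pendant forests carrying the rest of $\mathsf m$) via the cycle lemma, a height tail that is an explicit expression in falling factorials of $\mathsf m$ — vanishing unless $\mathsf m$ has $\geq h$ internal vertices, essentially Gaussian, $\asymp e^{-ch^2/m}$, and smaller still when $\mathsf m$ concentrates on a few large out-degrees. (iii) When $k$ is large the sizes of the $R_j$ are strongly concentrated near small values (few of them can have any fixed moderate size, since their total size is at most $n$), so the event that some $R_j$ is simultaneously large enough and tall is rare. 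Combining (i)--(iii), grouping the $R_j$ dyadically by size, and averaging over sizes and types gives the displayed bound, with the factor $n^{1-s/2}$ appearing because the threshold $h\geq sn\log n/(2k)$ turns $e^{-ch^2/m}$ (with $m\leq n$) into a power $n^{-cs}$ — the $\log n$ in $sn\log n$ being exactly what is needed.

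\textbf{Union over spines, and the main obstacle.} Removing the conditioning and summing over words, $\P(\text{event})\leq\sum_{\mathsf u\in\U}\P(\mathsf u\in T)\,\E{\#\{0\leq j\leq \rS_{\mathsf u}^{\rd}(T):\H(R_j)\geq h_{\mathsf u}\}\mid\mathsf u\in T}$, which by the previous step is at most $Cn^{1-s/2}$ times $\sum_{\mathsf u\in\U}\P(\mathsf u\in T)=\E{\#T}=n$; tracking the constants yields $3n^{2-s/2}$. The main obstacle is precisely the height tail bound for uniform plane trees of \emph{arbitrary} prescribed type: bounding the cycle-lemma expression crudely loses too much exactly when the type has a few very large out-degrees (so that a spine with many pendant subtrees is plausible), and one must retain enough of the falling-factorial structure — this is the stochastic-domination step, comparing the heights of the right-pendant subtrees to those of a single auxiliary uniform plane tree built from the relevant out-degrees — to see that, in that regime, tall pendant subtrees (and moderate-size subtrees more generally) are correspondingly rare.
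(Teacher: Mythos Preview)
Your reduction and union-over-spines framework is fine and matches the paper in spirit, but the core step~(ii) --- the height tail bound for a uniform plane tree of arbitrary fixed type --- is where the argument breaks. The claim that this tail is ``essentially Gaussian, $\asymp e^{-ch^2/m}$'' is simply false without restrictions on the type: if $\mathsf m$ is the line type (one leaf, all other vertices of out-degree~$1$), then the tree is deterministically a path and $\P(\H\geq h)=\I{m-1\geq h}$, which is~$1$ for all $h<m$. Your ingredient~(i) only says that at most $n/h$ of the $R_j$ can be tall, which is not a small probability but a count, and ingredient~(iii) controls sizes, not the type-shape that actually governs the height. So after conditioning on the backbone you still have no usable bound on $\E{\#\{j:\H(R_j)\geq h\}}$ in the regime where the remaining type is dominated by degree-$1$ vertices. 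You also misdiagnose the obstacle at the end: the hard case is not ``a few very large out-degrees'' (large degrees make trees \emph{short}) but the opposite extreme, where all non-leaf degrees are~$1$.

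The paper handles this with a genuinely different decomposition. Rather than treating the $R_j$ separately, it views them together as a single tree $T^{(k)}$ with root degree $r=X_k^{\dfs}(T)+1$ (equivalently a forest of $r$ trees), whose type is determined by the first $k$ increments of the depth-first walk. The key input is then a stochastic-domination theorem: among all types of size $\leq n+1$ with a vertex of degree~$r$, the one that maximises the conditional law of $\H$ (given root degree~$r$) is the type $\nseq^*$ consisting of one degree-$r$ vertex, $r$ leaves, and the rest degree~$1$. For $T^*$ of type $\nseq^*$ the pendant subtrees are paths whose lengths form a uniform weak composition of $n-r$ into $r$ parts, and a direct stars-and-bars count gives $\P(r\H(T^*)>sn\log n)\leq 3n^{1-s/2}$. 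This is the ``stochastic-domination step'' you allude to, but it is the whole argument, not a patch on a Gaussian estimate, and it requires its own proof (moving a child from a high-degree vertex to a low-degree one can only increase the height stochastically).
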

To show Proposition~\ref{bound_product_right-weight-height}, we adapt an argument of Addario-Berry \& the first author \cite{DonLAB24}. We show that among all type sequences of \emph{forests} with a fixed number of vertices and components, the type sequence that yields the stochastically tallest uniform forest contains no vertices with more than $1$ child. The components of this random forest are paths, and their maximum height is distributed as the largest part of a uniformly random composition of a given integer. This yields the factor $\log n$, which in fact exclusively stems from Proposition~\ref{bound_product_right-weight-height}. We show Proposition~\ref{bound_product_right-weight-height} in Section~\ref{sec:height_spine}.
\smallskip

Our proof of Proposition~\ref{bound_product_right-weight-height} also relies on an exploration of the tree from left to right that introduces an artificial asymmetry within the statement. Indeed, the planar order of a uniformly random tree with a given type is itself uniform. We overcome the asymmetry in two stages. First, we justify that we can replace $\rS_u^{\rd}(T)$ with $\rS_u(T)$. Informally, the vertices that attach to the spine of node $u$ are all equally likely to attach to the left or right, so if $\rS_u(T)$ is large then the left and right spinal weights are likely to be of the same order. This is the content of the following result. 
\begin{prop}
\label{left-right_spine_weights}
For all $\varepsilon >0$, it holds that
\[\P\Big(\max_{u\in T}\rS_u(T)\geq \varepsilon^{-1}\, ;\, \max_{u\in T}\rS_u(T)\geq\varepsilon^{-1}\max_{u\in T}\min\big(\rS_u^{\rd}(T),\rS_u^{\rg}(T)\big)\Big)\leq 2\sqrt{8\varepsilon}.\]
\end{prop}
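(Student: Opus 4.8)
The plan is to reduce the statement to a one-vertex estimate. Write $M:=\max_{u\in T}\rS_u(T)$ and $N:=\max_{u\in T}\min\big(\rS_u^\rd(T),\rS_u^\rg(T)\big)$, and fix once and for all a measurable choice of a vertex $u^\star=u^\star(T)$ with $\rS_{u^\star}(T)=M$. Since $N\geq \min\big(\rS_{u^\star}^\rd(T),\rS_{u^\star}^\rg(T)\big)$, the event whose probability we must bound is contained in
\[
\big\{M\geq \varepsilon^{-1}\big\}\cap\big\{\min(\rS_{u^\star}^\rd(T),\rS_{u^\star}^\rg(T))\leq \varepsilon M\big\}.
\]
Let $\delta_1,\dots,\delta_h$ be the out-degrees of the ancestors of $u^\star$ read from the root, with $h=|u^\star|$ and the level-$(k-1)$ ancestor having out-degree $\delta_k$; by \eqref{spinal-weights_as_sums} these satisfy $\sum_{k=1}^h(\delta_k-1)=M$, and together with $M$ they depend only on the \emph{unordered} shape underlying $T$. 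The structural input I would establish, using the law-preserving shuffling maps developed in Section~\ref{sec:symmetrization}, is that conditionally on this shape (and on the location of $u^\star$ in it) one may couple $T$ so that the positions $J_1,\dots,J_h$ among siblings of the spine-vertices along the ancestral line of $u^\star$ are \emph{independent}, with $J_k$ uniform on $[\delta_k]$. Granting this, $\rS_{u^\star}^\rg(T)=\sum_{k=1}^h(J_k-1)$ is, conditionally, a sum of independent variables of respective means $\tfrac{\delta_k-1}{2}$, so it has conditional mean $M/2$ and a conditional law symmetric about $M/2$, while $\rS_{u^\star}^\rd(T)=M-\rS_{u^\star}^\rg(T)$.

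It then remains to bound, conditionally on the shape and on the event $\{M\geq\varepsilon^{-1}\}$,
\[
\p{\min(\rS_{u^\star}^\rd(T),\rS_{u^\star}^\rg(T))\leq\varepsilon M}=2\,\p{\rS_{u^\star}^\rg(T)\leq\varepsilon M},
\]
where the equality uses symmetry about $M/2$ (and $\varepsilon<\tfrac12$). We may assume $\varepsilon<\tfrac1{32}$, since otherwise $2\sqrt{8\varepsilon}\geq1$ and the claim is trivial. Fix a threshold $\alpha=\tfrac{\sqrt3}{2}\sqrt{\varepsilon}$ and split according to $\Delta:=\max_k\delta_k$. If $\Delta-1\geq\alpha M$, pick $k_0$ with $\delta_{k_0}=\Delta$ and write $\rS_{u^\star}^\rg(T)=(J_{k_0}-1)+Y$ with $Y\geq0$ independent of $J_{k_0}$; conditioning on $Y$, the variable $J_{k_0}-1$ is uniform on a set of $\Delta$ consecutive integers, so it is $\leq\varepsilon M-Y$ with probability at most $(\varepsilon M+1)/\Delta\leq(\varepsilon M+1)/(\alpha M)\leq 2\varepsilon/\alpha$, using $M\geq\varepsilon^{-1}$ for the last step, and hence $\p{\rS_{u^\star}^\rg(T)\leq\varepsilon M}\leq 2\varepsilon/\alpha=\tfrac{4}{\sqrt3}\sqrt\varepsilon$. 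If instead $\Delta-1<\alpha M$, then $\mathrm{Var}\big(\rS_{u^\star}^\rg(T)\big)=\sum_k\tfrac{\delta_k^2-1}{12}\leq\tfrac{(\Delta+1)M}{12}\leq\tfrac{\alpha M^2}{6}$ (the last inequality holds because $M\geq\varepsilon^{-1}$ forces $\alpha M>2$, so $\Delta+1<\alpha M+2<2\alpha M$), and Chebyshev's inequality gives $\p{\rS_{u^\star}^\rg(T)\leq\varepsilon M}\leq\p{|\rS_{u^\star}^\rg(T)-M/2|\geq(\tfrac12-\varepsilon)M}\leq\tfrac{\alpha M^2/6}{(\tfrac12-\varepsilon)^2M^2}\leq\tfrac{4}{\sqrt3}\sqrt\varepsilon$. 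In both cases the conditional probability displayed above is therefore at most $\tfrac{8}{\sqrt3}\sqrt\varepsilon<2\sqrt{8\varepsilon}$. Averaging this conditional bound over the shape on the (shape-measurable) event $\{M\geq\varepsilon^{-1}\}$ yields the proposition.

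The step I expect to be the main obstacle is the structural input from the first paragraph: turning the informal principle that the planar order of a uniform plane tree with fixed type is itself uniform into the precise statement that, conditionally on the unordered tree and the location of $u^\star$, the spine positions $(J_k)$ are independent and uniform. The delicate point is that $u^\star$ is selected from $T$ itself, so re-randomizing the child-orders along its ancestral line must be shown to preserve the law of $T$; this is subtle when $T$ has isomorphic pendant subtrees, which make even the phrase ``ancestral line of $u^\star$'' ambiguous as a sequence of vertices. This is exactly what the law-preserving shuffles of Section~\ref{sec:symmetrization} are designed to handle: one passes to the unordered shape, observes that cyclically rotating the children of each spine-vertex by independent uniform amounts is a measure-preserving bijection of the (uniformly distributed) planar embedding of that shape, and reads off the conditional law of $\rS_{u^\star}^\rg(T)$ from the resulting independent uniform positions.
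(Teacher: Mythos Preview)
Your reduction to a single maximizing vertex and the case split on the largest spine-degree (direct bound versus Chebyshev) are exactly what the paper does, and your constants are fine. The gap is precisely where you flagged it, and your sketched resolution does not close it. The claim that, conditionally on the unordered shape and on the location of $u^\star$, the sibling-positions $J_1,\ldots,J_h$ are independent uniform on $[\delta_k]$ is false for any measurable choice $u^\star=u^\star(T)$. Take $\nseq$ with $n_0=100$, $n_{100}=1$: then $T$ is deterministically the star with $100$ leaves, every leaf has $\rS_u(T)=99$, and any measurable rule selects a fixed leaf, say $u^\star=(1)$ under the lexicographic rule. Then $J_1=1$ deterministically, $\rS^{\rg}_{u^\star}(T)=0$, and your over-event $\{M\geq\varepsilon^{-1}\}\cap\{\min(\rS^{\rd}_{u^\star},\rS^{\rg}_{u^\star})\leq\varepsilon M\}$ has probability $1$ at $\varepsilon=1/99$, whereas $2\sqrt{8\varepsilon}<1$. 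The selection of $u^\star$ from the ordered tree biases the $J_k$; your proposed fix of ``rotating children along the spine'' is not a law-preserving map because the spine itself is defined through $u^\star(T)$.

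The paper avoids this by \emph{not} conditioning on the shape. It takes an independent uniform shuffle $\Pi$ (independent uniform permutations at \emph{every} node, not just along a spine), uses that each deterministic shuffle is a type-preserving bijection so $\Pi(T)\eqdist T$, and then conditions on the full ordered tree $T=t$. For fixed $t$ one chooses $w\in t$ with $\rS_w(t)=\max_u\rS_u(t)$; since $\rS$ is shuffle-invariant, $\Pi_w(t)$ still maximizes $\rS$ in $\Pi(t)$, but it is \emph{not} required to equal $u^\star(\Pi(t))$. The positions of $\Pi_w(t)$ along its ancestral line are now genuinely independent uniform (they are $\pi_{w_i,\rk_{w_i}(t)}(j_{i+1})$ for fixed $j_{i+1}$), and your Chebyshev/large-jump dichotomy then goes through verbatim. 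In the star example this produces a uniform leaf of $\Pi(t)$ rather than the lex-minimal one, which is exactly what is needed.
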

Second, we remove the restriction $u\leq v$ within Proposition~\ref{bound_product_right-weight-height} by using that the tallest subtree pendant off the spine is equally likely to be on the left or the right. This allows us to bring the second-order height into play as follows.
\begin{prop}
\label{left-right_spine_height}
For all $s>0$, it holds that
\[\P\left(\H^{(2)}(T)\max_{u\in T}\min\big(\rS_u^{\rd}(T),\rS_u^{\rg}(T)\big)\geq s\right)\leq 2\P\left(\max_{\substack{u,v\in T\\ u\leq v}} (|v|-|u\wedge v|)\rS_u^{\rd}(T)\geq s\right).\]
\end{prop}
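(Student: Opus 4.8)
The plan is to exploit the left–right reflection symmetry of uniform plane trees with a fixed type, combined with Proposition~\ref{second-height_alt}; this is the symmetrisation idea announced before the statement. Throughout, $T$ is uniform on $\cT(\nseq)$.

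\textbf{Step 1: the mirror map.} For a tree $t$, let $\overline{t}$ be its \emph{mirror image}, obtained by reversing, at every vertex, the left-to-right order of its children: the associated bijection $t\to\overline{t}$ sends $u=(u_1,\dots,u_\ell)$ to the word whose $i$-th letter is $\rk_{(u_1,\dots,u_{i-1})}(t)+1-u_i$. This is a type-preserving involution of $\cT(\nseq)$ which is an isomorphism of tree structure, so it preserves vertex heights and most recent common ancestors; it exchanges older and younger siblings, hence swaps the left and right spinal weights, $\rS_{\overline u}^{\rg}(\overline t)=\rS_u^{\rd}(t)$ and $\rS_{\overline u}^{\rd}(\overline t)=\rS_u^{\rg}(t)$ (this is immediate from \eqref{spinal-weights_as_sums}); and it reverses the lexicographic order on pairs of ancestrally incomparable vertices while preserving it on ancestrally comparable ones. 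Since the map is a type-preserving bijection and $T$ is uniform, $\overline{T}\eqdist T$.

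\textbf{Step 2: a deterministic domination.} Set
\[
M(t)=\max_{\substack{u,v\in t\\ u\le v}}(|v|-|u\wedge v|)\,\rS_u^{\rd}(t),\qquad
M'(t)=\max_{\substack{u,v\in t\\ u\ge v}}(|v|-|u\wedge v|)\,\rS_u^{\rg}(t),
\]
so that the right-hand side of Proposition~\ref{left-right_spine_height} is exactly $2\,\P(M(T)\ge s)$. I claim that for every tree $t$,
\[
\H^{(2)}(t)\,\max_{u\in t}\min\!\big(\rS_u^{\rd}(t),\rS_u^{\rg}(t)\big)\ \le\ \max\!\big(M(t),M'(t)\big).
\]
Indeed, pick $u^\ast\in t$ achieving the inner maximum, put $S=\min(\rS_{u^\ast}^{\rd}(t),\rS_{u^\ast}^{\rg}(t))$, and invoke \eqref{second-height_alt2} to get $\H^{(2)}(t)\le\max_{v\in t}(|v|-|u^\ast\wedge v|)$; let $v^\ast$ attain this last maximum. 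If its value is $0$ the claimed inequality is trivial (its left side is then $0$, its right side nonnegative), so assume the value is positive, whence $v^\ast\neq u^\ast$ and $v^\ast$ is not an ancestor of $u^\ast$. If $u^\ast\le v^\ast$ lexicographically, then $S\le\rS_{u^\ast}^{\rd}(t)$ and the pair $(u^\ast,v^\ast)$ lies in the index set of $M(t)$, so $\H^{(2)}(t)\,S\le(|v^\ast|-|u^\ast\wedge v^\ast|)\rS_{u^\ast}^{\rd}(t)\le M(t)$; if instead $u^\ast\ge v^\ast$, then $S\le\rS_{u^\ast}^{\rg}(t)$ gives $\H^{(2)}(t)\,S\le M'(t)$ in the same way.

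\textbf{Step 3: transferring $M'$ to $M$, and conclusion.} Using Step~1, any pair $x\ge y$ in $\overline t$ can be written $x=\overline u$, $y=\overline v$ with $(|y|-|x\wedge y|)\rS_x^{\rg}(\overline t)=(|v|-|u\wedge v|)\rS_u^{\rd}(t)$; if $u\le v$ lexicographically this is at most $M(t)$ by definition, while if $u>v$ then $u$ and $v$ must be ancestrally comparable (incomparability plus Step~1 would force $\overline u<\overline v$, contradicting $x\ge y$), so $v\prec u$ and the quantity equals $0\le M(t)$. Hence $M'(\overline t)\le M(t)$ for every $t$. Combining Step~2, a union bound, $\overline T\eqdist T$, and this pointwise inequality,
\begin{align*}
\P\big(\H^{(2)}(T)\max_{u\in T}\min(\rS_u^{\rd}(T),\rS_u^{\rg}(T))\ge s\big)
&\le \P(M(T)\ge s)+\P(M'(T)\ge s)\\
&= \P(M(T)\ge s)+\P(M'(\overline T)\ge s)\\
&\le 2\,\P(M(T)\ge s),
\end{align*}
which is the assertion. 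The only delicate point is the bookkeeping in Steps~1 and~3 — tracking how the mirror map acts on the lexicographic order (reversing it only on incomparable pairs) and checking that the extra comparable pairs appearing in $M'(\overline t)$ contribute value $0$ so that $M'(\overline t)\le M(t)$ still holds; everything else is a direct application of Proposition~\ref{second-height_alt} and of the elementary symmetries of $\rS_u^{\rd}$ and $\rS_u^{\rg}$ under reflection.
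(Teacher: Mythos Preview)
Your proof is correct and follows essentially the same approach as the paper's: both arguments use the inequality $\H^{(2)}(t)\le\max_v(|v|-|u\wedge v|)$ from \eqref{second-height_alt2}, split according to the lexicographic relation between $u$ and $v$, and invoke the mirror symmetry of uniform plane trees with fixed type (the paper's mirroring shuffle $\rM$ is exactly your map $t\mapsto\overline t$) to transfer one case to the other. Your packaging via the deterministic inequalities $\H^{(2)}(t)\,S\le\max(M(t),M'(t))$ and $M'(\overline t)\le M(t)$ is a slightly cleaner bookkeeping than the paper's, which keeps $\min(\rS_u^{\rd},\rS_u^{\rg})$ through the mirror step and only bounds it by $\rS_u^{\rd}$ at the end, but the substance is identical.
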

We prove Propositions~\ref{left-right_spine_weights} and \ref{left-right_spine_height} in Section~\ref{sec:symmetrization} by exploiting the randomness of the planar order of $T$ with law-preserving shuffling operations. 
\smallskip

Assuming the five Propositions~\ref{prop:not_line}, \ref{width_vs_Luka}, \ref{bound_product_right-weight-height}, \ref{left-right_spine_weights} and \ref{left-right_spine_height}, we can now prove Theorem~\ref{thm:nseq}.

\begin{proof}[Proof of Theorem~\ref{thm:nseq}]
If $0\leq s\leq 2^{7\cdot 13/2}$ then $230 s^{-2/13}\geq 1$, which is an upper bound for any probability. Thus, we assume throughout this proof that $s\geq 2^{7\cdot 13/2}$. Recalling from (\ref{basic_bound_H_W}) the basic inequalities $\H(T)\leq n$ and $\W(T)\leq  n_0$, if $s> n_0$ then $sn\log n> n_0 n\log 3\geq \W(T)\H(T)$ almost surely. This means that we can also assume that $s\leq n_0$.

Let $\varepsilon_1,\varepsilon_2,\varepsilon_3>0$ that we will choose later. Recall that $\rS_u(T)=\rS_u^{\rg}(T)+\rS_u^{\rd}(T)\geq\rS_u^{\rd}(T)$. By comparing $\H(T)$ with $\H^{(2)}(T)$ on the one hand, and $\W(T)$ with $\max_{u\in T} \rS_u^{\rd}(T)$ and then $\max_{u\in T}\rS_u(T)$ with $\max_{u\in T}\min\big(\rS_u^{\rd}(T),\rS_u^{\rg}(T)\big)$ on the other hand, we obtain
\begin{align*}
\P\big(\H(T)\geq \varepsilon_3^{-1}\, ;\,& \W(T)\geq \varepsilon_1^{-1}\varepsilon_2^{-1}\, ;\, \W(T)\H(T)\geq s n\log n\big)\\
\leq \quad&\P\big(\varepsilon_3\H(T)\geq 1\, ;\, \varepsilon_3\H(T)>\H^{(2)}(T)\big)\\
+&\P\big(\varepsilon_1\W(T)\geq \max_{u\in T} \rS_u^{\rd}(T)\big)\\
+&\P\Big(\max_{u\in T}\varepsilon_2\rS_u(T)\geq 1\, ;\, \max_{u\in T}\varepsilon_2\rS_u(T)\geq \max_{u\in T}\min\big(\rS_u^{\rd}(T),\rS_u^{\rg}(T)\big)\Big)\\
+&\P\Big(\H^{(2)}(T)\max_{u\in T}\min\big(\rS_u^{\rd}(T),\rS_u^{\rg}(T)\big)\geq \varepsilon_1\varepsilon_2\varepsilon_3 s n\log n\Big)\, .
\end{align*}
Furthermore, Proposition~\ref{left-right_spine_height} asserts that
\begin{multline*}
\P\Big(\H^{(2)}(T)\max_{u\in T}\min\big(\rS_u^{\rd}(T),\rS_u^{\rg}(T)\big)\geq \varepsilon_1\varepsilon_2\varepsilon_3 s n\log n\Big)\\
\leq 2\P\Big(\max_{\substack{u,v\in T\\ u\leq v}} (|v|-|u\wedge v|)\rS_u^{\rd}(T)\geq \varepsilon_1\varepsilon_2\varepsilon_3 s n\log n\Big).
\end{multline*}
Next, if $\varepsilon_1^{4/3}\sqrt{ n_0-1}\geq 2^{6}$ and $\varepsilon_3^{1/3} n_0\geq 2$, then we can apply Propositions~\ref{prop:not_line}, \ref{width_vs_Luka}, \ref{left-right_spine_weights},  and \ref{bound_product_right-weight-height} to bound the terms of the right-hand side as follows:
\begin{multline}
\label{almost_final_step}
\P\big(\H(T)\geq \varepsilon_3^{-1}\, ;\, \W(T)\geq \varepsilon_1^{-1}\varepsilon_2^{-1}\, ;\, \W(T)\H(T)\geq s n\log n\big)\\
\leq \sqrt[3]{2^{22}}\varepsilon_1^{2/3} + 2\sqrt{8\varepsilon_2} + 64\varepsilon_3^{1/3} + 6 n^{2-\varepsilon_1\varepsilon_2\varepsilon_3 s/2}.
\end{multline}

Now, we choose the parameters $\varepsilon_1,\varepsilon_2,\varepsilon_3$ as follows:
\[\varepsilon_1=\tfrac{1}{\sqrt{2}} s^{-3/13}, \quad \varepsilon_2=2^{7} s^{-4/13},\quad \varepsilon_3=\tfrac{1}{2^3}s^{-6/13}.\] Thanks to the assumption that $ n_0\geq s\geq 2^{7\cdot 13/2}$, we verify that 
\[\varepsilon_1^{4/3}\sqrt{ n_0-1}\geq 2^{-2/3} n_0^{-4/13}\sqrt{ n_0/2}=2^{-7/3} n_0^{5/26}\geq 2^{-7/3+7\cdot 5/4}=2^{77/12}\geq 2^6,\]
and that $\varepsilon_3^{1/3} n_0\geq \tfrac{1}{2}  n_0^{-2/13} n_0\geq 2$. This allows us to write (\ref{almost_final_step}) with our choice for $\varepsilon_1,\varepsilon_2,\varepsilon_3$. We also compute $\varepsilon_1\varepsilon_2\varepsilon_3 s/2-2=2^{2+1/2}-2\geq 2/13$ and we note that $n\geq  n_0\geq s$. Therefore, (\ref{almost_final_step}) entails that
\begin{equation}
\label{almost_almost_final_step}\P\big(\H(T)\geq \varepsilon_3^{-1}\, ;\, \W(T)\geq \varepsilon_1^{-1}\varepsilon_2^{-1}\, ;\, \W(T)\H(T)\geq s n\log n\big)\leq 230s^{-2/13}.
\end{equation}

Finally, if $sn\log n \leq \W(T)\H(T)$, then $s\leq \H(T)$ because $\W(T)\leq n$, and $s\leq \W(T)$ because $\H(T)\leq n$. Thus, the inequality $sn\log n \leq \W(T)\H(T)$ implies that $\H(T)\geq s^{7/13} \cdot s^{6/13}\geq \varepsilon_3^{-1}$ and $\W(T)\geq s^{6/13}\cdot s^{3/13}s^{4/13}\geq \varepsilon_1^{-1}\varepsilon_2^{-1}$, since $s\geq 2^{7\cdot 13/2}$. Hence,
\[\P\big(\W(T)\H(T)\! \geq\! s n\log n\big)\leq \P\big(\H(T)\!\geq\! \varepsilon_3^{-1}\, ;\, \W(T)\! \geq\! \varepsilon_1^{-1}\varepsilon_2^{-1}\, ;\, \W(T)\H(T)\! \geq\!  s n\log n\big),\] which allows us to conclude the proof with (\ref{almost_almost_final_step}).
\end{proof}

\subsection{Proofs of the other theorems}
\label{sec:all_thms}
We now show how Theorems~\ref{thm:bien}, \ref{thm:sg} and \ref{thm:dseq} follow from Theorem~\ref{thm:nseq}. We earlier observed that Bienaymé trees are a special case of simply generated trees, so Theorem~\ref{thm:sg} implies Theorem~\ref{thm:bien}. We thus start by proving Theorem~\ref{thm:sg}. 

\begin{proof}[Proof of Theorem~\ref{thm:sg} using Theorem~\ref{thm:nseq}]
Fix a weight sequence $\wseq$ and an $n\ge 3$ such that $Z_n(\wseq)>0$. Let $\nseq=(n_i)_{i\ge 0}$ be a type sequence of size $n$. We claim that the law of $T_{\wseq,n}$ assigns equal mass to all trees with type sequence $\nseq$. Indeed, for $t$ a tree with type $\nseq$, 
\[\p{T_{\wseq,n}=t}=\frac{1}{Z_n(\wseq)}\prod_{v\in t} w_{\rk_v(t)}=\frac{1}{Z_n(\wseq)}\prod_{i\ge 0} w_i^{n_i}\]
which does not depend on $t$. Thus, conditional on $\nseq(T_{\wseq,n})=\nseq$, the tree $T_{\wseq,n}$ is distributed as $T_\nseq$. We apply Theorem~\ref{thm:nseq} to find that for all $s>0$, 
\begin{align*}
&\P(\W(T_{\wseq,n})\H(T_{\wseq,n})>sn\log n)\\
&\quad=\sum_{\nseq}\p{\nseq(T_{\wseq,n})=\nseq }\p{\W(T_{\wseq,n})\H(T_{\wseq,n})>sn\log n\mid \nseq(T_{\wseq,n})=\nseq }\\
&\quad=\sum_{\nseq}\p{\nseq(T_{\wseq,n})=\nseq }\p{\W(T_{\nseq})\H(T_{\nseq})>sn\log n }\\
&\quad \le \sum_{\nseq}\p{\nseq(T_{\wseq,n})=\nseq } 230s^{-2/13}=230s^{-2/13},
\end{align*}
where the sums run over all type sequences $\nseq$ for which $\p{\nseq(T_{\wseq,n})=\nseq}>0$.
\end{proof}

\begin{proof}[Proof of Theorem~\ref{thm:dseq} using Theorem~\ref{thm:nseq}]
Let $\dseq=(d_1,\dots,d_n)$ with $\sum_{i=1}^nd_i=n-1$ be a degree sequence of a tree of size $n$ and define $n_i=\#\{j:d_j=i\}$, so that $\nseq=(n_i)_{i\ge 0}$ is a type sequence of size $n$. We will show that 
\begin{equation}\label{eq:coupling_dseq_nseq}(\H(T_\dseq),\W(T_\dseq))\eqdist(\H(T_\nseq),\W(T_\nseq)),\end{equation}
so that Theorem~\ref{thm:dseq} then follows from Theorem~\ref{thm:nseq}. In this proof, by a {\em labelled plane tree} we mean a 
plane tree $\mathbf{t}$ of size $n$ whose vertices are labelled by the integers $[n]$. From a labelled plane tree, we may obtain a plane tree by ignoring the vertex labels, and we may obtain a labelled rooted tree with vertex set $[n]$ by ignoring the orderings. 

Let $\mathbf{T}_\nseq$ be the random labelled plane tree obtained from $T_\nseq$ by, for each $i\ge 0$, assigning the labels $\{j:d_j=i\}$ to its $n_i$ vertices with $i$ children uniformly at random (without replacement). Then, let $\widehat{T_\nseq}$ be the labelled rooted tree obtained from $\mathbf{T}_\nseq$ by forgetting its planar ordering. We claim that $\widehat{T_\nseq}\eqdist T_\dseq$. Observe that for $t$ a labelled rooted tree with degree sequence $\dseq$,  
\[\p{\widehat{T_\nseq}=t}=\sum_{\mathbf{t}}\p{\mathbf{T}_\nseq=\mathbf{t}},\]
where the sum is over labelled plane trees $\mathbf{t}$ whose underlying (unordered) labelled rooted tree is $t$. This sum has $\prod_{i=1}^n d_i!$ summands, because for each $i\in [n]$ we can assign $d_i!$ possible orderings to its children. Moreover, for any $\mathbf{t}$ included in the sum, writing $t'$ for its underlying (unlabelled) plane tree, we see that $t'$ must have type $\nseq$ because $t$ has degree sequence $\dseq$, so 
\[\p{\mathbf{T}_\nseq=\mathbf{t}}= \frac{1}{\prod_{i\ge 0} n_i!}\p{T_\nseq=t'}= \frac{1}{\#\cT(\nseq)\prod_{i\ge 0} n_i!},\]
which does not depend on $\mathbf{t}$. Thus, $\P(\widehat{T_\nseq}=t)$ does not depend on $t$, so that $\widehat{T_\nseq}\eqdist T_\dseq$ as claimed. Finally, we see that $\W(T_\nseq)=\W(\widehat{T_\nseq})$ and $\H(T_\nseq)=\H(\widehat{T_\nseq})$, so that $(T_\nseq, \widehat{T_\nseq})$ is a coupling of $T_\nseq$ and $T_\dseq$ under which both trees have the same height and width. This implies \eqref{eq:coupling_dseq_nseq} and thus concludes the proof. 
\end{proof}

\section{A random tree does not look like a line} 
\label{sec:H2vsH}

Let $t$ be a tree. Recall that $\H(t)=\max_{u\in t}|u|$ stands for its height. In this section, we are interested in comparing the height to the second-order height introduced in Section~\ref{sec:construction_around_spine}:
\[\H^{(2)}(t)=\max_{u,v\in t}\min\big(|u|-|u\wedge v|,|v|-|u\wedge v|\big).\]
More precisely, our goal is to show Proposition~\ref{prop:not_line}, i.e.~that if $\nseq=(n_i)_{i\geq 0}$ is a type sequence of size $n\geq 3$ and if $T$ is uniformly distributed on $\cT(n)$, then $\H(T)$ and $\H^{(2)}(T)$ have roughly the same order with high probability.

The intuition behind the desired proposition is as follows. If a tree $t$ looks like a line then swapping the highest quarter of the line with some leaf in a pendant tree off the lower half of the line yields many possible trees, all of the same type as $t$, whose second-order heights are close to their heights. In other words, even if $T$ is uniformly built from gluing only one line onto another, it is more likely to look like a fork than a line. (In our proof, we will optimise the quantities $1/4$ and $1/2$.)

To follow this idea, we need to choose the subtree of $t$ that we will cut and re-graft elsewhere. We do this by defining $\ru(t)$ as the minimal vertex of $t$ (for the lexicographic order) such that $|\ru(t)|=\H(t)$, the spine then being the ancestral lineage of $\ru(t)$. Then, (\ref{second-height_alt1}) in Proposition~\ref{second-height_alt} gives $\H^{(2)}(t)=\max_{v\in t}|v|-|v\wedge\ru(t)|$. For any $\lambda\in[0,1]$, set
\begin{equation}
\label{def_rL_lambda}
\rL_\lambda(t)=\#\{v\text{ leaf of }t\, :\, |v\wedge \ru(t)|\geq (1-\lambda)|\ru(t)|\}.
\end{equation}
to keep track of the number of possible choices for the grafting position. Our strategy relies on the fact that if $\lambda$ is small then $\rL_\lambda(t)$ is small compared to the total number of leaves, i.e.~$\rL_1(t)$, as stated by the following lemma. This result, informally, implies that there are many leaves in the lower portion of a line-like tree to choose from, resulting in many fork-like trees for each line-like tree.

\begin{lem}
\label{lem:half_leaves_low}
Let $\nseq=( n_i)_{i\geq 0}$ be a type sequence and let $T$ be uniformly distributed on $\cT(\nseq)$. For all $\delta,\lambda\in(0,1/2)$, if $\delta n_0\geq 2$ then it holds that
\[\P\big(\H(T)\geq 1/\lambda\, ;\, \rL_\lambda(T)>  \delta n_0\big)\leq 12\lambda/\delta^2.\]
\end{lem}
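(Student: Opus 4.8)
The plan is to estimate $\mathbb{E}[\rL_\lambda(T)\,;\,\H(T)\geq 1/\lambda]$ and then apply Markov's inequality. The quantity $\rL_\lambda(T)$ counts leaves $v$ whose meet with the designated deepest vertex $\ru(T)$ is high in the tree, say at height $\geq (1-\lambda)|\ru(T)|\geq (1-\lambda)\H(T)$. The key observation is that such leaves must lie in subtrees that branch off the spine (the ancestral line of $\ru(T)$) at one of the top $\lambda|\ru(T)|+1$ spine vertices. If we write $h=\H(T)$ and let the spine be $\varnothing=\rho_0\prec\rho_1\prec\cdots\prec\rho_h=\ru(T)$, then each leaf counted by $\rL_\lambda(T)$ lies in $\theta_{\rho_i} T$ for some $i$ with $i\geq (1-\lambda)h$, and moreover its meet with $\ru(T)$ equals $\rho_i$, so it lies in one of the subtrees pendant to $\rho_i$ (not the one containing $\rho_{i+1}$). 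In particular, $\rL_\lambda(T)$ is at most the total number of vertices lying in subtrees that hang off the last $\lceil\lambda h\rceil+1$ spine vertices, and this is controlled by the spinal weights $\rS_{\rho_i}(T)$ together with the sizes of the pendant subtrees.

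\textbf{Reducing to spinal weights via a swap.} The heart of the argument should be an exchangeability/swap construction showing that it is \emph{unlikely} for many leaves to accumulate near the top of the deepest spine. I would proceed by a first-moment bound of the following shape: for each candidate tree shape, the event $\{\H(T)\geq 1/\lambda,\ \rL_\lambda(T)>\delta n_0\}$ forces a large number of leaves into a small ``window'' of the tree near $\ru(T)$. I would then build a type-preserving injection: given such a tree $t$, select a leaf $v$ counted by $\rL_\lambda(t)$ and a leaf $w$ in the bottom half (i.e.\ with $|w\wedge\ru(t)|<\tfrac12|\ru(t)|$, or more simply a leaf not counted by $\rL_{1/2}$), and swap the subtree hanging at the highest spine-branch containing $v$ with (part of) the subtree at $w$; this creates a fork and destroys the concentration of leaves at the top. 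Because there are $>\delta n_0$ choices for the ``bad'' leaf window but also $\Omega(n_0)$ leaves in the bottom half to swap into (using $\delta<1/2$ so that at least half the leaves are \emph{not} near the top — this is exactly the ``$\delta$ vs.\ $1-\delta$'' budget), counting these images gives a multiplicity of order $\delta n_0\cdot n_0\cdot(\text{something})$ against the weight of the starting configuration, while each target tree is hit a bounded number of times. The factor $\lambda$ in the bound comes from the fact that the top window has height only $\lambda h$, so the fraction of spine positions available to the bad leaves is $\leq\lambda$; summing the expected count of leaves in each of the top $\lambda h$ pendant positions and using $\H(T)\geq 1/\lambda$ to make $\lambda h\geq 1$ meaningful yields the $\lambda/\delta^2$ shape after dividing by $\delta n_0$ twice (Markov in $\rL_\lambda$, and the $\delta^{-1}$ already present in the hypothesis $\delta n_0\geq 2$ appearing squared from the two-sided counting).

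\textbf{Expected main obstacle.} The principal difficulty is making the swap map genuinely type-preserving and genuinely injective (or boundedly-many-to-one) while keeping clean control of the numerics so that the constant $12$ and the exponents in $\lambda/\delta^2$ come out right. Swapping subtrees changes which vertex is $\ru$ and can change $\rL_\lambda$ in a way that's awkward to invert; one must record enough data (the identity of the swapped leaves, the relevant spine heights) in the image to reconstruct the preimage, and one must check that the image tree indeed has its deepest vertex in a ``forked'' position so that it falls outside the bad event — or at least is hit by few preimages. I would handle this by working not with a single swap but with a weighted count: estimate $\sum_{t}\#\cT(\nseq)^{-1}\rL_\lambda(t)\,\I{\H(t)\geq 1/\lambda}$ directly by exchanging the order of summation (sum over leaves $v$ first, then over trees in which $v$ is counted), and for each fixed ``role'' of $v$ re-graft to obtain a bijection with a set of trees where $v$'s subtree sits at a uniformly chosen bottom-half leaf; the number of such bottom-half positions is $\geq\rL_1(t)-\rL_{1/2}(t)\geq n_0/2\geq \delta n_0/\delta\cdot(1/2)$, which converts the count into the claimed $\lambda/\delta^2$ bound. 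The condition $\delta n_0\geq 2$ ensures the bottom-half leaf set is nonempty and that the Markov step $\P(\rL_\lambda>\delta n_0)\leq \mathbb{E}[\rL_\lambda]/(\delta n_0)$ is not vacuous. The restriction $\lambda,\delta<1/2$ is what guarantees the top window and the bottom half are disjoint, so the swap is well-defined. Once the bijection is set up, the remaining calculation is routine bookkeeping of which constants multiply, landing at $12\lambda/\delta^2$.
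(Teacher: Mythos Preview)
Your proposal has a genuine gap at the level of the main mechanism. Two issues compound.

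\textbf{The $\delta^{-2}$ cannot come from a first moment.} You open by saying you will bound $\mathbb{E}[\rL_\lambda(T);\H(T)\geq 1/\lambda]$ and apply Markov, but Markov against the threshold $\delta n_0$ yields only a single factor $\delta^{-1}$. Your attempt to manufacture a second $\delta^{-1}$ from ``the hypothesis $\delta n_0\geq 2$'' is not a mechanism; that hypothesis is just a non-vacuity condition. The paper gets $\delta^{-2}$ honestly by a \emph{second-moment} step: conditionally on $T$, draw two leaves $V,V'$ uniformly without replacement; on the event $E=\{\H(T)\geq 1/\lambda,\ \rL_\lambda(T)>\delta n_0\}$ the conditional probability that both are counted by $\rL_\lambda$ is at least $\delta^2/2$, so $\P(E)\leq (2/\delta^2)\,\P(A)$ where $A=\{|V\wedge\ru(T)|,|V'\wedge\ru(T)|\geq(1-\lambda)\H(T)\}$. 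On $A$ one has $|V\wedge V'|\geq(1-\lambda)|V|$ and $|V|\gtrsim 1/\lambda$, so it remains to bound $\P\big(2\lambda|V|\geq 1;\ |V\wedge V'|\geq(1-\lambda)|V|\big)$.

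\textbf{The factor $\lambda$ does not come from a swap; it comes from a cyclic shift.} Your proposed swap/re-graft of a top leaf with a bottom leaf runs straight into the obstacle you yourself flag: it moves $\ru(t)$, it is hard to make boundedly-many-to-one, and there is no clear reason the resulting count picks up a factor $\lambda$. The paper avoids all of this with a different, cleaner idea: it applies a \emph{cyclic shift} $\varphi_i$ of the ancestral line of the \emph{random} leaf $V$ by a uniform amount $i\in\{0,\ldots,|V|-1\}$ (re-rooting at the $i$-th ancestor of $V$ and grafting the severed piece back at $V$). This map is type-preserving, satisfies $\varphi_{-i}\circ\varphi_i=\mathrm{id}$, hence preserves the joint law of $(T,V,V')$, and it translates $|V\wedge V'|$ additively modulo $|V|$. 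After the shift the meet height is essentially uniform on $\{0,\ldots,|V|-1\}$, so the probability it lands in a window of length $\lambda|V|$ is $O(\lambda)$. That is where the $6\lambda$ and hence the final $12\lambda/\delta^2$ come from. Your sketch never identifies this uniformisation mechanism, and without it the argument does not close.
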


\begin{proof}[Proof of the lemma]

The idea is to perform a random circular shift on the ancestral lineage of $\ru(T)$ so that the position of the most recent common ancestor of a given leaf becomes uniform on the spine. Unfortunately, this transformation will not preserve the law of $T$ because it may increase (but not decrease) the height. We overcome this issue by using a uniform random leaf of $T$ instead of $\ru(T)$. Hence, we begin by relating the event $E=\{\H(T)\geq 1/\lambda\, ;\, \rL_\lambda(T)> \delta n_0\}$ to properties of uniformly random leaves.

\begin{figure}
\centering
\includegraphics[page=1,width=0.3\textwidth]{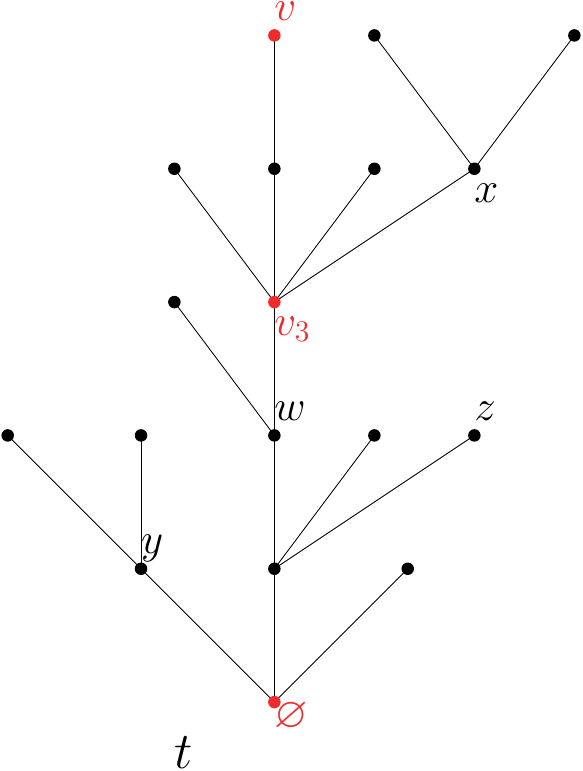}
\hspace{1em}
\includegraphics[page=2,width=0.3\textwidth]{cyclic_shift}
\caption{\label{fig:cyclic} A depiction of the cyclic shift of $t$ along the ancestral line of $v$ for $i=3$. The vertices $v, v_3,\varnothing$ in $t$ are marked, and so are their images under $\psi$ in $\psi(t)$. The same holds for four arbitrary vertices $w,x,y,z$ in $t$.}
\end{figure}
Conditionally given $T$, we uniformly draw without replacement two leaves $V,V'$ of $T$. Note that this is always possible because the number of leaves of $T$ is $ n_0\geq 2$. Let us write $A=\{|V\wedge \ru(T)|\geq (1-\lambda)\H(T)\, ;\, |V'\wedge\ru(T)|\geq (1-\lambda)\H(T)\}$. We start by finding an upper bound of $\P(E)$ in terms of $\P(A\cap E)$.
We condition on $T$, we recall the definition of $\rL_\lambda(T)$ from (\ref{def_rL_lambda}), and we use $\delta n_0\geq 2$ to obtain
\[\P(A\cap E)=\E{\un_E\frac{\rL_\lambda(T)(\rL_\lambda(T)-1)}{ n_0( n_0-1)}}\geq \delta\frac{\delta n_0-1}{ n_0-1}\P(E)\geq \frac{\delta^2}{2}\P(E).\]
We will now bound $\P(A\cap E)$ from above. Observe that the fact that $V\wedge\ru(T)$ and $V'\wedge\ru(T)$ are both ancestors of $\ru(T)$ yields that $V\wedge V'\wedge\ru(T)$ must be equal to one of the two. In particular, it holds that $|V\wedge V'|\geq \min(|V\wedge\ru(T)|,|V'\wedge \ru(T)|)$. Together with $\H(T)\geq |V|$, this implies that $\P(A\cap E)\leq \P\big(|V|\geq (1-\lambda)/\lambda\, ;\, |V\wedge V'|\geq (1-\lambda)|V|\big)$. Thus, since $\lambda<1/2$, it is sufficient to show that
\begin{equation} 
\label{half_leaves_low_alt}
\P\big(2\lambda|V|\geq 1\, ;\, |V\wedge V'|\geq (1-\lambda)|V|\big)\leq 6\lambda.
\end{equation}

 The next step of the proof is to construct the cyclic shifts envisioned at the start of the proof. Let $t$ be a tree, let $v$ be a leaf of $t$, and let $i\in\{0,\ldots, |v|-1\}$. We can uniquely write $v=v_i*w$ where $|v_i|=i$. We construct a map $\psi:t\to\U$ as follows: for any $x\in t$,
\begin{compactenum}
    \item[$(i)$] we set $\psi(v_i)=\varnothing$ and $\psi(v)=w*v_i$;
    \item[$(ii)$] if $x\in t$ is not a descendant of $v_i$, then we set $\psi(x)=w*x$;
    \item[$(iii)$] if $x\in t$ is a descendant of $v_i$ unequal to $v$, then we set $\psi(x)=y$ where $x=v_i*y$.
\end{compactenum}
Then we set $\hat{t}=\psi(t)$. See also Figure~\ref{fig:cyclic}. It is easy to check that $\hat{t}$ is a tree, that $\psi$ is a bijection from $t$ to $\hat{t}$ and that $\rk_{\psi(x)}(\hat{t})=\rk_x(t)$ for all $x\in t$ (in particular $\psi(x)$ is a leaf of $\hat{t}$ if and only if $x$ is a leaf of $t$). It also follows that $\hat{t}$ has the same type as $t$ and so that $\P(T=\hat{t})=\P(T=t)$. Furthermore, we may check case by  that for any $x\in t$,
\begin{equation}
\label{cyclic_shift_spine}
\psi(v)\wedge\psi(x)=\psi(v\wedge x)\quad\text{ and }\quad
|\psi(v\wedge x)|=  \begin{cases}
                        |v|&\text{ if }x=v,\\
                        |v\wedge x|-i&\text{ if }i\leq |v\wedge x|<|v|,\\
                        |v\wedge x|-i+|v|&\text{ if }|v\wedge x|<i.
                    \end{cases}
\end{equation}
Finally, if $v'$ is a leaf of $t$ then we set $\varphi_i(t,v,v')=(\hat{t},\psi(v),\psi(v'))$. More generally, for any $i\in\Z$, we set $\varphi_i(t,v,v')=\varphi_{i_0}(t,v,v')$ where $i_0$ is the unique integer in $\{0,\ldots,|v|-1\}$ such that $i\in i_0+|v|\Z$. This defines a family of mappings $\varphi_i,i\in\Z$ of the set of trees (of type $\nseq$) equipped with two leaves. Then $(i)$-$(iii)$ allow us to identify that
\begin{equation}
\label{inversion_cyclic_shift_spine}
\varphi_{-i}\circ\varphi_i(t,v,v')=(t,v,v').
\end{equation}

We are now ready to conclude the proof by showing (\ref{half_leaves_low_alt}). Recall that conditionally given $T$, vertices $V$ and $V'$ are drawn from its set of leaves without replacement. Let $\Lambda$ be uniformly distributed in $[0,1]$ and independent of $(T,V,V')$. Let us set $(\widehat{T},\widehat{V},\widehat{V}')=\varphi_{\lfloor \Lambda|V|\rfloor}(T,V,V')$. We claim that $(\widehat{T},\widehat{V},\widehat{V}')$ has the same law as $(T,V,V')$: indeed, if $(t,v,v')$ is a tree of type $\nseq$ equipped with two leaves then we deduce from (\ref{inversion_cyclic_shift_spine}) that
\[\P\big((\widehat{T},\widehat{V},\widehat{V'})=(t,v,v')\big)=\P\big((T,V,V')=\varphi_{-\lfloor\Lambda|v|\rfloor}(t,v,v')\big)=\frac{1}{\#\cT(\nseq)}\times\frac{1}{ n_0( n_0-1)}.\]
Note that the second equality follows from the invariance by $\varphi_i,i\in\Z$ of the type and from the independence of $\Lambda$ with $(T,V,V')$. Therefore,
\[\xi:=\P\big(2\lambda |V|\geq 1\, ;\, |V\wedge V'|\geq (1-\lambda)|V|\big)=\P\big(2\lambda|\widehat{V}|\geq 1\, ;\, |\widehat{V}\wedge \widehat{V}'|\geq (1-\lambda)|\widehat{V}|\big).\]
Then, we separate the cases by relying on (\ref{cyclic_shift_spine}) to get the inequality
\begin{multline*}
\xi\leq \P\big(2\lambda |V|\geq 1\, ;\, |V\wedge V'|<\lfloor\Lambda|V|\rfloor\leq |V\wedge V'|+\lambda|V|\big)\\
+\P\big(2\lambda |V|\geq 1\, ;\, \lfloor\Lambda|V|\rfloor+(1-\lambda)|V|\leq |V\wedge V'|\leq |V|\big).
\end{multline*}
Since $\Lambda$ is uniform on $[0,1]$ and $\lambda\leq 1/2$, we can write
\[\xi\leq 2\, \P\big( 2\lambda|V|\geq 1\, ;\, \Lambda\leq \lambda+1/|V|\big)\leq 6\lambda,\]
which ends the proof of (\ref{half_leaves_low_alt}) and thus of the lemma.
\end{proof}

\begin{proof}[Proof of Proposition~\ref{prop:not_line}]
Let $\varepsilon >0$ such that $\varepsilon^{1/3} n_0\geq 2$. Recalling $\rL_{5\varepsilon}(t)$ from (\ref{def_rL_lambda}) and that $\rL_1(t)$ is the number of leaves of $t$, we define a set of bad trees
\[\mathcal{B}_{\varepsilon}=\{t\in\cT(\nseq)\, :\, \H^{(2)}(t)< \varepsilon\H(t),\H(t)\geq 1/\varepsilon,\rL_{5\varepsilon}(t)\leq \varepsilon^{1/3}\rL_1(t)\}\]
that have many leaves pendant off the lower portion of the spine yet are line-like. In this proof, we can assume that $\varepsilon<1/10$ because otherwise, $64\varepsilon^{1/3}\geq 1$ and the desired inequality follows readily. Together with the assumption that $\varepsilon^{1/3} n_0\geq 2$, this allows us to apply Lemma~\ref{lem:half_leaves_low} with $\delta=\varepsilon^{1/3}$ and $\lambda=5\varepsilon$ to find that
\begin{equation} 
\label{step0_not-line}
\P\big(\varepsilon\H(T)\geq 1\, ;\, \varepsilon\H(T)> \H^{(2)}(T)\big)\leq 60\varepsilon^{1/3}+\P\big(T\in\mathcal{B}_{\varepsilon}\big).
\end{equation}
The rest of the proof is devoted to bounding $\P(T\in\mathcal{B}_{\varepsilon})$ from above. As discussed earlier, we will rely on the transformation that moves the subtree stemming from some vertex of a tree to the position of a leaf. This transformation is formally constructed as follows.

Let $t$ be a tree, let $u\in t$, and let $v$ be a leaf of $t$. Note that $v$ is not an ancestor of $u$ because $v$ is a leaf of $t$. If $u$ is not an ancestor of $v$ then we can check that the following conditions characterise a unique tree $\tilde{t}$:
\begin{compactenum}
    \item[$(a)$] $u$ is a leaf of $\tilde{t}$;
    \item[$(b)$] $v\in \tilde{t}$ and $\theta_v\tilde{t}=\theta_u t$, i.e.~for all $w\in\U$, we have $v*w\in\tilde{t}$ if and only if $u*w\in t$;
    \item[$(c)$] for all $w\in\U$ that is not a descendant of $u$ or $v$, we have $w\in \tilde{t}$ if and only if $w\in t$.
\end{compactenum}
The tree $\tilde{t}$ has the same type as $t$. Indeed, we observe that $\rk_u(\tilde{t})=\rk_v(t)=0$; that if $v*w\in\tilde{t}$ then $\rk_{v*w}(\tilde{t})=\rk_{u*w}(t)$; and that if $w\in\tilde{t}$ is not a descendant of $u$ or $v$ then $\rk_w(\tilde{t})=\rk_w(t)$.  Therefore, writing $\varphi(t,u,v)=(\tilde{t},v,u)$ defines a mapping $\varphi$ on the set of trees (of a given type) with a marked vertex and leaf such that the former is not an ancestor of the latter. It is clear that $\varphi$ is an involution, i.e.~that
\begin{equation}
\label{inversion_switch_subtree}
\varphi\circ\varphi(t,u,v)=(t,u,v).
\end{equation}

We also set $\phi(t,u,v)=\tilde{t}$. Recall that $\ru(t)$ stands for the $\leq$-minimal vertex of $t$ such that $|\ru(t)|=\H(t)$. We are going to use $\phi$ to move the subtree stemming from some ancestor of $\ru(t)$. More precisely, we denote by $\ru_\varepsilon(t)$ the unique ancestor of $\ru(t)$ such that $(1-\varepsilon)\H(t)\leq |\ru_\varepsilon(t)|<(1-\varepsilon)\H(t)+1$. Observe that if $\ru_\varepsilon(t)\preceq v$ then $|v\wedge\ru(t)|\geq (1-\varepsilon)\H(t)$. This ensures that the map $f_\varepsilon:(t,v)\in\mathcal{B}_{\varepsilon}^\bullet\longmapsto \phi(t,\ru_\varepsilon(t),v)\in\cT$ is well-defined on the set 
\[\mathcal{B}^\bullet_{\varepsilon}=\{(t,v)\in\cT\times\mathbb{U}\, :\, t\in\mathcal{B}_{\varepsilon}, v\text{ leaf of } t,|v\wedge \ru(t)|<(1-5\varepsilon)\H(t)\}.\]
By definition of $\mathcal{B}_{\varepsilon}$ and $\mathcal{B}_{\varepsilon}^\bullet$ and since $2\varepsilon^{1/3}\leq 1$, we get that
\[\P\big(T\in\mathcal{B}_{\varepsilon}\big) n_0\leq \sum_{t\in\mathcal{B}_{\varepsilon}}2( n_0-\rL_{5\varepsilon}(t))\P(T=t)=2\sum_{t\in\mathcal{B}_{\varepsilon}}\P(T=t)\#\{v\in t\, :\, (t,v)\in\mathcal{B}_{\varepsilon}^\bullet\}.\]
As the law of $T$ is uniform, the fact that $f_\varepsilon(t,v)$ has always the same type as $t$ yields that
\begin{equation}
\label{step_proba_line-like}
\P\big(T\in\mathcal{B}_{\varepsilon}\big) n_0\leq 2\sum_{(t,v)\in\mathcal{B}_{\varepsilon}^\bullet}\P\big(T=f_\varepsilon(t,v)\big)=2\sum_{\tilde{t}\in \cT}\P(T=\tilde{t})\big| f_\varepsilon^{-1}(\{\tilde{t}\})\big|.
\end{equation}

Now we need to bound $\big|f_\varepsilon^{-1}(\{\tilde{t}\})\big|$ for any $\tilde{t}\in f_\varepsilon(\mathcal{B}_{\varepsilon}^\bullet)$. To do this, it will be useful to define $\ru^{(2)}(\tilde{t})$ as the $\leq$-minimal vertex of $\tilde{t}$ such that $|\ru^{(2)}(\tilde{t})|-|\ru^{(2)}(\tilde{t})\wedge\ru(\tilde{t})|=\H^{(2)}(\tilde{t})$. Note that this vertex is well-defined thanks to (\ref{second-height_alt1}). Furthermore, we denote by $A_\varepsilon(\tilde{t})$ the set of pairs $(\tilde{v},\tilde{u})\in\tilde{t}\times\tilde{t}$ such that $\tilde{u}$ is a leaf of $\tilde{t}$ with $|\tilde{u}|\geq (1-\varepsilon)\H(\tilde{t})$ and such that $\tilde{v}$ is an ancestor of $\ru^{(2)}(\tilde{t})$ with $0\leq \frac{\varepsilon}{1-\varepsilon}|\tilde{u}|-|\ru^{(2)}(\tilde{t})|+|\tilde{v}|<2$.

We postpone the proof of the following assertion:
\begin{equation}
\label{assert_switch_subtree}
\forall (t,v)\in\mathcal{B}_{\varepsilon}^\bullet,\quad \textit{if }\quad f_\varepsilon(t,v)=\tilde{t}\quad\textit{ then }\quad (v,\ru_\varepsilon(t))\in A_\varepsilon(\tilde{t})\textit{ and }\big|A_\varepsilon(\tilde{t})\big|\leq 2\rL_{5\varepsilon}(t).
\end{equation}
By construction, we always have $\varphi(t,\ru_\varepsilon(t),v)=(f_\varepsilon(t,v),v,\ru_\varepsilon(t))$ for any $(t,v)\in\mathcal{B}_{\varepsilon}^\bullet$. Since we know from (\ref{inversion_switch_subtree}) that $\varphi$ is an involution, we get that the size of $f_\varepsilon^{-1}(\{\tilde{t}\})$ is bounded by that of $A_\varepsilon(\tilde{t})$. If in addition the type of $\tilde{t}$ is $\nseq$ then the same holds for $t$ which thus has $ n_0$ leaves. Therefore, combining (\ref{step_proba_line-like}), (\ref{assert_switch_subtree}), and the definition of $\mathcal{B}_{\varepsilon}^\bullet$ entails that
\[\P\big(T\in\mathcal{B}_{\varepsilon}\big)\leq \frac{2}{ n_0}\sum_{\tilde{t}\in\cT}2\varepsilon^{1/3} n_0\P(T=\tilde{t})=4\varepsilon^{1/3}.\]
We deduce from (\ref{step0_not-line}) that 
$\P\big(\varepsilon\H(T)\geq 1 ; \varepsilon\H(T)>\H^{(2)}(T)\big)\leq 64\varepsilon^{1/3}$ as desired.
\end{proof}

\begin{proof}[Proof of (\ref{assert_switch_subtree})]
Here, let $(a)$-$(c)$ refer to the conditions used to construct $\varphi$ in the previous proof. We recall that $\ru(t)$ is the $\leq$-minimal vertex of $t$ with maximum height, and that $\ru_\varepsilon(t)$ is the ancestor of $\ru(t)$ verifying $(1-\varepsilon)\H(t)\leq |\ru_\varepsilon(t)|<(1-\varepsilon)\H(t)+1$. First note that $v\in\tilde{t}$ and that $\ru_\varepsilon(t)$ is a leaf of $\tilde{t}$ by construction. We will consider the word $w_\varepsilon$ characterised by $\ru(t)=\ru_\varepsilon(t)*w_\varepsilon$. We readily get $|w_\varepsilon|=\H(t)-|\ru_\varepsilon(t)|$. Plus, thanks to condition $(b)$, it is easy to check that $w_\varepsilon=\ru(\theta_{\ru_\varepsilon(t)}t)=\ru(\theta_v\tilde{t})$.

The first step of our proof is to compare the heights of $t$ and $\tilde{t}$ and to locate $\ru(\tilde{t})$. The maximum of $|x|$ where $x\in \tilde{t}$ ranges among the descendants of $v$ is equal to $|v|+\H(\theta_v\tilde{t})=|v|+|\ru(\theta_v\tilde{t})|=|v|+|w_\varepsilon|=|v|+\H(t)-|\ru_\varepsilon(t)|\leq |v|+\varepsilon\H(t)$. Plus, (\ref{second-height_alt1}) gives us $|v|-|v\wedge\ru(t)|\leq \H^{(2)}(t)$. Then, we get $|v\wedge\ru(t)|<(1-5\varepsilon)\H(t)$ and $|v|-|v\wedge\ru(t)|<\varepsilon\H(t)$ because $(t,v)\in\mathcal{B}_{\varepsilon}^\bullet$. Thus, we find that
\begin{equation}
\label{new_branch_doesnt_reach_height}
\max_{x\in\tilde{t},v\preceq x}|x|<(1-3\varepsilon)\H(t).
\end{equation}
However, we also have $\H(\tilde{t})\geq |\ru_\varepsilon(t)|\geq (1-\varepsilon)\H(t)$. Therefore, the vertex $\ru(\tilde{t})$ is not a descendant of $v$ and so $\ru(\tilde{t})\in t$ by $(a)$ and $(c)$. It follows that 
\begin{equation}
\label{height_comparison_after_switch}
(1-\varepsilon)\H(t)\leq \H(\tilde{t})\leq\H(t).
\end{equation}
In particular, we readily obtain $|\ru_\varepsilon(t)|\geq (1-\varepsilon)\H(\tilde{t})$ as desired.

The second step of our proof is to identify that $\ru^{(2)}(\tilde{t})=v*w_\varepsilon$. On the one hand, recall that $\ru(\tilde{t})\in t$ and that $|x|\leq |\ru(\tilde{t})|$ for any $x\in\tilde{t}$. If $x\in\tilde{t}$ is not a descendant of $v$ then $x\in t$ by $(a)$ and $(c)$, which thus entails that $|x|-|x\wedge\ru(\tilde{t})|=\min(|x|,|\ru(\tilde{t})|)-|x\wedge\ru(\tilde{t})|\leq \H^{(2)}(t)$. On the other hand, $y\wedge\ru(\tilde{t})$ cannot be a descendant of $v$ for any $y\in \tilde{t}$ because $\ru(\tilde{t})$ is not a descendant of $v$ in the first place. Hence, if $v\preceq y$ then $x\wedge\ru(\tilde{t})\prec v$ and so $x\wedge\ru(\tilde{t})=v\wedge\ru(\tilde{t})$. An application of (\ref{second-height_alt1}) then yields to
\[\H^{(2)}(\tilde{t})\geq \H(\theta_v\tilde{t})+|v|-|v\wedge\ru(\tilde{t})|\geq |w_\varepsilon|+1=\H(t)-|\ru_\varepsilon(t)|+1>\varepsilon\H(t).\]
Recall that $\H^{(2)}(t)<\varepsilon\H(t)$ as $t\in\mathcal{B}_{\varepsilon}$. It follows from the two previous arguments that $v\preceq \ru^{(2)}(\tilde{t})$, which further implies that $\ru^{(2)}(\tilde{t})=v*\ru(\theta_v\tilde{t})=v*w_\varepsilon$.

Therefore, we have shown that $v\preceq \ru^{(2)}(\tilde{t})$ as desired. Plus, it holds that $|v|-|\ru^{(2)}(\tilde{t})|=-|w_\varepsilon|=|\ru_\varepsilon(t)|-\H(t)$. Then, the definition of $\ru_\varepsilon(t)$ yields $0\leq \frac{\varepsilon}{1-\varepsilon}|\ru_\varepsilon(t)|-|\ru^{(2)}(\tilde{t})|+|v|<\frac{1}{1-\varepsilon}$. The desired inequality follows from the assumption that $\varepsilon<1/10$.

Now, let $(\tilde{u},\tilde{v})\in \tilde{t}\times\tilde{t}$ be arbitrary. If $(\tilde{u},\tilde{v})\in A_\varepsilon(\tilde{t})$ then the integer $|\tilde{v}|$ must belong to an interval determined by $(\tilde{t},\tilde{u})$ whose length is strictly smaller then $2$. In other words, given $\tilde{t}$ and $\tilde{u}$, if $(\tilde{u},\tilde{v})\in A_\varepsilon(\tilde{t})$ then $|\tilde{v}|$ can take at most two values. As the ancestors of $\ru^{(2)}(\tilde{t})$ are uniquely determined by their heights, we deduce that \[\big|A_\varepsilon(\tilde{t})\big|\leq 2\big|\{\tilde{u}\text{ leaf of } \tilde{t}\, :\, |\tilde{u}|\geq (1-\varepsilon)\H(\tilde{t})\}\big|.\]
Next, we assume that $\tilde{u}$ is a leaf of $\tilde{t}$ with $|\tilde{u}|\geq (1-\varepsilon)\H(\tilde{t})$. We have $|\tilde{u}|\geq (1-2\varepsilon)\H(t)$ thanks to (\ref{height_comparison_after_switch}) and then (\ref{new_branch_doesnt_reach_height}) ensures that $\tilde{u}$ is not a descendant of $v$. By $(a)$ and $(c)$, it holds that either $\tilde{u}=\ru_\varepsilon(t)$ or $\tilde{u}$ is a leaf of $t$ distinct from $\ru(t)$. In both cases, $\tilde{u}\in t$ so $(1-2\varepsilon)\H(t)\leq |\tilde{u}|-|\tilde{u}\wedge\ru(t)|+|\tilde{u}\wedge\ru(t)|\leq \H^{(2)}(t)+|\tilde{u}\wedge\ru(t)|$ by (\ref{second-height_alt1}). The fact that $t\in\mathcal{B}_{\varepsilon}$ then entails that $|u\wedge\ru(t)|\geq (1-3\varepsilon\H(t))$. Thus, the number of leaves of $\tilde{t}$ with height at least $(1-\varepsilon)\H(\tilde{t})$ is bounded by $\rL_{3\varepsilon}(t)\leq \rL_{5\varepsilon}(t)$ and so $\big|A_\varepsilon(\tilde{t})\big|\leq 2\rL_{5\varepsilon}(t)$.
\end{proof}

\section{Coding trees by walks}
\label{sec:coding}

\subsection{Depth and breadth-first walks}
\label{sec:depth_breadth}
\begin{figure}
\centering
\begin{subfigure}{0.3\textwidth}
\centering
\includegraphics[page=1,width=\textwidth]{coding_trees}
\end{subfigure}
\hspace{1em}
\begin{subfigure}{0.5\textwidth}
\centering
\includegraphics[page=2,width=\textwidth]{coding_trees}\\
\vspace{1em}
\includegraphics[page=3,width=\textwidth]{coding_trees}
\end{subfigure}
\caption{A tree $t$ together with its depth-first walk $X^\dfs(t)$ and breadth-first walk $X^\bfs(t)$. The depth-first exploration of $t$ is given by 
$u^\dfs(t)=(\varnothing,1, 11, 111, 112,1121,1122,113,12,121)$ and its breadth-first exploration is given by $u^\dfs(t)=(\varnothing,1, 11,12,111,112,113,121,1121,1122)$. }
\end{figure}

Given a tree $t$ of size $n$, we list its vertices in the lexicographic order as $\varnothing=u_0^{\dfs}(t)<u_1^\dfs(t)<\ldots<u_{n-1}^\dfs(t)$. We call the sequence $u^\dfs(t)=(u_i^\dfs(t)\, ;\, 0\leq i\leq n-1)$ the \emph{depth-first exploration} of $t$. Alternatively, we consider the \emph{breadth-first exploration} of $t$, which is the unique ordering $u^\bfs(t)=(u_i^\bfs(t)\, ;\, 0\leq i\leq n-1)$ of all the elements of $t$ that satisfies the following property: for all $0\leq i<j\leq n-1$, either $|u_i^\bfs(t)|<|u^\bfs_j(t)|$, or both $|u_i^\bfs(t)|=|u_j^\bfs(t)|$ and $u_i^\bfs(t)<u_j^\bfs(t)$ hold. Informally, the breadth-first exploration successively traverses every level of $t$ in the lexicographic order. 

Recall that if $u\in t$ then $\rk_u(t)$ is the number of children of $u$ in $t$. Then we set $X_0^\dfs(t)=X_0^\bfs(t)=0$ and for all $0\leq i\leq n-1$,
\[X_{i+1}^\dfs(t)=X_i^\dfs(t)+\rk_{u_i^\dfs(t)}(t)-1\quad\text{ and }\quad X_{i+1}^\bfs(t)=X_i^\bfs(t)+\rk_{u_i^\bfs(t)}(t)-1.\]
The integer-valued sequences $X^\dfs(t)=(X_i^\dfs(t)\, ;\, 0\leq i\leq n)$ and $X^\bfs(t)=(X_i^\bfs(t)\, ;\, 0\leq i\leq n)$ are called the \emph{depth-first walk} and the \emph{breadth-first walk} of $t$ respectively. We point out that $X^\dfs(t)$ is also commonly known as \emph{the \L ukasiewicz path} of $t$ in the literature. These two sequences characterise their associated tree and highlight some of its properties. In this work, we will use the following folklore result, which involves the set of finite non-negative excursions of downward skip-free walks defined as follows:
\begin{equation}
\label{set_walks_exc}
\mathbb{W}_{\mathrm{exc}}=\bigcup_{n\geq 1}\big\{(x_j)_{0\leq j\leq n}\in\Z^{n+1}\, :\, x_j\geq 0=x_0, x_n=-1,x_{j}-x_{j-1}\geq -1\text{ for all }j\in[n]\big\}.
\end{equation}

\begin{prop}\label{prop:bf_df_props}
Recall from Section~\ref{sec:construction_around_spine} the right-spinal weights $\rS_u(t)$ for $u\in t$. The depth-first and the breadth-first walks enjoy the following properties.
\begin{enumerate}
    \item[$(i)$] The map $t\longmapsto X^\dfs(t)$ is a bijection from the set $\cT$ of all trees to $\mathbb{W}_{\mathrm{exc}}$.
    \item[$(ii)$] The map $t\longmapsto X^\bfs(t)$ is a bijection from the set $\cT$ of all trees to $\mathbb{W}_{\mathrm{exc}}$.
    \item[$(iii)$] For any tree $t$ of size $n$, we have $X_i^\dfs(t)=\rS_{u_i^{\dfs}(t)}^{\rd}(t)$ for all $0\leq i\leq n$. 
    \item[$(iv)$] For any tree $t$, it holds that $\W(t)-1\leq \max X^\bfs(t)\leq 2\W(t)-1$.
\end{enumerate}
\end{prop}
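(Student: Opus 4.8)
The plan is to treat the four assertions separately, since they are largely independent. For parts $(i)$ and $(ii)$, I would set up the standard bijection argument: given a walk in $\bbW_{\mathrm{exc}}$, reconstruct the unique tree whose depth-first (resp.~breadth-first) walk it is. The cleanest route is to argue by induction on $n$. For the depth-first case, the first step $X^\dfs_1 - X^\dfs_0 = \rk_\varnothing(t) - 1$ records the root degree, and the excursion decomposes at the successive return times of the walk to levels $0, 1, \ldots, \rk_\varnothing(t)-1$ into sub-excursions, each of which (after shifting) lies in $\bbW_{\mathrm{exc}}$ and encodes one of the root subtrees $\theta_{(1)}t, \ldots, \theta_{(\rk_\varnothing(t))}t$; the induction hypothesis then gives a unique tree for each piece, and reassembling gives the unique preimage. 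For the breadth-first case the decomposition is by levels rather than by subtrees: the increments of $X^\bfs$ over the block of indices corresponding to generation $h$ determine the degrees of the generation-$h$ vertices in lexicographic order, hence the size and left-to-right structure of generation $h+1$; one peels off generations one at a time. In both cases one must check the range and skip-free conditions are exactly matched, i.e.~that the walk stays $\ge 0$ until the final step to $-1$ iff the reconstruction never runs out of vertices prematurely and terminates exactly at size $n$ — this is where the condition $\sum_i n_i = 1 + \sum_i i\, n_i$ (equivalently $\sum_{u\in t}(\rk_u(t)-1) = -1$) enters.

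For part $(iii)$, I would use the middle identity in \eqref{spinal-weights_as_sums}, namely $\rS^\rd_u(t) = \sum_{v*(j)\preceq u}(\rk_v(t) - j)$, and compare it to the telescoping sum defining $X^\dfs_i(t) = \sum_{k=0}^{i-1}(\rk_{u^\dfs_k(t)}(t) - 1)$. Writing $u = u^\dfs_i(t)$, the key combinatorial observation is that the set of vertices visited strictly before $u$ in the lexicographic order is exactly the set of \emph{strict ancestors} of $u$ together with the \emph{older siblings of $u$ and of each of its strict ancestors}, and all of \emph{their} descendants. Summing $\rk_v(t) - 1$ over a strict ancestor $v = v'*(j)$ and over the older siblings $v'*(1), \ldots, v'*(j-1)$ together with their full subtrees: each subtree rooted at an older sibling contributes $-1$ to the sum (its internal increments telescope to $-1$ since it is a complete plane tree), the ancestor $v$ itself contributes $\rk_v(t) - 1$, and there are $j - 1$ older siblings, giving a net contribution of $(\rk_v(t) - 1) - (j-1) = \rk_v(t) - j$ at each spinal step $v'*(j) \preceq u$. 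Summing over all spinal steps reproduces exactly $\rS^\rd_u(t)$. I would present this as a careful bookkeeping of which vertices precede $u$, most cleanly by induction on $i$ (passing from $u^\dfs_i$ to $u^\dfs_{i+1}$, the walk increments by $\rk_{u^\dfs_i}(t)-1$, and one checks the right-spinal weight changes by the same amount).

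For part $(iv)$, the upper bound $\max X^\bfs(t) \le 2\W(t) - 1$: at the index $i$ where $u^\bfs_i(t)$ is the last vertex of generation $h$, the value $X^\bfs_i(t)$ equals $\sum_{v:\,|v|=h}(\rk_v(t) - 1) = \#\{u : |u| = h+1\} - \#\{v : |v| = h\}$, which is at most $\W(t)$; and within a generation $X^\bfs$ first decreases (as one processes vertices of that generation, each contributing $\rk_v - 1 \ge -1$... actually it may increase) — more carefully, at any index $j$ with $u^\bfs_j(t)$ in generation $h$, $X^\bfs_j(t)$ equals $\#\{u:|u|=h+1, u \le \text{(the last child of } u^\bfs_{j-1}\text{)}\} - \#\{v:|v|=h, v \ge u^\bfs_j\}$ roughly, hence is bounded by (number of generation-$(h+1)$ vertices) $\le \W(t)$ plus at most $\W(t) - 1$ from the not-yet-processed part, giving $\le 2\W(t)-1$; I would make this precise by writing $X^\bfs_j(t)$ as a difference of two counts each $\le \W(t)$, one of which is $\ge 1$ except at the very end. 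For the lower bound $\max X^\bfs(t) \ge \W(t) - 1$: if generation $h+1$ realizes the width, then at the last index $i$ of generation $h$ we have $X^\bfs_i(t) = \W(t) - \#\{v:|v|=h\} \le \W(t) - 1$, which is the wrong direction — so instead take the first index $i$ of generation $h+1$; there $X^\bfs_i(t) = \#\{u:|u|=h+1\} + (\text{degree contributions from earlier generation-}(h+1)\text{ vertices, which is }0) - 1$... the clean statement is that when the walk first reaches level $h+1$ its value is $(\#\text{generation }h{+}1) - 1 = \W(t) - 1$ if the width is realized at level $h+1$ and no later level is larger; I would phrase this as: $X^\bfs$ evaluated just after finishing generation $h$ is $(\text{size of generation } h{+}1) - 0$ relative to the partial progress, and extract $\ge \W(t) - 1$.

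\textbf{Expected main obstacle.} Parts $(i)$, $(ii)$ are folklore and the only work is writing the bijective reconstruction carefully; part $(iii)$ is a bookkeeping identity that is routine once the right description of "vertices preceding $u$ in DFS order" is in hand. The genuinely fiddly step is part $(iv)$, specifically pinning down $X^\bfs_j(t)$ at an \emph{arbitrary} index $j$ (not just at generation boundaries) as an explicit difference of two nonnegative counts, each controlled by $\W(t)$, with the right one bounded below by $1$ — getting the constant $2$ (rather than something worse) and the $-1$ exactly right requires being precise about whether $u^\bfs_j(t)$'s own children have been "created" yet at step $j$. I would handle this by fixing the identity $X^\bfs_j(t) = \#\{w : w$ is a child of some $u^\bfs_k(t)$ with $k < j\} - \#\{k : k \ge j\text{ in the current generation onward... }\}$ and then bounding the two terms by the sizes of at most two consecutive generations.
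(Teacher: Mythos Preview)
Your plan is correct and would yield a complete proof. For parts $(i)$--$(iii)$ it is essentially the same as the paper's: the paper simply cites standard references for the bijections, and for $(iii)$ it records the ``queue'' identity
\[
X_i^{\dfs}(t)=\#\big\{u\in t : u=u_j^{\dfs}(t),\ \overleftarrow{u}=u_l^{\dfs}(t)\text{ for some }l< i<j\big\},
\]
whose right-hand side is exactly the set of younger siblings of $u_i^{\dfs}(t)$ and of each of its strict ancestors, i.e.\ $\rS^{\rd}_{u_i^{\dfs}(t)}(t)$. This is precisely the combinatorial content of your bookkeeping argument (each complete older-sibling subtree contributes $-1$, the ancestor contributes $\rk-1$, net $\rk-j$ per spinal step).

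The one place where the paper is cleaner is $(iv)$. Instead of trying to write $X^{\bfs}_j(t)$ as a signed difference as you attempt, the paper uses the breadth-first analogue of the queue identity: $X^{\bfs}_i(t)$ counts the vertices whose parent is visited before step $i$ but which are themselves visited after step $i$. Since BFS proceeds generation by generation, if $u_i^{\bfs}(t)$ lies in generation $h$ then every queued vertex lies in generation $h$ or $h{+}1$, and $u_i^{\bfs}(t)$ itself is excluded; this gives $X_i^{\bfs}(t)\le 2\W(t)-1$ immediately. For the lower bound, take $i$ to be the first index at which $u_i^{\bfs}(t)$ lies in a generation $h$ realizing the width: then the queue is exactly $\{v:|v|=h\}\setminus\{u_i^{\bfs}(t)\}$, so $X_i^{\bfs}(t)=\W(t)-1$. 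Adopting this queue viewpoint from the start would let you bypass the back-and-forth in your part-$(iv)$ plan and recover the constants $2$ and $-1$ without the case analysis you flag as the main obstacle.
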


\begin{proof}
The points $(i)$ and $(ii)$ are well-known, see e.g.~\cite[Proposition 1.1]{legall_trees} or \cite[Lemma 6.3]{pitman2006combinatorial}. One can observe that the value of the depth-first (resp.~breadth-first) walk at time $i$ counts the number of neighbours of the subset of vertices visited by the depth-first (resp.~breadth-first) exploration up to time $i-1$ and distinct from $u_i^\dfs(t)$ (resp.~$u_i^\bfs(t)$). More explicitly, we easily check by induction that for all $0\leq i\leq n$, it holds that
\begin{align}
X_i^\dfs(t)&=\#\{u\in t\, :\, u=u_j^\dfs(t)\text{ and }\overleftarrow{u}=u_l^\dfs(t)\text{ for some }1\leq l< i<j\leq n-1\},\label{eq:enum_dfs}\\
X_i^\bfs(t)&=\#\{u\in t\, :\, u=u_j^\bfs(t)\text{ and }\overleftarrow{u}=u_l^\bfs(t)\text{ for some }1\leq l< i<j\leq n-1\}.\label{eq:enum_bfs}
\end{align}
This implies $(iii)$ by definitions of the lexicographic and genealogical order. It also follows that if $u^\bfs_i(t)$ is the $\leq$-minimal vertex of the $h$-th level then $X_i^\bfs(t)+1=\#\{v\in t\, :\, |v|=h\}$. We then readily get the first inequality in $(iv)$. For the second inequality, note that if $u^\bfs_i(t)$ is contained in the $h$-th level, then the set of vertices enumerated by $X_i^\bfs(t)$ in \eqref{eq:enum_bfs} is contained in the $h$-th and $(h+1)$-th level and excludes $u^\bfs_i(t)$ itself. Both of these levels contain at most $\W(t)$ vertices, so  the second inequality in $(iv)$ follows. 
\end{proof}

\subsection{Discrete exchangeable bridges and Vervaat transform}
\label{subsec:vervaat}
A finite sequence of real numbers $\mathtt{b}=(b_1,\ldots, b_n)$ with $n\geq 2$ is called a \emph{jump sequence of length $n$} when $\sigma^2(\mathtt{b}):=\sum_{i=1}^n b_i^2>0$. A random sequence $Y=(Y_i\, ,\, 0\leq i\leq n)$ is an \emph{exchangeable bridge with jump sequence $\mathtt{b}$} when, for a uniformly random permutation $\pi$ of $[n]$,
\begin{equation}
\label{exc_bridge_def}
(Y_i,0\le i \le n) \eqdist \left(\sum_{j=1}^i b_{\pi(j)}, 0\le i \le n \right) =\left(\sum_{j=1}^n b_j\I{\pi(j)\leq i}, 0\le i \le n \right).
\end{equation}
This name is justified because $Y_0=0$ and $Y_n=\sum_{i=1}^n b_i$ almost surely, and by the fact that the increments of $Y$ are exchangeable since $\pi$ is a uniform permutation. Another way to describe the law of $Y$ is to say that it is the summation process associated with a total simple sampling of $\mathtt{b}$. 

Let $\nseq=( n_j)_{j\geq 0}$ be a type sequence with $\sum_{j\ge 0} n_j=n$. Let $\mathtt{d}=(d_1,\ldots,d_n)$ satisfy that $ n_j=\#\{i\in [n]\, :\, d_i=j\}$ so that $\sum_{i=1}^n d_i=n-1$. We call $\mathtt{d}$ \emph{a degree sequence corresponding to $\nseq$}. Let $\mathtt{b}=(b_1,\dots,b_n)=(d_1-1,\dots,d_n-1)$ so that $\mathtt{b}$ is a jump sequence with $\sum_{i=1}^n b_i=-1$. We call $\mathtt{b}$ \emph{a jump sequence corresponding to $\nseq$}.

We let $\cS_n$ be the set of permutations of $[n]$ and define
\begin{align*}
\mathbb{W}_{\mathrm{br}}(\nseq)&=\left\{\left(\sum_{j=1}^i b_{\gamma(j)}, 0 \le i \le n \right)\, : \, \gamma\in \cS_n \right\}\text{ and }\\
\mathbb{W}_{\mathrm{exc}}(\nseq)&=\big\{(y_0,\dots,y_n)\in \mathbb{W}_{\mathrm{br}}(\nseq)\, :\, y_0,\dots, y_{n-1}\ge 0\big\}= \mathbb{W}_{\mathrm{br}}(\nseq)\cap \mathbb{W}_{\mathrm{exc}}.
\end{align*}
It is not hard to see that definitions of $\mathbb{W}_{\mathrm{br}}(\nseq)$ and $\mathbb{W}_{\mathrm{exc}}(\nseq)$ do not depend on the choice of a jump sequence $\bseq$ corresponding to $\nseq$.
Then, the next lemma follows from Proposition~\ref{prop:bf_df_props} $(i)$ (resp.\ $(ii)$) and the observation that for any tree $t$ the offspring sizes in $t$ minus $1$ correspond to the increments in $X^\dfs(t)$ (resp.\ $X^\bfs(t)$).
\begin{lem}\label{lem:trees_excursions}
 It holds that $X^{\dfs}$ and $X^\bfs$ are bijections from $\cT(\nseq)$ to $\mathbb{W}_{\mathrm{exc}}(\nseq)$.
\end{lem}

\begin{figure}
\centering
\includegraphics[page=1,scale=0.7]{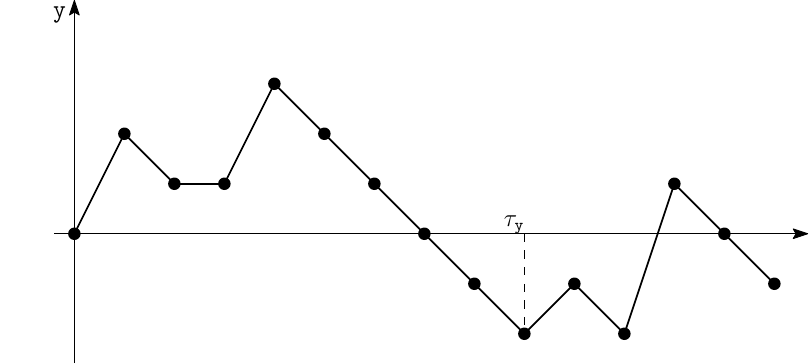}\\
\vspace{1em}
\includegraphics[page=2,scale=0.7]{vervaat}
\caption{\label{fig:vervaat}A bridge $\mathtt{y}$ and its Vervaat transform $V(\mathtt{y})$.}
\end{figure}

For $\mathtt{y}=(y_0,\dots,y_n)\in \mathbb{W}_{\mathrm{br}}(\nseq)$ and $j\in \N$, define 
\[\mathrm{shift}_j(\mathtt{y})=\left(y_{j+i}-y_j, 0\le i \le n \right)\]
where the indices are to be understood modulo $n$, so that $\mathrm{shift}_j(\mathtt{y})\in \mathbb{W}_{\mathrm{br}}(\nseq)$. 
Plus, let 
\begin{equation} 
\label{vervaat_def}\tau_{\mathtt{y}}=\min\{j\in [n]:y_j=\min \mathtt{y} \}\quad\text{ and }\quad V(\mathtt{y})=\mathrm{shift}_{\tau_{\mathtt{y}}}(\mathtt{y}).
\end{equation}
We call $V(\mathtt{y})$ the \emph{Vervaat transform} of $\mathtt{y}$. See Figure~\ref{fig:vervaat}. The following proposition and its continuous counterpart make the Vervaat transform a powerful tool, commonly used across discrete probability theory and stochastic analysis to study excursions of Markov processes \cite{Vervaat}. 
\begin{prop} 
\label{vervvat_bij}
It holds that $\mathtt{y}\mapsto (\tau_{\mathtt{y}},V(\mathtt{y}))$ is a bijection from $\mathbb{W}_{\mathrm{br}}(\nseq)$ to $[n]\times \mathbb{W}_{\mathrm{exc}}(\nseq)$. 
\end{prop}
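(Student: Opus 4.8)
The plan is to exhibit an explicit two-sided inverse for the map $\Phi\colon\mathtt{y}\mapsto(\tau_{\mathtt{y}},V(\mathtt{y}))$. First I would check that $\Phi$ indeed lands in $[n]\times\mathbb{W}_{\mathrm{exc}}(\nseq)$. That $\tau_{\mathtt{y}}\in[n]$ is immediate from the definition \eqref{vervaat_def}. To see that $V(\mathtt{y})\in\mathbb{W}_{\mathrm{exc}}(\nseq)$, note that $\mathrm{shift}_{\tau_{\mathtt{y}}}$ is a cyclic rotation of the increment sequence, hence preserves the multiset of increments, so $V(\mathtt{y})\in\mathbb{W}_{\mathrm{br}}(\nseq)$; and since $\tau_{\mathtt{y}}$ is the \emph{first} time the running minimum of $\mathtt{y}$ is attained, the rotated walk $V(\mathtt{y})=(y_{\tau_{\mathtt{y}}+i}-y_{\tau_{\mathtt{y}}})_{0\le i\le n}$ satisfies $y_{\tau_{\mathtt{y}}+i}-y_{\tau_{\mathtt{y}}}\ge 0$ for $0\le i\le n-1$ (for $i$ with $\tau_{\mathtt{y}}+i\le n$ this is because $y_{\tau_{\mathtt{y}}}$ is the minimum; for $\tau_{\mathtt{y}}+i>n$, i.e. after wrapping around, using $y_n-y_0=-1=\min\mathtt{y}\mathrm{'s\ endpoint\ drop}$ and minimality of $\tau_{\mathtt{y}}$ one gets strict positivity $y_{\tau_{\mathtt{y}}+i-n}-y_0 > \min\mathtt{y}-y_{\tau_{\mathtt{y}}}$, hence $\ge 0$ after the shift). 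The endpoint value is $y_{\tau_{\mathtt{y}}+n}-y_{\tau_{\mathtt{y}}}=y_n-y_0=-1$. So $V(\mathtt{y})\in\mathbb{W}_{\mathrm{exc}}\cap\mathbb{W}_{\mathrm{br}}(\nseq)=\mathbb{W}_{\mathrm{exc}}(\nseq)$.

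Next I would build the candidate inverse $\Psi\colon[n]\times\mathbb{W}_{\mathrm{exc}}(\nseq)\to\mathbb{W}_{\mathrm{br}}(\nseq)$ by $\Psi(k,\mathtt{z})=\mathrm{shift}_{n-k}(\mathtt{z})$ (indices mod $n$), which lies in $\mathbb{W}_{\mathrm{br}}(\nseq)$ since cyclic shifts preserve the increment multiset. The two verifications are: (a) $\Phi\circ\Psi=\mathrm{id}$, i.e. if $\mathtt{y}=\mathrm{shift}_{n-k}(\mathtt{z})$ with $\mathtt{z}\in\mathbb{W}_{\mathrm{exc}}(\nseq)$ and $k\in[n]$, then $\tau_{\mathtt{y}}=k$ and $V(\mathtt{y})=\mathtt{z}$; and (b) $\Psi\circ\Phi=\mathrm{id}$, i.e. $\mathrm{shift}_{n-\tau_{\mathtt{y}}}(V(\mathtt{y}))=\mathtt{y}$, which is just the group-law identity $\mathrm{shift}_{n-\tau_{\mathtt{y}}}\circ\mathrm{shift}_{\tau_{\mathtt{y}}}=\mathrm{shift}_n=\mathrm{id}$ on $\mathbb{W}_{\mathrm{br}}(\nseq)$ (the additive constants cancel because we subtract the value at the shift point each time). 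For (a), the key point is that an excursion-type walk $\mathtt{z}\in\mathbb{W}_{\mathrm{exc}}$ of length $n$, having $z_0=0$, $z_j\ge 0$ for $j<n$, $z_n=-1$ and down-steps of size at most $1$, attains its overall minimum $-1$ \emph{only} at time $n$. Therefore when we form $\mathtt{y}=\mathrm{shift}_{n-k}(\mathtt{z})$, the unique time the running minimum of $\mathtt{y}$ is first (and only) attained is the image of the time $n$, which is $k$; hence $\tau_{\mathtt{y}}=k$, and then $V(\mathtt{y})=\mathrm{shift}_k(\mathtt{y})=\mathrm{shift}_k\circ\mathrm{shift}_{n-k}(\mathtt{z})=\mathtt{z}$.

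The only genuinely delicate point — the main obstacle — is the claim in (a) that a walk in $\mathbb{W}_{\mathrm{exc}}$ hits its minimum $-1$ uniquely, at the terminal time, together with the uniqueness of $\tau_{\mathtt{y}}$ as the \emph{first} minimizer being the right choice to make the cyclic bookkeeping consistent. This is the classical cycle lemma (Dvoretzky--Motzkin) content, and it requires the skip-free-downwards hypothesis $x_j-x_{j-1}\ge-1$: since $z_{n-1}\ge 0$ and the last step is $\ge-1$, we get $z_n\ge -1$, with equality forcing $z_{n-1}=0$ and the last step to be exactly $-1$; and for any $j<n$ we have $z_j\ge 0>-1=z_n$, so the minimum is strict at $n$. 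I would state this as a short lemma (or inline observation), then assemble: $\Phi$ is well-defined into $[n]\times\mathbb{W}_{\mathrm{exc}}(\nseq)$, $\Psi$ is well-defined into $\mathbb{W}_{\mathrm{br}}(\nseq)$, and $\Phi,\Psi$ are mutually inverse by (a) and (b); hence $\Phi$ is a bijection, which is exactly the statement.
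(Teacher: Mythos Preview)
Your proof is correct and follows exactly the paper's approach: exhibit $\Psi(k,\mathtt{z})=\mathrm{shift}_{n-k}(\mathtt{z})$ as the two-sided inverse (the paper's own proof is the one-liner stating this inverse without further detail). The only slip is the parenthetical ``(and only)'' in part~(a): the minimum of the shifted bridge $\mathtt{y}$ need not be uniquely attained (if $z_j=0$ for some $0<j<n$ there are ties after time $k$), but since $\tau_{\mathtt{y}}$ is defined as the \emph{first} minimizer, your verification that $y_i>y_k$ for all $i<k$ already yields $\tau_{\mathtt{y}}=k$ and the argument goes through.
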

\begin{proof}
It is straightforward that its inverse is given by $(i,\mathtt{x})\mapsto \mathrm{shift}_{n-i}(\mathtt{x})$, which proves the statement.
\end{proof}

\begin{prop}\label{prop:law_bf_df}
Let $\nseq$ be a type sequence and let $\mathtt{b}=(b_1,\dots, b_n)$ be a jump sequence corresponding to $\nseq$.  Let $Y$ be an exchangeable bridge with jump sequence $\mathtt{b}$ and let $T$ be a uniformly random tree with type $\nseq$. Then, $V(Y)\eqdist X^{\dfs}(T)\eqdist X^{\bfs}(T)$.
\end{prop}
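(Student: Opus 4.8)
The plan is to deduce this from the two bijections established above: Lemma~\ref{lem:trees_excursions}, which says $X^{\dfs}$ and $X^{\bfs}$ are bijections $\cT(\nseq)\to\mathbb{W}_{\mathrm{exc}}(\nseq)$, and Proposition~\ref{vervvat_bij}, which says $\mathtt{y}\mapsto(\tau_{\mathtt{y}},V(\mathtt{y}))$ is a bijection $\mathbb{W}_{\mathrm{br}}(\nseq)\to[n]\times\mathbb{W}_{\mathrm{exc}}(\nseq)$. Since $X^{\dfs}(T)$ and $X^{\bfs}(T)$ are both pushforwards of the uniform law on $\cT(\nseq)$ under bijections onto $\mathbb{W}_{\mathrm{exc}}(\nseq)$, they are each uniformly distributed on $\mathbb{W}_{\mathrm{exc}}(\nseq)$; in particular $X^{\dfs}(T)\eqdist X^{\bfs}(T)$. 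So the real content is the identity $V(Y)\eqdist X^{\dfs}(T)$, i.e.\ that $V(Y)$ is uniform on $\mathbb{W}_{\mathrm{exc}}(\nseq)$.

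First I would observe that $Y$, as defined in \eqref{exc_bridge_def}, is uniformly distributed on the multiset $\{\,(\sum_{j=1}^i b_{\gamma(j)})_{0\le i\le n}:\gamma\in\cS_n\,\}$; more precisely, each element $\mathtt{y}\in\mathbb{W}_{\mathrm{br}}(\nseq)$ has $\P(Y=\mathtt{y})$ proportional to the number of permutations $\gamma$ realising it. The key point — which should be stated carefully — is that $\mathbb{W}_{\mathrm{br}}(\nseq)$ is an orbit under the cyclic shift $\mathrm{shift}_1$: applying $\mathrm{shift}_1$ to $(\sum_{j=1}^i b_{\gamma(j)})_{i}$ gives $(\sum_{j=1}^i b_{\gamma'(j)})_i$ where $\gamma'$ is the cyclic rotation of $\gamma$, because $\sum_{i=1}^n b_i=-1$ so the path increments just get rotated. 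Consequently the law of $Y$ is invariant under $\mathrm{shift}_j$ for every $j$, hence under the random shift $\mathrm{shift}_{\tau_Y}$: more carefully, one argues that for fixed $\mathtt{x}\in\mathbb{W}_{\mathrm{exc}}(\nseq)$,
\[
\P(V(Y)=\mathtt{x})=\sum_{i\in[n]}\P\big(V(Y)=\mathtt{x},\ \tau_Y=i\big)
=\sum_{i\in[n]}\P\big(Y=\mathrm{shift}_{n-i}(\mathtt{x})\big),
\]
using the inverse map from Proposition~\ref{vervvat_bij}. Now $\{\mathrm{shift}_{n-i}(\mathtt{x}):i\in[n]\}$ is exactly one full cyclic orbit inside $\mathbb{W}_{\mathrm{br}}(\nseq)$, and by the shift-invariance of the law of $Y$ each term $\P(Y=\mathrm{shift}_{n-i}(\mathtt{x}))$ equals $\P(Y=\mathtt{x}')$ for a common value; summing over the $n$ cyclic shifts of $\mathtt{x}$ gives a total that does not depend on $\mathtt{x}$ (it is $n/\#\mathbb{W}_{\mathrm{br}}(\nseq)$ times the total mass, by a counting of permutation preimages that is uniform across the orbit). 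Hence $V(Y)$ is uniform on $\mathbb{W}_{\mathrm{exc}}(\nseq)$, which is the law of $X^{\dfs}(T)$ by Lemma~\ref{lem:trees_excursions}.

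The main obstacle is the bookkeeping in the shift-invariance step: because different $\mathtt{y}\in\mathbb{W}_{\mathrm{br}}(\nseq)$ may be realised by different numbers of permutations $\gamma$, one must be slightly careful to confirm that this multiplicity is constant along a cyclic orbit (which it is, since cyclic rotation of $\gamma$ is a bijection of $\cS_n$ onto itself that commutes with $\mathrm{shift}_1$ at the level of paths), and to handle the possibility that $Y$ itself does not almost surely lie in $\mathbb{W}_{\mathrm{exc}}$ — indeed it generically does not, and the Vervaat map is precisely what corrects this. A cleaner route that avoids multiplicities altogether is to work directly with the uniform random permutation: let $\pi$ be uniform on $\cS_n$, set $Y$ as in \eqref{exc_bridge_def}, and note that the cyclically rotated permutation $\pi'(j)=\pi(j+\tau_Y \bmod n)$ adjusted appropriately is again uniform on $\cS_n$; then $V(Y)$ is the path of this rotated permutation conditioned to be an excursion, and one checks via Proposition~\ref{vervvat_bij} that the map $\gamma\mapsto(\tau,\text{excursion})$ is exactly $n$-to-one onto $[n]\times\mathbb{W}_{\mathrm{exc}}(\nseq)$ fibrewise, forcing uniformity of $V(Y)$ on $\mathbb{W}_{\mathrm{exc}}(\nseq)$. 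Either way, once $V(Y)$ is identified as uniform on $\mathbb{W}_{\mathrm{exc}}(\nseq)$, the three laws coincide with the uniform law on $\mathbb{W}_{\mathrm{exc}}(\nseq)$ transported appropriately, and the proof is complete.
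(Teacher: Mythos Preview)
Your approach is the same as the paper's: reduce to showing $V(Y)$ is uniform on $\mathbb{W}_{\mathrm{exc}}(\nseq)$, then use Proposition~\ref{vervvat_bij} to write $\P(V(Y)=\mathtt{x})=\sum_{i\in[n]}\P(Y=\mathrm{shift}_{n-i}(\mathtt{x}))$ and argue this does not depend on $\mathtt{x}$. However, you overcomplicate the last step and leave a small gap. Shift-invariance only gives you that $\P(Y=\mathtt{y})$ is constant \emph{within} a cyclic orbit; to conclude, you need it constant \emph{across} orbits (each $\mathtt{x}\in\mathbb{W}_{\mathrm{exc}}(\nseq)$ represents a different orbit). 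Your parenthetical ``uniform across the orbit'' does not provide this, and your alternative claim that $\gamma\mapsto(\tau,\text{excursion})$ is ``$n$-to-one'' is not correct in general: the fibre size is $\prod_{k\ge 0}n_k!$, not $n$.

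The paper resolves this in one line: for any $\mathtt{y}\in\mathbb{W}_{\mathrm{br}}(\nseq)$, the number of $\gamma\in\cS_n$ with $(b_{\gamma(1)},\ldots,b_{\gamma(n)})$ equal to the increment sequence of $\mathtt{y}$ is exactly $\prod_{k\ge 0}n_k!$, independently of $\mathtt{y}$, so $\P(Y=\mathtt{y})=C(\mathtt{b}):=\P(\forall i,\ b_{\pi(i)}=b_i)$ is constant on all of $\mathbb{W}_{\mathrm{br}}(\nseq)$. Then $\P(V(Y)=\mathtt{x})=nC(\mathtt{b})$ for every $\mathtt{x}$, and you are done. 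Once you insert this observation, your argument is complete and identical to the paper's.
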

\begin{proof}
Since, by Lemma~\ref{lem:trees_excursions}, $X^{\dfs}$ and $X^{\dfs}$ are bijections from $\cT(\nseq)$ to $\mathbb{W}_{\mathrm{exc}}(\nseq)$, it suffices to show that the Vervaat transform of $Y$ is a uniform element from $\mathbb{W}_{\mathrm{exc}}(\nseq)$. 

Indeed, for $\gamma$ a permutation of $[n]$ and 
\[\mathtt{y}:=\left(\sum_{j=1}^i b_{\gamma(j)}, 0\le i \le n \right)\] observe that 
\[\P(Y=\mathtt{y})=\P(\forall i, b_{\pi(i)}=b_{\gamma(i)})=\P(\forall i, b_{\pi\circ\gamma^{-1}(i)}=b_i)=\P(\forall i,b_{\pi(i)}=b_i)=:C(\mathtt{b}),\]
which does not depend on $\gamma$. Then, for $\mathtt{x}\in \mathbb{W}_{\mathrm{exc}}(\nseq)$, Proposition~\ref{vervvat_bij} yields
\[\P(V(Y)=\mathtt{x})=\sum_{i=1}^{n} \P(Y=\mathrm{shift}_i(\mathtt{x}))=nC(\mathtt{b}),\]
and the claim follows.
\end{proof}

\subsection{Range of exchangeable bridges} For any finite sequence $\mathtt{y}=(y_0,\ldots,y_n)$ of real numbers, we denote by 
\begin{equation} 
\label{range_def}
R(\mathtt{y})=\max_{0\leq i\leq n}y_i\, -\, \min_{0\leq i\leq n}y_i
\end{equation}
its \emph{range}. The goal of this section is to show that for $(\mathtt{b}^n, n\ge 1)$ a procession of jump sequences, where $\mathtt{b}^n$ has length $n$ and sums to $0$, the order of magnitude of the range of exchangeable bridge with jump sequence $\mathtt{b}^n$ is roughly deterministic as $n\to \infty$. More precisely, we want to prove that with high probability, its range is of the same order as the standard deviation $\sigma(\mathtt{b}^n):=\sqrt{\sum_{i=1}^n (b_i^n)^2}$ of its jump sequence $\mathtt{b}$. This has as a consequence that the order of the width of a uniform tree with a given type sequence is deterministic and of the same order as the maximum of its right-spinal weights, and thus will imply Proposition~\ref{width_vs_Luka}.

\begin{thm}
\label{thm:bridge_range}
For all $n\geq 2$, let $\mathtt{b}^n=(b_1^n,\ldots, b_n^n)$ be a jump sequence of length $n$ with $\sum_{i=1}^n b_i^n=0$ and let $Y^n$ be an exchangeable bridge with jump sequence $\mathtt{b}^n$. If $\sigma_n=\sqrt{\sigma^2(\mathtt{b}^n)}$ for all $n\geq 2$, then the sequence of random variables $\sigma_n^{-1}R(Y^n)$ is tight in $(0,\infty)$. 
\end{thm}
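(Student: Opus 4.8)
The plan is to establish the two ``halves'' of tightness in $(0,\infty)$ separately: (i) that no mass escapes to $+\infty$, i.e.\ $\sup_{n\ge 2}\P\big(R(Y^n)>A\sigma_n\big)\to 0$ as $A\to\infty$; and (ii) that no mass escapes to $0$, i.e.\ $\sup_{n\ge 2}\P\big(R(Y^n)<a\sigma_n\big)\to 0$ as $a\to 0^+$. Throughout write $\mathtt b^n=(b^n_1,\dots,b^n_n)$ and realise $Y^n$ as the partial sums $Y^n_i=\sum_{k=1}^i b^n_{\pi(k)}$ of a uniformly random permutation $\pi$ of $[n]$; since $\sum_j b^n_j=0$ we have $Y^n_0=Y^n_n=0$, and the usual variance formula for sampling without replacement from a mean-zero population gives $\va{Y^n_i}=\frac{i(n-i)}{n(n-1)}\sigma_n^2$ for $0\le i\le n$. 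Only large $n$ need genuine attention: for every fixed $n$ one has $R(Y^n)\ge\max_j|b^n_j|\ge n^{-1/2}\sigma_n>0$ deterministically (see below), so finitely many values of $n$ can always be absorbed by shrinking $a$.

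For (i) I would use the martingale naturally attached to sampling without replacement. Conditionally on the first $i$ draws, the $n-i$ undrawn values sum to $-Y^n_i$, so the next draw has conditional mean $-Y^n_i/(n-i)$; hence $M_i:=Y^n_i/(n-i)$, $0\le i\le n-1$, is a martingale for the natural filtration. Doob's $L^2$ maximal inequality applied up to time $m=\lfloor n/2\rfloor$, together with $|Y^n_i|=(n-i)|M_i|\le n|M_i|$ and the variance formula (which gives $\E{M_m^2}=\va{Y^n_m}/(n-m)^2\le 4\sigma_n^2/n^2$), yields $\E{\max_{0\le i\le m}(Y^n_i)^2}\le 4n^2\E{M_m^2}\le C\sigma_n^2$ for an absolute constant $C$. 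The reversed increment sequence $(b^n_{\pi(n)},\dots,b^n_{\pi(1)})$ is again a uniform ordering of $\mathtt b^n$, so the time-reversed bridge has the same law as $Y^n$; applying the same estimate to it controls $\E{\max_{m\le i\le n}(Y^n_i)^2}$ as well. Combining, $\E{(\max_i|Y^n_i|)^2}\le C'\sigma_n^2$, and since $R(Y^n)\le 2\max_i|Y^n_i|$ we get $\E{R(Y^n)^2}\le C''\sigma_n^2$; Markov's inequality gives (i).

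For (ii) I would split according to the size of the largest jump. Since consecutive partial sums differ by a single increment, $R(Y^n)\ge\max_{1\le i\le n}|Y^n_i-Y^n_{i-1}|=\max_{1\le j\le n}|b^n_j|$ deterministically. Fix a threshold $\theta\in(0,1)$. If $\max_j|b^n_j|\ge\theta\sigma_n$, then $\P\big(R(Y^n)\le a\sigma_n\big)=0$ for every $a<\theta$. Otherwise every jump satisfies $|b^n_j|<\theta\sigma_n$, and here I would invoke a small-ball (anticoncentration) estimate for the midpoint value $Y^n_m$, $m=\lfloor n/2\rfloor$: in this small-jump regime, $\P\big(|Y^n_m|\le t\sigma_n\big)\le C_0(t+\theta)$ for all $t>0$, with $C_0$ absolute. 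Since $R(Y^n)\ge|Y^n_m-Y^n_0|=|Y^n_m|$, this bounds $\P\big(R(Y^n)\le t\sigma_n\big)$ by $C_0(t+\theta)$ in the small-jump case. Given $\varepsilon>0$, taking $\theta=t=\varepsilon/(2C_0)$ and $a=\min(\theta,t)$, the two cases together give $\sup_n\P\big(R(Y^n)\le a\sigma_n\big)\le\varepsilon$, which is (ii). Together with (i), this proves the theorem.

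The main obstacle is precisely the small-ball estimate $\P\big(|Y^n_m|\le t\sigma_n\big)\le C_0(t+\theta)$ for a sum of $m\asymp n$ values, each of size at most $\theta\sigma_n$, sampled without replacement from a mean-zero population with $\sum_j(b^n_j)^2=\sigma_n^2$; the ``big jump'' case shows some smallness assumption is unavoidable, since if $\sigma_n^2$ is carried by a single coordinate then $Y^n_m$ has a near-atom at $0$. Unlike the rest of the argument this is not a pure second-moment computation — a single sum genuinely concentrates on a ball of mass bounded away from $0$, so one must rule out mass $1-o(1)$ there. The cleanest route is a Berry--Esseen-type bound for sampling without replacement: its Lyapunov error is $O\!\big(\sum_j|b^n_j|^3/\sigma_n^3\big)=O\!\big(\max_j|b^n_j|/\sigma_n\big)=O(\theta)$ in the small-jump regime, and comparing $Y^n_m/\sqrt{\va{Y^n_m}}$ with a standard normal (using $\va{Y^n_m}\asymp\sigma_n^2$) gives the bound. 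Alternatively one expects an elementary derivation by tensorization, splitting $[0,n]$ into $\asymp\theta^{-1}$ blocks of roughly equal length, checking (using the small-jump hypothesis, which makes the per-block sums of squares concentrate) that each block-increment has fourth moment comparable to the square of its variance and is therefore anticoncentrated with absolute constants via Paley--Zygmund, and then multiplying these estimates along a sequential revelation of the blocks — at the cost of more bookkeeping. Either way, this anticoncentration input is where the real work lies; the remaining steps are elementary.
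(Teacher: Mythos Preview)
Your outline is sound and would yield a correct proof, but it takes a genuinely different route from the paper, particularly for the lower tail. For the upper tail, you use the Doob martingale $M_i=Y^n_i/(n-i)$ where the paper uses an Etemadi-type argument; both are elementary second-moment computations and there is little to choose between them.

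The real divergence is in part~(ii). You split on the size of the largest jump and, in the small-jump regime, appeal to a pointwise anticoncentration bound for $Y^n_m$ via Berry--Esseen for sampling without replacement (or an unfleshed-out Paley--Zygmund tensorization). This works, but it imports nontrivial machinery and --- as you note yourself --- the anticoncentration step is where all the difficulty sits. The paper sidesteps this entirely with a cleaner and more elementary idea: rather than anticoncentrate $Y^n_m$ at a single time, it looks at the discrete quadratic variation
\[
Q=\sum_{i=1}^p\big(Y^n_{i\lfloor n/p\rfloor}-Y^n_{(i-1)\lfloor n/p\rfloor}\big)^2,
\]
computes $\E{Q}\ge\tfrac12\sigma_n^2$ and $\V{Q}\le 21\sigma_n^4/p$ by a direct fourth-moment calculation (using only $\mu_4\le\mu_2^2$, which holds for \emph{every} jump sequence), and applies Chebyshev. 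Since $Q\le pR(Y^n)^2$ deterministically, concentration of $Q$ forces $R(Y^n)\gtrsim\sigma_n/\sqrt{p}$ with high probability. No case split on $\max_j|b^n_j|$ is needed, no CLT-type input is invoked, and the argument delivers explicit constants (which the paper needs downstream to prove its Proposition on comparing the width with the maximal spinal weight). Your tensorization sketch is in fact groping toward exactly this: the per-block fourth-moment control you posit is what makes $\V{Q}$ small, but aggregating via $Q$ and Chebyshev is both simpler and removes the dependence on the small-jump hypothesis.
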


This result is a consequence of Hagberg~\cite[Theorem 4]{Hagberg} that asserts the following: \emph{if $|b_1^n|\geq \ldots\geq |b_n^n|$ for all $n\geq 2$ and if $(\sigma_n^{-1} b_i^n)_{n\geq i}$ converges for all $i\geq 1$, then the processes $(\sigma_n^{-1}Y_{\lfloor ns\rfloor}^n\, ;\, s\in[0,1])$ converge in distribution towards an a.s.\ non-constant càdlàg process for the Skorokhod topology on $[0,1]$}. See Billingsley~\cite[Chapter 3]{Billingsley} for information about the Skorokhod topology on $[0,1]$ and to recall that the functions giving the supremum and the infimum on $[0,1]$ are continuous for this topology. Thus, to prove Theorem~\ref{thm:bridge_range}, we may perform a standard diagonal extraction argument to show that the sequence of random variables $\sigma_n^{-1}R(Y^n)$ is relatively compact in distribution and then conclude with Prokhorov's theorem. See also Kallenberg~\cite[Theorems 27.10 and 27.14]{kallenberg2021} for a more general version of Hagberg's limit theorem where the jump sequences may be random.
\medskip

Here, instead, we are going to give an elementary proof of Theorem~\ref{thm:bridge_range} based on the method of moments that also gives us the quantitative control needed to deduce Proposition~\ref{width_vs_Luka}. We fix a jump sequence $\mathtt{b}=(b_1,\ldots,b_n)$ that sums to $0$, a uniformly random permutation $\pi$ on $[n]$, and an exchangeable bridge $Y$ with jump sequence $\mathtt{b}$ such that (\ref{exc_bridge_def}) holds. We write  $\sigma=\sqrt{\sigma^2(\mathtt{b})}$ and we want bound $\sigma^{-1}R(Y)$ from above and below. 

We first treat the upper bound, for which we express the second moment of the bridge $Y$ in terms of the sequence $\mathtt{b}$. Let $1\leq m\leq n$ be an integer. We write $Y_m=\sum_{i=1}^n b_i\I{\pi(i)\leq m}$ and recall that $(\pi(1),\ldots,\pi(n))$ is exchangeable since $\pi$ is a uniform permutation. Developing the squared sum $Y_m^2$ then yields \[\E{Y_m^2}=\P(\pi(1)\leq m)\sum_{i=1}^n b_i^2+\P(\pi(1),\pi(2)\leq m)\sum_{i\ne j} b_i b_j.\]
We see that \begin{equation}\label{eq:mu11}\sum_{i\ne j} b_i b_j=\sum_{i=1}^n b_i\left(-b_i+\sum_{i=1}^n b_i\right )=-\sum_{i=1}^n b_i^2.\end{equation}
Computing the relevant probabilities then entails the identity
\begin{equation}
\label{bridge_second_moment}
\E{Y_m^2}=\frac{m(n-m)}{n(n-1)}\sigma^2.
\end{equation}
We are now ready to prove the upper bound.

\begin{prop}
\label{bridge_range_upper}
If $n\geq 2$ then for all $\varepsilon>0$, it holds that $\P\big(\sigma^{-1}R(Y)\geq \varepsilon^{-1})\leq 12\varepsilon$.
\end{prop}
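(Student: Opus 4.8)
The plan is to prove $\E{R(Y)^2}=O(\sigma^2)$ via a maximal inequality and then deduce the tail bound by Chebyshev's inequality, with the regime of large $\varepsilon$ (where $12\varepsilon\ge 1$) handled trivially. The only nontrivial input is a maximal inequality with an $n$-free constant, and the second-moment identity \eqref{bridge_second_moment} will do the rest.

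\textbf{Step 1: a martingale attached to $Y$.} Writing $\mathcal F_m=\sigma(Y_0,\dots,Y_m)$, I would first check, using that $Y$ is obtained by sampling $\mathtt b$ without replacement and that $\sum_{i=1}^n b_i=0$, that $\E{Y_{m+1}-Y_m\mid\mathcal F_m}=-Y_m/(n-m)$ for $0\le m\le n-1$: indeed the unsampled values sum to $-Y_m$ and, conditionally on $\mathcal F_m$, are exchangeable, so the next sampled value has conditional mean $-Y_m/(n-m)$. A telescoping computation then shows that $N_m:=\tfrac{n}{n-m}\,Y_m$, $0\le m\le n-1$, is an $(\mathcal F_m)$-martingale, hence $(N_m^2)$ is a submartingale, and Doob's $L^2$ maximal inequality gives $\E{\max_{0\le m\le k}N_m^2}\le 4\,\E{N_k^2}$ for every $k\le n-1$. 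By \eqref{bridge_second_moment}, $\E{N_k^2}=\tfrac{nk}{(n-k)(n-1)}\,\sigma^2$.

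\textbf{Step 2: split at the middle and use reversal symmetry.} For $m\le n/2$ we have $\tfrac{n-m}{n}\in[\tfrac12,1]$, so $|Y_m|\le|N_m|$ there; taking $k=\lfloor n/2\rfloor$ and using $n-k\ge n/2$ and $n\ge2$ gives $\E{N_{\lfloor n/2\rfloor}^2}\le 2\sigma^2$ and hence $\E{\max_{0\le m\le\lfloor n/2\rfloor}Y_m^2}\le 8\sigma^2$. For the second half I would invoke the distributional identity $(-Y_{n-m})_{0\le m\le n}\eqdist(Y_m)_{0\le m\le n}$, which follows from replacing the uniform sampling permutation $\pi\in\cS_n$ by $i\mapsto n+1-\pi(i)$; this yields $\E{\max_{\lceil n/2\rceil\le m\le n}Y_m^2}=\E{\max_{0\le m\le\lfloor n/2\rfloor}Y_m^2}\le 8\sigma^2$. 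Since the two index ranges cover $\{0,\dots,n\}$, adding the two bounds gives $\E{\max_{0\le m\le n}Y_m^2}\le 16\sigma^2$. (Alternatively, one can avoid the symmetry step by running the backward martingale $\tfrac{n}{m}Y_m$ on the right half.)

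\textbf{Step 3: conclusion, and the main obstacle.} Since $Y_0=0$, both $\max_m Y_m$ and $-\min_m Y_m$ lie in $[0,\sqrt{\max_m Y_m^2}]$, so $R(Y)\le 2\sqrt{\max_m Y_m^2}$ and therefore $\E{R(Y)^2}\le 64\sigma^2$. Chebyshev's inequality then yields $\P(\sigma^{-1}R(Y)\ge\varepsilon^{-1})\le 64\varepsilon^2$. Finally, if $\varepsilon\le 3/16$ then $64\varepsilon^2\le 12\varepsilon$, and if $\varepsilon>3/16$ then $12\varepsilon>1$ and the inequality is vacuous; in both cases $\P(\sigma^{-1}R(Y)\ge\varepsilon^{-1})\le 12\varepsilon$. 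The one place requiring genuine care is Step 2: applying Doob directly on the full range $[0,n-1]$ forces $k=n-1$, where $\E{N_{n-1}^2}$ is of order $n\sigma^2$, which is useless; the point is that $N_m$ and $Y_m$ are comparable only on the left half, and the right half must be reduced to the left half by symmetry (or handled by a second martingale). This is the step I would think through most carefully, and it is also where the precise constants get pinned down.
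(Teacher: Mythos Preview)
Your argument is correct and actually yields the stronger bound $\P(\sigma^{-1}R(Y)\ge\varepsilon^{-1})\le 64\varepsilon^2$, from which you deduce the stated linear bound by case analysis on $\varepsilon$. The route, however, is genuinely different from the paper's. You exploit the martingale structure of sampling without replacement: $N_m=\tfrac{n}{n-m}Y_m$ is a martingale, so Doob's $L^2$ inequality together with \eqref{bridge_second_moment} controls $\E{\max_{m\le n/2}Y_m^2}$, and time-reversal handles the right half. The paper instead runs an Etemadi-type first-moment argument: it partitions on the first time $|Y_i|$ exceeds the level $\sigma/\varepsilon$, uses the identity $\E{Y_m-Y_i\mid\mathcal F_i}=-\tfrac{m-i}{n-i}Y_i$ (the same conditional-mean computation that makes your $N_m$ a martingale) to show $\E{|Y_m|}\ge\tfrac{n-m}{n}\cdot\tfrac{\sigma}{\varepsilon}\,\P(\max_{i\le m}|Y_i|\ge\sigma/\varepsilon)$, and then bounds $\E{|Y_m|}\le\sigma$ via Cauchy--Schwarz and \eqref{bridge_second_moment}. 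Both proofs share the same three ingredients---the drift formula, the second-moment identity, and the reversal symmetry---but package them differently: your approach is cleaner and gives a quadratic tail, while the paper's is slightly more elementary (it avoids invoking Doob) and lands directly on the linear bound with the constant $12$.
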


\begin{proof}
The proof is inspired by that of Etemadi's inequality (see, e.g., Billingsley~\cite[M19]{Billingsley}). Let us write $m=\lceil n/2\rceil$. Note that the events $\{|Y_i|\geq \sigma/\varepsilon\, ;\, \max_{0\leq j<i}|Y_j|<\sigma/\varepsilon\}$ for $0\leq i\leq m$ partition $\{\max_{0\leq i\leq m}|Y_i|\geq \sigma/\varepsilon\}$. It follows that
\begin{align}
\label{upper_bound_bridge1}
\P\left(\max_{0\leq i\leq m}|Y_i|\geq \sigma/\varepsilon\right)&=\sum_{i=0}^m\P\left(\max_{0\leq j<i} |Y_j|<\sigma/\varepsilon\, ;\, |Y_i|\geq \sigma/\varepsilon\right),\\
\label{upper_bound_bridge2}
\E{|Y_m|}\geq \E{|Y_m|\I{\max_{0\leq i\leq m}|Y_i|\geq \sigma/\varepsilon }}&=\sum_{i=0}^m \E{|Y_m|\I{\max_{j<i} |Y_j|<\sigma/\varepsilon\, ;\, |Y_i|\geq \sigma/\varepsilon}}.
\end{align}
Now, let us set $\sgn(x)=1$ for all $x\in[0,\infty)$ and $\sgn(x)=-1$ for all $x\in(-\infty,0)$, so that $|x|=\sgn(x)x$ for all $x\in\R$. We then observe that for any $0\le i\le m$ we have that \[|Y_m|\geq \sgn(Y_i)Y_m=\sgn(Y_i)Y_i+\sgn(Y_i)(Y_m-Y_i)=|Y_i|+\sgn(Y_i)(Y_m-Y_i),\] 
so that (\ref{upper_bound_bridge2}) and the tower principle yield that
\begin{equation}
\label{upper_bound_bridge3}
\E{|Y_m|}\geq \sum_{i=0}^m \E{\I{\max_{j<i} |Y_j|<\sigma/\varepsilon\, ;\, |Y_i|\geq \sigma/\varepsilon}\big(|Y_i|+\sgn(Y_i)\E{Y_m-Y_i\, |\, \pi(1)\,\ldots,\pi(i)}\big)}.
\end{equation}
For any $0\leq i\leq m$, we denote by $\cF_i$ the sigma-field generated by $(\pi(1),\ldots,\pi(i))$. By exchangeability, it holds for any $i+1\leq j\leq n$ that $(\pi(1),\ldots,\pi(i),\pi(j))$ has the same law as $(\pi(1),\ldots,\pi(i),\pi(i+1))$. Therefore, we have
\begin{align*}\E{Y_m-Y_i\, |\, \cF_i}&=\sum_{j=i+1}^m\E{b_{\pi(j)}\, |\, \cF_i}\\
&=(m-i)\E{b_{\pi(i+1)}\, |\, \cF_i}\\
&=\frac{m-i}{n-i}\sum_{j=i+1}^n\E{b_{\pi(j)}\, |\,\cF_i}\\
&=-\frac{m-i}{n-i}Y_i\end{align*}
where in the last line we use that $\sum_{i=1}^n b_{\pi(i)}=0$. We insert this identity into (\ref{upper_bound_bridge3}) and then use (\ref{upper_bound_bridge1}) to obtain
\[\E{|Y_m|}\geq \sum_{i=0}^m\E{\I{\max_{j<i} |Y_j|<\sigma/\varepsilon\, ;\, |Y_i|\geq \sigma/\varepsilon}\frac{n-m}{n-i}|Y_i|}\geq \frac{\sigma(n-m)}{\varepsilon n}\P\big(\max_{0\leq i\leq m}|Y_i|\geq \sigma/\varepsilon\big).\]
Then, we further bound the left-hand side with the Cauchy--Schwarz inequality and (\ref{bridge_second_moment}) to write $\E{|Y_m|}\leq \sqrt{\E{Y_m^2}}\leq \sigma$. Since $m=\lceil n/2\rceil$ and $n\geq 2$, it holds that $(n-m)/n\geq 1/3$. Thus, we have proved that $\P(\max_{0\leq i\leq m}|Y_i|\geq \sigma/\varepsilon)\leq 3\varepsilon$.

Now, let us highlight that the exchangeability implies that $(Y_n-Y_{n-i}\, ;\, 0\leq i\leq n)$ and $Y$ have the same distribution. Then, we recall that $Y_n=0$ and $2m\geq n$ to get 
\begin{align*}\P\big(\max_{0\leq i\leq n}|Y_i|\geq \sigma/\varepsilon\big)&\leq\P\big(\max_{0\leq i\leq m}|Y_i|\geq \sigma/\varepsilon\big)+\P\big(\max_{0\leq i\leq m}|Y_{n-i}|\geq \sigma/\varepsilon\big)\\
&=2\P\big(\max_{0\leq i\leq m}|Y_i|\geq \sigma/\varepsilon\big)\leq 6\varepsilon.\end{align*}
Finally, the deterministic inequality $R(Y)\leq 2\max_{0\leq i\leq n}|Y_i|$ completes the proof.
\end{proof}

Our proof for the lower bound will involve the moments of $\mathtt{b}$. For positive integers $r$ and $\alpha_1,\ldots,\alpha_r$, we adopt the following notation:
\[\mu_{\alpha_1,\ldots,\alpha_r }=\mu_{\alpha_1,\ldots,\alpha_r }(\mathtt{b})=\sum_{\substack{1\leq i_1,\ldots, i_r\leq n\\ \text{distinct}}}\:\:\prod_{j=1}^r b_{i_j}^{\alpha_j}.\]
In particular, the assumption $\sum_{i=1}^n b_i=0$ can be simply written as $\mu_1=0$. Also note that $\mu_2=\sigma^2$. The next lemma identifies that all the moments that will occur in our proof can be expressed in terms of $\mu_2$ and $\mu_4$.

\begin{lem}
\label{moment_sequence}
Under the assumption $\sum_{i=1}^n b_i=0$, the following identities hold:
\[\mu_{1,1}=-\mu_2,\quad \mu_{2,2}=\mu_2^2-\mu_4,\quad \mu_{3,1}=-\mu_4,\quad \mu_{2,1,1}=2\mu_4-\mu_2^2,\quad \mu_{1,1,1,1}=3\mu_2^2-6\mu_4.\]
Moreover, it holds that $0\leq \mu_4\leq \mu_2^2$ which leads to the following inequalities:
\[\mu_{2,2}\leq \mu_2^2,\quad \mu_{3,1}\leq 0,\quad  \mu_{2,1,1}\leq \mu_2^2,\quad  \text{ and }\quad \mu_{1,1,1,1}\leq 3\mu_2^2.\]
\end{lem}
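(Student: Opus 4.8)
The plan is to prove Lemma~\ref{moment_sequence} by a purely combinatorial accounting of how the constrained power sums $\mu_{\alpha_1,\dots,\alpha_r}$ relate to the unconstrained ones. Introduce the shorthand $p_k=\sum_{i=1}^n b_i^k$ for the ordinary power sums, so that $p_1=\mu_1=0$, $p_2=\mu_2=\sigma^2$, and $p_4=\mu_4$; note also $p_3=\mu_3$ appears transiently. The key observation is that $\mu_{\alpha_1,\dots,\alpha_r}$, the sum over \emph{distinct} tuples, can be obtained from the product $p_{\alpha_1}\cdots p_{\alpha_r}$ by inclusion–exclusion over the set partitions of $\{1,\dots,r\}$ recording which indices coincide. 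Concretely, $p_{\alpha_1}\cdots p_{\alpha_r}=\sum_{\text{partitions }P}\mu_{P}$ where $\mu_P$ groups the exponents according to the blocks of $P$, and inverting this triangular system (via Möbius inversion on the partition lattice) expresses each $\mu_{\alpha_1,\dots,\alpha_r}$ as an explicit signed combination of products of $p_k$'s. This is completely standard, but since we only need five low-order cases it is cleaner to just carry out the elementary expansions directly.

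Then I would verify each identity in turn. For $\mu_{1,1}$: $0=p_1^2=\sum_{i,j}b_ib_j=\sum_i b_i^2+\sum_{i\ne j}b_ib_j=p_2+\mu_{1,1}$, giving $\mu_{1,1}=-p_2=-\mu_2$ (this is exactly \eqref{eq:mu11}). For $\mu_{3,1}$: $0=p_3p_1=\sum_{i,j}b_i^3b_j=\sum_i b_i^4+\sum_{i\ne j}b_i^3b_j=p_4+\mu_{3,1}$, so $\mu_{3,1}=-p_4=-\mu_4$. For $\mu_{2,2}$: $p_2^2=\sum_{i,j}b_i^2b_j^2=\sum_i b_i^4+\mu_{2,2}=p_4+\mu_{2,2}$, hence $\mu_{2,2}=p_2^2-p_4=\mu_2^2-\mu_4$. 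For $\mu_{2,1,1}$: expand $0=p_2p_1^2=\sum_{i,j,k}b_i^2b_jb_k$ and sort by coincidence pattern among $i,j,k$; the terms with all three equal contribute $p_4$, those with $j=k\ne i$ contribute $\mu_{2,2}$, those with $i=j\ne k$ (and symmetrically $i=k\ne j$) contribute $\mu_{3,1}$ each, and the all-distinct part is $\mu_{2,1,1}$, so $0=p_4+\mu_{2,2}+2\mu_{3,1}+\mu_{2,1,1}$; substituting the already-proven values gives $\mu_{2,1,1}=-p_4-(\mu_2^2-\mu_4)-2(-\mu_4)=2\mu_4-\mu_2^2$. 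For $\mu_{1,1,1,1}$: similarly expand $0=p_1^4=\sum_{i,j,k,l}b_ib_jb_kb_l$ and sort by the set partition of $\{i,j,k,l\}$ into equality classes; counting the number of ordered tuples realising each partition type (one block: $p_4$; one pair of size $2$ and two singletons: $\binom{4}{2}=6$ arrangements of the form contributing $\mu_{2,1,1}$, but here one must be careful and instead track it as $\mu_{2,1,1}$ with coefficient $6$; two blocks of size $2$: $3$ arrangements giving $\mu_{2,2}$; one block of size $3$ and one singleton: $4$ arrangements giving $\mu_{3,1}$; all distinct: $\mu_{1,1,1,1}$) yields $0=p_4+6\mu_{2,1,1}+3\mu_{2,2}+4\mu_{3,1}+\mu_{1,1,1,1}$; plugging in gives $\mu_{1,1,1,1}=-p_4-6(2\mu_4-\mu_2^2)-3(\mu_2^2-\mu_4)-4(-\mu_4)=3\mu_2^2-6\mu_4$.

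Finally, for the inequalities, the bound $\mu_4\ge 0$ is immediate since $\mu_4=\sum_i b_i^4\ge0$, and $\mu_4\le\mu_2^2$ follows from $\mu_4=p_4\le p_2^2=\mu_2^2$ because $\sum_i b_i^4\le(\sum_i b_i^2)^2$ (each cross term $b_i^2b_j^2$ with $i\ne j$ being non-negative), or equivalently from the Cauchy–Schwarz / power-mean inequality. The four displayed inequalities then drop out: $\mu_{2,2}=\mu_2^2-\mu_4\le\mu_2^2$; $\mu_{3,1}=-\mu_4\le0$; $\mu_{2,1,1}=2\mu_4-\mu_2^2\le 2\mu_2^2-\mu_2^2=\mu_2^2$; and $\mu_{1,1,1,1}=3\mu_2^2-6\mu_4\le 3\mu_2^2$.

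I do not anticipate a genuine obstacle here — the lemma is bookkeeping. The one place to be careful is getting the combinatorial multiplicities right in the $p_1^4$ expansion (the coefficient $6$ on $\mu_{2,1,1}$, $3$ on $\mu_{2,2}$, $4$ on $\mu_{3,1}$), which is exactly the statement that the number of ordered set partitions of $\{1,2,3,4\}$ into blocks of sizes $(2,1,1)$, $(2,2)$, $(3,1)$ is $6,3,4$ respectively; double-checking that $1+6+3+4+1=15$ does \emph{not} hold (it is $15$, but $p_1^4$ has $n^4$ terms, not a fixed count) — rather the check is that the Bell-number identity $B_4=15$ accounts for all partition types: sizes $(4),(3,1),(2,2),(2,1,1),(1,1,1,1)$ occur with $1,4,3,6,1$ set partitions, summing to $15$. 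That is the only spot worth re-verifying; everything else is a two-line computation.
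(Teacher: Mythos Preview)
Your proof is correct and follows essentially the same elementary approach as the paper: both compute the $\mu_{\alpha_1,\dots,\alpha_r}$ by relating constrained sums to unconstrained ones via coincidence patterns, with the paper peeling off one index at a time (e.g.\ writing $\mu_{2,1,1}=\sum_{i\ne j}b_i^2 b_j(\mu_1-b_i-b_j)$) while you expand products $p_{\alpha_1}\cdots p_{\alpha_r}$ and sort by set partitions. The inequalities are handled identically.
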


\begin{proof}
The first equality is proved in \eqref{eq:mu11}. For the second equality, we write the sum in the definition of $\mu_{3,1}$ as a double sum and find $\mu_{3,1}=\sum_{i=1}^n b_i^3(\mu_1-b_i)=-\mu_4$. Similarly, for the fourth equality, we see that $\mu_{2,2}=\sum_{i=1}^n b_i^2(\mu_2-b_i^2)=\mu_2^2-\mu_4$. Now, for the fourth equality, we separate the sum indexed by $(i,j,k)$ into two sums, one indexed by $(i,j)$ and the other by $k$, to find
\[\mu_{2,1,1}=\sum_{\substack{1\leq i,j\leq n\\ i\neq j}} b_i^2 b_j(\mu_1-b_i-b_j)=-\mu_{3,1}-\mu_{2,2}=\mu_4+\mu_4-\mu_2^2.\]
The same method gives 
\[\mu_{1,1,1,1}=\sum_{\substack{1\leq i,j,k\leq n\\ \text{distinct}}}b_i b_j b_k(\mu_1-b_i-b_j-b_k).\]
By symmetry, this becomes $\mu_{1,1,1,1}=-3\mu_{2,1,1}=3\mu_2^2-6\mu_4$, which gives the fifth equality of the lemma.

The $b_j^2$ are all non-negative so $b_i^2\leq \mu_2$ for any $1\leq i\leq n$, and then $0\leq \mu_4\leq \mu_2\sum_{i=1}^n b_i^2=\mu_2^2$. The other inequalities readily follow.
\end{proof}

In \eqref{bridge_second_moment} we expressed the second moment of the bridge $Y$ in terms of the moments of the sequence $\mathtt{b}$. We now do the same for the other moments of the bridge that we need in the proof for the lower bound on $\sigma^{-1}R(Y)$. Let $1\leq m\leq n$ be an integer. Recall that $Y_m=\sum_{i=1}^n b_i\I{\pi(i)\leq m}$ and that $(\pi(1),\ldots,\pi(n))$ is exchangeable. In particular, we simply get $\E{Y_m}=\mu_1\P(\pi(1)\leq m)=0$. 
Similarly, developing $(\sum_{i=1}^n b_i\I{\pi(i)\leq m})^4$ gives
\begin{multline*}
\E{Y_m^4}=\mu_4\P(\pi(1)\leq m)+4\mu_{3,1}\P(\pi(1),\pi(2)\leq m)+3\mu_{2,2}\P(\pi(1),\pi(2)\leq m)\\
+6\mu_{2,1,1}\P(\pi(1),\pi(2),\pi(3)\leq m)+\mu_{1,1,1,1}\P(\pi(1),\pi(2),\pi(3),\pi(4)\leq m).
\end{multline*}
Except for the first term, we can bound the relevant probabilities by $\P(\pi(1),\pi(2)\leq m)=\frac{m(m-1)}{n(n-1)}$. In addition, we use the inequalities given by Lemma~\ref{moment_sequence} so that
\begin{equation}
\label{bridge_fourth_moment}
\E{Y_m^4}\leq \frac{m}{n}\mu_4+12\frac{m(m-1)}{n(n-1)}\mu_2^2.
\end{equation}
Now, let us assume that $2m\leq n$. We use the identity $Y_{2m}-Y_m=\sum_{i=1}^n b_i\I{m<\pi(i)\leq 2m}$ to develop $Y_m^2(Y_{2m}-Y_m)^2$ and then get that $\E{Y_m^2(Y_{2m}-Y_m)^2}$ is equal to
\begin{multline*}
\mu_{2,2}\P(\pi(1)\leq m\, ;\, m<\pi(3)\leq 2m)+2\mu_{2,1,1}\P(\pi(1)\leq m\, ;\, m<\pi(3),\pi(4)\leq 2m)\\
+\mu_{1,1,1,1}\P(\pi(1),\pi(2)\leq m\, ;\, m<\pi(3),\pi(4)\leq 2m).
\end{multline*}
Once again, we apply Lemma~\ref{moment_sequence} to obtain a moment inequality:
\begin{equation}
\label{bridge_product_second_moment}
\E{Y_m^2(Y_{2m}-Y_m)^2}\leq \frac{m^2}{n(n-1)}(\mu_2^2-\mu_4)+5\frac{m^2(m-1)}{n(n-1)(n-2)}\mu_2^2.
\end{equation}

We are now ready to bound $\sigma^{-1}R(Y)$ from below.

\begin{prop}
\label{bridge_range_lower}
Let $p\! \geq\! 1$ be an integer. If $n\! \geq\! p^2$ then $\P\big(2\sigma^{-1}R(Y)\leq p^{-1/2})\leq 400 p^{-1}$.
\end{prop}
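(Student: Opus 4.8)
\textbf{Proof proposal for Proposition~\ref{bridge_range_lower}.}

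The plan is to bound the \emph{lower} tail of the range by comparing it to a second-moment estimate built from many disjoint increments of the bridge. First I would dispose of the trivial case $p=1$ (where $Y_n=0$ forces $\E{Y_m^4}$-type quantities to vanish and the claimed bound $400$ exceeds $1$), so assume $p\geq 2$. Set $m=\lfloor n/p\rfloor$; since $n\geq p^2$ we have $m\geq p\geq 2$ and $pm\leq n$, and the increments over consecutive blocks,
\[
\Delta_k \;=\; Y_{km}-Y_{(k-1)m},\qquad k=1,\dots,p,
\]
are, by the exchangeability of the increments of $Y$ (see \eqref{exc_bridge_def}), each distributed as $Y_m$, and any pair $(\Delta_k,\Delta_l)$ with $k\neq l$ is distributed as $(Y_m,Y_{2m}-Y_m)$. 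The key deterministic observation is that $R(Y)\geq |Y_{km}-Y_{(k-1)m}|=|\Delta_k|$ for every $k$, so that, writing $t=\tfrac12\sigma p^{-1/2}$,
\[
\p{2\sigma^{-1}R(Y)\leq p^{-1/2}} \;=\; \p{R(Y)\leq t}\;\leq\; \p{|\Delta_k|\leq t \text{ for all }k=1,\dots,p}\;=\;\p{N=0},
\]
where $N=\sum_{k=1}^p \I{|\Delta_k|>t}$. It then suffices to show $\p{N=0}\leq 400/p$ via $\p{N=0}\leq \va{N}/(\E N)^2$.

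For the lower bound on $\E N=p\,\p{|Y_m|>t}$, I would use \eqref{bridge_second_moment} to get $\E{Y_m^2}=\tfrac{m(n-m)}{n(n-1)}\sigma^2$, which (using $n\geq p^2$, $p\geq 2$) is of order $\sigma^2/p$ and comfortably larger than $t^2$, together with the fourth-moment bound \eqref{bridge_fourth_moment} and the inequalities of Lemma~\ref{moment_sequence}, to run a Paley--Zygmund argument: $\p{Y_m^2>t^2}\geq (1-t^2/\E{Y_m^2})^2\,(\E{Y_m^2})^2/\E{Y_m^4}$. The delicate point here is that the term $\tfrac{m}{n}\mu_4$ in \eqref{bridge_fourth_moment} is harmless only when $\mathtt{b}$ has no atypically large entry; I expect the clean way to handle this is a dichotomy according to whether $\mu_4\leq \mu_2^2/p$ or not. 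In the first (``spread-out'') case the fourth moment of $Y_m$ is comparable to $(\E{Y_m^2})^2$ and Paley--Zygmund gives $\p{|Y_m|>t}\geq c$ for an absolute constant $c$; in the second case there is a single coordinate $i^\ast$ with $b_{i^\ast}^2>\sigma^2/p\geq 4t^2$, and conditioning on the block in which $i^\ast$ lands (an event of probability $m/n\asymp 1/p$) together with the already-established bound on the rest of that block still forces $\p{|\Delta_k|>t}\geq c$ for \emph{that} $k$, hence $\E N\geq c$. Either way one obtains $\E N\geq c\,p$ for an explicit $c$, which controls the $1/\E N$ contribution to $\va{N}/(\E N)^2$.

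For $\va{N}$, I would expand $\E{N^2}=\E N+\sum_{k\neq l}\p{|\Delta_k|>t,\ |\Delta_l|>t}$ and subtract $(\E N)^2$, so that the cross term equals $\sum_{k\neq l}\mathrm{Cov}\big(\I{|\Delta_k|>t},\I{|\Delta_l|>t}\big)$. Here \eqref{bridge_product_second_moment} is exactly what is needed: by Markov, $\p{|\Delta_k|>t,\ |\Delta_l|>t}\leq t^{-4}\,\E{\Delta_k^2\Delta_l^2}=t^{-4}\,\E{Y_m^2(Y_{2m}-Y_m)^2}$, and I would combine this with the product $\E{Y_m^2}^2=\big(\tfrac{m(n-m)}{n(n-1)}\big)^2\sigma^4$ (from \eqref{bridge_second_moment}) to extract the decay: the point is that $\E{Y_m^2(Y_{2m}-Y_m)^2}-\E{Y_m^2}^2$ is, after cancellation, of smaller order than $\E{Y_m^2}^2$ by a factor $O(m/n)=O(1/p)$, using $\mu_4\geq 0$ and the explicit formulas of Lemma~\ref{moment_sequence}. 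This extra $1/p$ is precisely what turns the crude ``constant'' bound into the required $\sum_{k\neq l}\mathrm{Cov}\lesssim p$, so that $\va{N}\leq \E N+\sum_{k\neq l}\mathrm{Cov}\lesssim p$ and $\va{N}/(\E N)^2\lesssim 1/p$. Tracking all absolute constants (the $1/4$ in $t$, the choice $m=\lfloor n/p\rfloor$, the factors from Lemma~\ref{moment_sequence} and Paley--Zygmund) should yield the bound $400/p$. The main obstacle I anticipate is exactly this last step: getting genuine cancellation in the covariance rather than the lossy Markov bound on the joint probability alone, since the latter only gives a constant and not the $1/p$ decay; making the dichotomy on $\mu_4$ interact cleanly with both the lower bound on $\E N$ and this covariance estimate is where the care is needed.
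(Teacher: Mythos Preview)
Your approach is different from the paper's and has a genuine gap exactly where you flag it. You try a second-moment method on the \emph{count} $N=\sum_{k=1}^p\I{|\Delta_k|>t}$, which forces you to control $\mathrm{Cov}\big(\I{|\Delta_k|>t},\I{|\Delta_l|>t}\big)$. But the only tool you have toward the joint probability is Markov against $\E{\Delta_k^2\Delta_l^2}$, and with $t^4=\sigma^4/(16p^2)$ and \eqref{bridge_product_second_moment} of order $\sigma^4/p^2$, this gives a bound exceeding $1$; the ``cancellation'' you hope for, $\E{\Delta_k^2\Delta_l^2}-\E{\Delta_k^2}\E{\Delta_l^2}=O(\sigma^4/p^3)$, is a statement about \emph{squares}, not \emph{indicators}, and there is no mechanism here to transfer it. So the variance step does not close. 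There is also a slip in your dichotomy: in the large-$\mu_4$ case your argument only yields $\E N\geq c$, not $\E N\geq cp$, since the big jump $b_{i^\ast}$ sits in a single block. (That case is in fact trivial for a different reason: if $b_{i^\ast}^2>\sigma^2/p$ then $R(Y)\geq |b_{i^\ast}|>\sigma/\sqrt{p}$ deterministically, so the probability in question is zero.)

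The paper sidesteps both problems by running Chebyshev on the \emph{quadratic variation} $Q=\sum_{k=1}^p\Delta_k^2$ rather than on the indicator count. The point is that $\E{Q}$ and $\V{Q}$ are expressible directly via \eqref{bridge_second_moment}, \eqref{bridge_fourth_moment}, \eqref{bridge_product_second_moment}: one gets $\E{Q}\geq\sigma^2/2$ and $\V{Q}\leq 21\sigma^4/p$ with no dichotomy on $\mu_4$ and no indicator covariance to control. Then the deterministic bound $Q\leq pR(Y)^2$ converts $\{2\sigma^{-1}R(Y)\leq p^{-1/2}\}$ into $\{Q\leq\sigma^2/4\}\subset\{|Q-\E Q|\geq\sigma^2/4\}$, and Chebyshev finishes. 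The moral is that the moment identities you already have are perfectly adapted to a second-moment method on $Q$, whereas passing to indicators throws away exactly the structure (the $O(1/p)$ cancellation in the \emph{second moments} of the cross term) that you need.
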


\begin{proof}
We can assume that $p\geq 4$ because otherwise, $400 p^{-1}\geq 1$ and the desired result becomes obvious. Let us set $m=\lfloor n/p\rfloor$ and $Q=\sum_{i=1}^p(Y_{i\lfloor n/p\rfloor}-Y_{(i-1)\lfloor n/p\rfloor})^2$. We begin by computing the mean and the variance of $Q$. Using the linearity of expectation and the exchangeability of $(Y_1,Y_2-Y_1,\ldots,Y_n-Y_{n-1})$, we obtain
\[\E{Q}=p\E{Y_m^2}\quad\text{ and }\quad \V{Q}=p\E{Y_m^4}+p(p-1)\E{Y_m^2(Y_{2m}-Y_m)^2}-p^2\E{Y_m^2}^2.\]
By inserting the basic inequality $n-p\leq pm\leq n$ into the formula (\ref{bridge_second_moment}), we first find that $n\E{Q}\geq (n-p)(1-1/p)\sigma^2$. Then, the assumptions $n\geq p^2$ and $p\geq 4$ yield that
\begin{equation}
\label{quad_var_mean}
\E{Q}\geq (1-2/p)\sigma^2\geq\tfrac{1}{2}\sigma^2.
\end{equation}
Secondly, we bound the terms involved in the expression of $\V{Q}$ respectively thanks to (\ref{bridge_fourth_moment}), (\ref{bridge_product_second_moment}), and (\ref{quad_var_mean}). Still applying $pm\leq n$, we thereby find that 
\[\V{Q}\leq \mu_4\: + \: 12\tfrac{n-p}{n-1}\mu_2^2/p \: +\: \tfrac{n(p-1)}{(n-1)p}(\mu_2^2-\mu_4) \: +\: 5\tfrac{n-p}{n-2}\tfrac{n(p-1)}{(n-1)p}\mu_2^2/p\: -\: (1-2/p)^2\sigma^4.\]
Recall that $\sigma^2=\mu_2$ and $p\geq 2$, and observe that $n\geq p$ implies that $n(p-1)\leq (n-1)p$. We also apply the elementary inequality $(1-x)^2\geq 1-2x$ with $x=2/p$. Finally, we show that
\begin{equation}
\label{quad_var_variance}
\V{Q}\leq 21\sigma^4/p.
\end{equation}

The rest of the proof relies on the observation that $Q\leq pR(Y)^2$. Thus, if $2\sqrt{p}R(Y)\leq \sigma$ then $Q\leq \sigma^2/4$. In this case, (\ref{quad_var_mean}) further entails that $|Q-\E{Q}|\geq \sigma^2/4$. We conclude thanks to Chebyshev's inequality and (\ref{quad_var_variance}) by writing
\[\P\big(\sigma^{-1}R(Y)\leq \tfrac{1}{2\sqrt{p}}\big)\leq \P\big(|Q-\E{Q}|\geq \sigma^2/4\big)\leq \frac{16\V{Q}}{\sigma^4}\leq 400/p.\qedhere\]
\end{proof}

\begin{proof}[Proof of Theorem~\ref{thm:bridge_range}]
The theorem directly follows from Propositions~\ref{bridge_range_upper} and \ref{bridge_range_lower}.
\end{proof}

\subsection{Comparison of the width and the maximum spinal weight}

Here, we combine the distributional identity given by Proposition~\ref{prop:law_bf_df} with the tails bounds of Propositions~\ref{bridge_range_upper} and \ref{bridge_range_lower} to prove Proposition~\ref{width_vs_Luka}. Recall that $\nseq=(n_i)_{i\geq 0}$ is a type sequence with size $n\geq 3$ and that $T$ is uniformly distributed on $\cT(\nseq)$. Let $\varepsilon>0$ with $\varepsilon^{4/3}\sqrt{n_0-1}\geq 2^6$.

\begin{proof}[Proof of Proposition~\ref{width_vs_Luka}]

We can assume that $\varepsilon\leq 1/4$ because we would have $\sqrt[3]{2^{22}}\varepsilon^{2/3}\geq 1$ otherwise, which is an upper bound for any probability. Now, recall from Proposition~\ref{prop:bf_df_props} $(iii)$ and $(iv)$ that $\max X^{\dfs}(T)=\max_{u\in T}\rS_u^{\rd}(T)$ and that $\W(T)\le  1+ \max X^{\bfs}(T)$. Next, let $\mathtt{b}=(b_1,\dots, b_n)$ be a jump sequence corresponding to $\nseq$, as defined in Section~\ref{subsec:vervaat}, and let $Y=(Y_i,0\leq i\leq n)$ be an exchangeable bridge with jump sequence $\mathtt{b}$, as in (\ref{exc_bridge_def}). We then recall from Proposition~\ref{prop:law_bf_df} that $X^{\bfs}(T)$ and $X^{\dfs}(T)$ are both distributed as $V(Y)$ (the Vervaat transform of $Y$). From the expressions (\ref{vervaat_def}) and (\ref{range_def}), we see that the range is invariant under taking the Vervaat transform, so that $R(Y)\eqdist R(X^{\bfs}(T))=1+\max X^{\bfs}(T)$ and similarly $R(Y)\eqdist 1+\max X^{\dfs}(T)$. Therefore, for any $r\geq 1$, we can write
\begin{equation} 
\label{Wd-vs-MaxX_step1}
\P\big(\varepsilon\W(T)\geq \max X^{\dfs}(T)\big)\leq \P\big(\varepsilon R(Y)\geq r\big)+\P\big(r\geq R(Y)-1\big).
\end{equation}

We now want to use Propositions~\ref{bridge_range_upper} and \ref{bridge_range_lower}, but they only apply to jump sequences that sum to $0$. Thus, we consider $\tilde{\mathtt{b}}=(b_1+\tfrac{1}{n},\dots, b_n+\tfrac{1}{n})$ instead, and we observe that $\tilde{Y}=(Y_i+\tfrac{i}{n},0\leq i\leq n)$ is an exchangeable bridge with jump sequence $\tilde{\mathtt{b}}$. Then, we note that $|R(\tilde{Y})-R(Y)|\leq 1$ and denote by $\tilde{\sigma}=\sqrt{\sum_{i=1}^n (b_i+\tfrac{1}{n})^2}$ the standard deviation of $\tilde{b}$. We obtain from (\ref{Wd-vs-MaxX_step1}) that for any $r\geq 1$,
\[\P\big(\varepsilon\W(T)\geq \max_{u\in T} \rS_u^{\rd}(T)\big)\leq \P\big( \tilde{\sigma}^{-1}R(\tilde{Y})\geq \tilde{\sigma}^{-1}(r/\varepsilon-1)\big)+\P\big(2\tilde{\sigma}^{-1}R(\tilde{Y})\leq 2\tilde{\sigma}^{-1}(r+2)\big).\]

For the rest of the proof, we choose and fix $r=2^{-7/3}\tilde{\sigma}\varepsilon^{1/3}$. By computing that \[\tilde{\sigma}^2=\sum_{i=1}^n b_i^2-\tfrac{1}{n}=\sum_{i\geq 0}(i-1)^2 n_i -\tfrac{1}{n}\geq n_0-1,\]
the assumed inequalities $\varepsilon\leq 1/4$ and $\varepsilon^{4/3}\sqrt{ n_0-1}\geq 2^6$ imply that $r\geq 4\varepsilon$, and so that $r/\varepsilon-1\geq \tfrac{3r}{4\varepsilon}$. Thus, Proposition~\ref{bridge_range_upper} yields that
\begin{equation}
\label{Wd-vs-MaxX_step3}
\P\big( \tilde{\sigma}^{-1}R(\tilde{Y})\geq \tilde{\sigma}^{-1}(r/\varepsilon-1)\big)\leq 12\cdot\tfrac{4\varepsilon}{3r}\tilde{\sigma}=2^{19/3}\varepsilon^{2/3}.
\end{equation}
Still relying on $\tilde{\sigma}\geq\sqrt{ n_0-1}$, we also derive from the assumptions that $r+2\leq \tfrac{32}{30}r$. Next, we write $\delta=\tfrac{32r}{15\tilde\sigma}=\frac{2^{8/3}}{15}\varepsilon^{1/3}$ and set $p=\lfloor \delta^{-2}\rfloor$ to lighten the notation. We have $\delta\leq 1/3$ and then $\delta^{-2}\geq p\geq \tfrac{8}{9}\delta^{-2}$. In particular, the integer $p$ satisfies $p^2\leq 15^4 2^{-32/3}\varepsilon^{-4/3}\leq 2^6\varepsilon^{-4/3}$ which is by assumption at most $\sqrt{ n_0-1}\leq n$. Hence, we can apply Proposition~\ref{bridge_range_lower} to get
\begin{equation}
\label{Wd-vs-MaxX_step4}
\P\big(2\tilde{\sigma}^{-1}R(\tilde{Y})\leq 2\tilde{\sigma}^{-1}(r+2)\big)\leq \P\big(2\tilde{\sigma}^{-1} R(\tilde{Y})\leq \delta\big)\leq 400\cdot \tfrac{9}{8}\delta^2=2^{19/3}\varepsilon^{2/3}.
\end{equation}
Gathering (\ref{Wd-vs-MaxX_step3}) and (\ref{Wd-vs-MaxX_step4}) gives the desired result.
\end{proof}

\begin{remark}
The same arguments can be used to prove the following. For any $n\geq 1$, let $\nseq^n$ be a type sequence size $n$ such that $ n_0\geq 2$, let $\mathtt{b}^n=(b^n_1,\dots, b^n_n)$ be a jump sequence corresponding to $\nseq^n$ and let $\sigma_n=\sqrt{\sum_{i=1}^n (b^n_i)^2}$. If $T_n$ are uniformly random trees with respective type $\nseq^n$, then $\left(\sigma^{-1}_n\W(T_n)\right)_{n\ge 1}$ and $\left(\sigma^{-1}_n\max X^\dfs(T_n)\right)_{n\ge 1}$ are tight in $(0,\infty)$.
\end{remark}

\section{The height of the trees stemming from the spine}
\label{sec:height_spine}

In this section, we show Proposition~\ref{bound_product_right-weight-height}, which is a key ingredient of the proof of Theorem~\ref{thm:nseq}. In this regard, we want to uniformly control the height of subtrees stemming from the right of any spine of a uniformly random tree with fixed type. First, let us introduce some notation to denote these subtrees.

\begin{figure}
\centering
\includegraphics[page=1,scale=0.5]{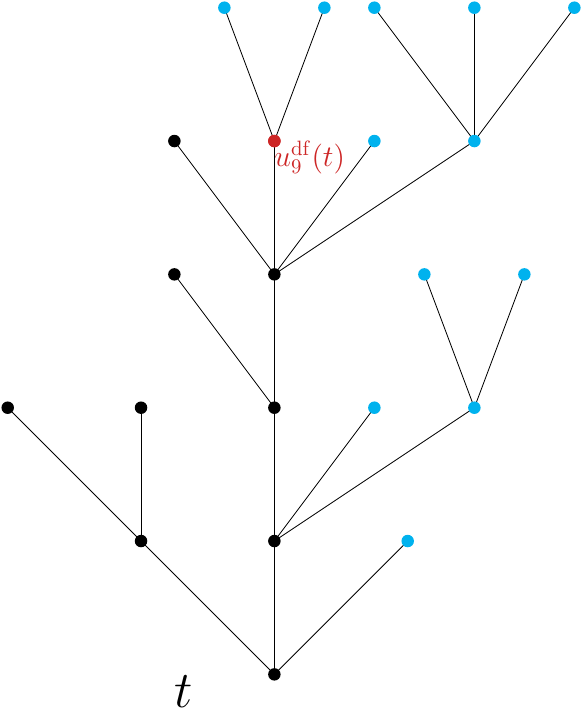}
\hspace{1em}
\includegraphics[page=2,scale=0.5]{spine_trees}
\caption{\label{fig:spine_trees} A depiction of a tree $t$ and the tree $t^{(9)}$ that is obtained by taking all subtrees stemming from $u^{\dfs}_9(t)$ and from younger siblings of  $u^{\dfs}_9(t)$ itself and its ancestors and attaching these trees to a root vertex.}
\end{figure}

Let $n\ge 1$, let $t$ be a tree of size $n$, let $0\le k \le n$. Recall from Section~\ref{sec:depth_breadth} that $X^{\dfs}(t)$ stands for the depth-first walk of $t$. We define $t^{(k)}$ as the tree whose depth-first walk $X^{\dfs}(t^{(k)})$ is $(0,X^\dfs_{k}(t),X^\dfs_{k+1}(t),\dots, X^\dfs_{n}(t))$ so that $t^{(k)}$ has size $1\le n-k+1 \le n+1$. See Figure~\ref{fig:spine_trees}. Then, for $r=X^\dfs_{k}(t)+1$, the root in $t^{(k)}$ has degree $r$ and $\theta_1 t^{(k)}, \dots,\theta_r t^{(k)} $ are the subtrees of $t$ stemming from $u^{\dfs}_k(t)$ and from the younger siblings of $u^{\dfs}_k(t)$ itself and its ancestors, that we denote by $u_k^{\dfs}(t)=v_1< \ldots< v_r$. Thus, $\H(t^{(k)})-1$ is the maximal height of any such subtree. For any $2\leq i\leq r$, if $v\in\theta_i t^{(k)}$ then we see that the most recent common ancestor of $u_k^{\dfs}(t)$ and $v_i*v$ is the parent of $v_i$, so that $1+|v|=|v_i*v|-|u_k^{\dfs}(t)\wedge(v_i*v)|$. Similarly, if $v\in \theta_1t^{(k)}$ then $|v|=|u_k^{\dfs}(t)*v|-|u_k^{\dfs}(t)\wedge (u_k^{\dfs}(t)*v)|$. Therefore, it holds that
\begin{equation}
\label{height_forest_right-spinal}
\H(t^{(k)})=\max_{v\in t, u\leq v}|v|-|u\wedge v|+\I{u\preceq v},\quad\text{ where }u=u_k^{\dfs}(t).
\end{equation}
Most of this section is dedicated to proving the following lemma. 

\begin{lem}\label{lem:height_forest}
Let $\nseq=(n_i)_{i\geq 0}$ be a type sequence of size $n\geq 3$, and let $T$ be a uniformly random tree with type $\nseq$. Then, for any $(x_0,\dots, x_k)$ in the support of $(X^\dfs_i(T), 0\le i \le k)$, it holds that for all $s>0$,
\[\P\big(X^\dfs_k(T)\cdot \H(T^{(k)})>sn\log n\, \big|\, X^\dfs_0(T)=x_0,\dots, X^\dfs_k(T)=x_k\big)\leq 3n^{1-s/2}.\]
\end{lem}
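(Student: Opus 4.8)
The plan is to condition on the prefix $(X^\dfs_i(T),0\le i\le k)=(x_0,\dots,x_k)$ and identify the conditional law of the remaining portion $(0,x_k,X^\dfs_{k+1}(T),\dots,X^\dfs_n(T))$, i.e.\ the depth-first walk of $T^{(k)}$. Because $T$ is uniform on $\cT(\nseq)$ and $X^\dfs$ is a bijection onto $\mathbb{W}_{\mathrm{exc}}(\nseq)$ (Lemma~\ref{lem:trees_excursions}), this conditional law is uniform over all ways to complete the excursion with the prescribed multiset of remaining increments. Concretely, the increments $x_{i}-x_{i-1}$ for $i\le k$ use up some of the degrees from $\dseq$, and what remains is a sub-multiset of degrees; the conditioned walk $X^\dfs(T^{(k)})$ is then, up to prepending a $0$, a \emph{uniform downward-skip-free excursion with that residual degree multiset} started at height $x_k$ and ending at $-1$. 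Equivalently, $T^{(k)}$ is a uniformly random \emph{forest} with a given type sequence: it has $x_k+1$ tree components (the subtrees $\theta_1 T^{(k)},\dots,\theta_{x_k+1}T^{(k)}$) glued under a common root, its number of vertices is $n-k+1$, and its type sequence $\nseq'$ is determined by $\nseq$ and the prefix. By \eqref{height_forest_right-spinal}, the quantity we must control is $X^\dfs_k(T)\cdot\H(T^{(k)})=x_k\cdot\H(T^{(k)})$, and $\H(T^{(k)})-1$ is the maximal height among the $x_k+1$ component subtrees.

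Next I would invoke the stochastic-domination principle that the paper attributes to Addario-Berry \& Donderwinkel~\cite{DonLAB24} and which is the stated engine of this section: among all type sequences for forests with a fixed number $N$ of vertices and a fixed number $C$ of components, the type sequence maximizing the law of the tallest component (in the usual stochastic order) is the one with no vertex having more than one child, i.e.\ the forest is a uniformly random collection of $C$ directed paths whose lengths sum to $N$. Applying this with $N=n-k+1-1=n-k$ non-root vertices split into $C=x_k+1$ paths, we get that $\H(T^{(k)})-1$ is stochastically dominated by the largest part of a uniformly random composition of $n-k$ into $x_k+1$ nonnegative parts (equivalently, the largest block among $x_k+1$ consecutive blocks cut out of $[n-k]$ by $x_k$ uniformly chosen bars). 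This is exactly where the $\log n$ factor enters.

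It then remains to bound, for $L$ the largest part of a uniform composition of $m:=n-k$ into $c:=x_k+1$ parts, the probability $\P(x_k\cdot(L+1)>sn\log n)$. Since a single fixed part has a hypergeometric-type tail, $\P(\text{part }\ge t)\le \binom{m-t+c-1}{c-1}/\binom{m+c-1}{c-1}\le (c/(m+c))^{?}$—more cleanly one uses $\P(\text{a given part}\ge t)\le \big(\tfrac{m}{m+c-1}\big)^{t}\le e^{-t(c-1)/(m+c)}$ or the even simpler bound $\P(\text{part}\ge t)\le \big(1-\tfrac{t}{m+1}\big)^{c-1}$ when $t\le m+1$, and then a union bound over the $c$ parts gives $\P(L\ge t)\le c\,(1-t/(m+1))^{c-1}\le c\,e^{-t(c-1)/(m+1)}$. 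Plugging $t\approx sn\log n/x_k$ (so that $x_k(t-1)>sn\log n$), using $c=x_k+1\ge x_k$, $m\le n$, and $t(c-1)/(m+1)\gtrsim (sn\log n/x_k)\cdot x_k/n=s\log n$, yields $\P(x_k\cdot\H(T^{(k)})>sn\log n)\le (x_k+1)\,n^{-s}\le n\cdot n^{-s}=n^{1-s}\le 3n^{1-s/2}$; a little care with floors, with the cases $x_k=0$ (trivial, the product is $0$) and $t>m+1$ (then the event is empty), and with absorbing lower-order slack into the constant $3$ and the exponent $s/2$, finishes the argument. The main obstacle I anticipate is making the first step fully rigorous: precisely verifying that the conditional law of $T^{(k)}$ given the prefix really is uniform over the appropriate forest type class, and that the $\cite{DonLAB24}$ domination statement applies to \emph{this} conditioned forest (with its components all hanging off one root) rather than to an unconditioned forest—handling the constraint correctly, and tracking that $x_k$ is both the (shifted) starting height of the residual excursion and the component count, is the delicate bookkeeping here; the tail estimate for compositions is routine by comparison.
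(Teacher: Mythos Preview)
Your proposal is correct and follows essentially the same route as the paper: (i) identify the conditional law of $T^{(k)}$ given the prefix as uniform over plane trees with a residual type $\nseq'$ and prescribed root degree $x_k+1$; (ii) stochastically dominate by the ``star of paths'' type via the Addario-Berry--Donderwinkel argument; (iii) bound the tail of the largest part of a uniform composition. The paper packages (i) as Proposition~\ref{prop:forest_short}, handles your anticipated obstacle---that the domination from~\cite{DonLAB24} must be adapted to \emph{plane} trees with \emph{fixed root degree}---by stating and proving this adaptation separately (Theorem~\ref{thm:stoch_ord}, proved in Appendix~\ref{app:stochdom}), and carries out (iii) exactly as you sketch (Proposition~\ref{prop:star_short}), arriving at $r(1-\tfrac{r-1}{n})^h$ and hence $3n^{1-s/2}$; your rougher exponent $n^{1-s}$ comes from parameterizing by $x_k$ rather than $r=x_k+1$, and the paper's $s/2$ absorbs the worst case $r=2$ where $(r-1)/r=1/2$.
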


Before we prove the lemma, we use it to show Proposition~\ref{bound_product_right-weight-height}.

\begin{proof}[Proof for Proposition~\ref{bound_product_right-weight-height} using Lemma \ref{lem:height_forest}]
We know from Proposition~\ref{prop:bf_df_props} $(iii)$ that if $u=u_k^{\dfs}(T)$ with $0\leq k\leq n-1$, then $\rS_u^{\rd}(T)=X_k^{\dfs}(T)$. Also recall from (\ref{height_forest_right-spinal}) that $\H(T^{(k)})=\max_{v\in T, u\leq v}|v|-|u\wedge v|+\I{u\preceq v}$ with $u=u_k^{\dfs}(T)$,. It immediately follows that $\H(T^{(k)})\geq\max_{v\in T, u\leq v}|v|-|u\wedge v| $. Therefore, by a union bound over all $u\in T$,
\[\P\Big(\max_{\substack{u,v\in T\\ u\leq v}}(|v|-|u\wedge v|)\rS_u^{\rd}(T)\geq sn\log n\Big)\leq \sum_{k=0}^{n-1} \P\big(X^\dfs_k(T)\cdot \H(T^{(k)})\geq sn\log n\big).\]
For each $0\leq k\leq n-1$, we condition $\P\big(X^\dfs_k(T)\cdot \H(T^{(k)})\geq sn\log n\big)$ on $X_0^{\dfs}(T),\ldots,X_k^{\dfs}(T)$ before applying Lemma~\ref{lem:height_forest} and the tower property to obtain the desired upper bound $n\cdot 3n^{1-s/2}=3n^{2-s/2}$.
\end{proof}

We will now prove Lemma~\ref{lem:height_forest}. Recall that, by Lemma~\ref{lem:trees_excursions}, $X^\dfs$ is a bijection from $\cT(\nseq)$ to $\mathbb{W}_{\mathrm{exc}}(\nseq)$ so that $X^\dfs(T)$ is a uniform pick from $\mathbb{W}_{\mathrm{exc}}(\nseq)$. 
For $i\ge 0$, we set 
\[n'_i=n'_i(x_0,\dots, x_k)= n_i-|\{1\le j \le k: x_j=x_{j-1}+i-1\}|+\I{i=x_k+1}\]
and we write $\nseq'=\nseq'(x_0,\dots,x_k)=(n'_i)_{i\ge 0}$, so that if $t$ has type $\nseq$ and if $X^\dfs_0(t)=x_0,\dots, X^\dfs_k(t)=x_k$ then $t^{(k)}$ has type $\nseq'$. 
We observe that, conditional on the event $\{X^\dfs_0(T)=x_0,\dots, X^\dfs_k(T)=x_k\}$,
it holds that $X^\dfs(T^{(k)})$ is a uniform pick from $\mathbb{W}_{\mathrm{exc}}(\nseq')$ conditional on the event that the first increment equals $x_k$. Thus, conditional on the event $\{X^\dfs_0(T)=x_0,\dots, X^\dfs_k(T)=x_k\}$, it holds that $T^{(k)}$ is a uniform tree with type $\nseq'$ conditional on the event that its root has degree $x_k+1$. Therefore, Lemma~\ref{lem:height_forest} is a consequence of the following proposition.

\begin{prop}\label{prop:forest_short}
Let $n\geq 3$, let $1\leq r\leq n$, and let $\nseq=( n_i)_{i\ge 0}$ be a type sequence with size at most $n+1$ and such that $ n_r\geq 1$. If $T$ is a uniformly random tree with type $\nseq$ conditioned to have $\rk_\varnothing(T)=r$, then for all $s>0$, it holds that
\[ \P(r\H(T)>sn\log n)\leq 3n^{1-s/2}.\]
\end{prop}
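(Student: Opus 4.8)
The plan is to delete the root of $T$, view what remains as a uniform plane forest, and bound the height of its tallest component by comparing with the ``most spread-out'' forest type.

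\emph{Trivial cases and reduction to a forest.} If $3n^{1-s/2}\ge 1$ there is nothing to prove, so I assume $3n^{1-s/2}<1$; since $n\ge 3$ this forces $s>2$. Every tree of type $\nseq$ has $\#T=|\nseq|\le n+1$ vertices, hence $\H(T)\le\#T-1\le n$, so if $r\le s\log n$ then $r\H(T)\le rn\le sn\log n$ and the probability is $0$. I may therefore also assume $r>s\log n$, which together with $n\ge 3$ and $s>2$ gives $r\ge 3$. Now the map $T\mapsto(\theta_1 T,\dots,\theta_r T)$ is a bijection from plane trees of type $\nseq$ with root out-degree $r$ onto ordered $r$-tuples of plane trees whose combined out-degree multiset is the type $\mathtt{m}$ obtained from $\nseq$ by deleting one vertex of out-degree $r$; since both sides carry the uniform measure, $F:=(\theta_1T,\dots,\theta_rT)$ is a uniformly random plane forest of type $\mathtt{m}$. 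It has $m:=|\nseq|-1\le n$ vertices and exactly $r$ components, so in particular $r\le m\le n$, and $\H(T)=1+\max_{1\le j\le r}\H(\theta_jT)$.

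\emph{The key comparison.} The crux is the following domination: among all forest types with $m$ vertices and $r$ components, the type $\mathtt{m}^{*}$ having exactly $r$ leaves and $m-r$ vertices of out-degree $1$ — whose uniform forest $F^{*}$ is a sequence of $r$ paths — stochastically maximises the height of the uniform forest; in particular $\max_j\H(\theta_jT)$ is stochastically dominated by $\max_j\H(F^{*}_j)$. Following \cite{DonLAB24}, I would prove this by a smoothing argument. If a forest type $\mathtt{m}$ has a vertex of out-degree $d\ge 2$ then it also has a leaf (a forest with $r$ components has at least $r\ge 1$ leaves); let $\mathtt{m}'$ be obtained from $\mathtt{m}$ by lowering this out-degree from $d$ to $d-1$ and raising one out-degree $0$ to $1$. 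Then $\mathtt{m}'$ still has $m$ vertices and $r$ components, while the potential $\sum_i m_i\binom i2$ strictly decreases, so finitely many such steps reach $\mathtt{m}^{*}$. It remains to see that one step can only increase the height tail, $\P(\H(F_{\mathtt{m}})\ge h)\le\P(\H(F_{\mathtt{m}'})\ge h)$ for all $h$. The natural device is a cut-and-regraft operation on forests of type $\mathtt{m}$ that detaches the subtree hanging from the last child of a chosen out-degree-$d$ vertex and reattaches it as the single child of a chosen leaf, producing a forest of type $\mathtt{m}'$; reattaching below a leaf of maximal depth (among those not in the detached subtree) cannot decrease the height. The delicate point is to organise this so that the resulting correspondence between suitably pointed forests of types $\mathtt{m}$ and $\mathtt{m}'$ is a genuine injection — which forces a careful choice of both the detached vertex and the reattachment leaf so that the operation can be inverted — and this is the step I expect to be the main obstacle.

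\emph{Composition tail bound and conclusion.} A uniform plane forest of type $\mathtt{m}^{*}$ is nothing but a uniformly random composition $(k_1,\dots,k_r)$ of $m$ into $r$ positive parts (the $j$-th component being a path on $k_j$ vertices), with $\max_j\H(F^{*}_j)=\max_jk_j-1$; hence $\H(T)$ is stochastically dominated by $\max_jk_j$. A union bound and symmetry then give
\[
\P\big(r\H(T)>sn\log n\big)\le\sum_{j=1}^{r}\P\!\big(k_j>sn\log n/r\big)=r\,\P\!\big(k_1>sn\log n/r\big).
\]
Since $r\le m\le n$, one has $sn\log n/r\ge s\log n>2$ (using $s>2$ and $n\ge 3$), so with $t:=\lfloor sn\log n/r\rfloor+1\ge 3$ one computes $\P(k_1\ge t)=\binom{m-t}{r-1}/\binom{m-1}{r-1}$ (read as $0$ when $t>m-r+1$); since the factors $\tfrac{m-t-i}{m-1-i}$ for $0\le i\le r-2$ are then positive and decreasing in $i$,
\[
\P(k_1\ge t)\le\Big(1-\tfrac{t-1}{m-1}\Big)^{r-1}\le\exp\!\Big(-\tfrac{(r-1)(t-1)}{m-1}\Big).
\]
Using $m\le n$, $r\le n$, $r\ge 3$ (so $\tfrac{r-1}{r}\ge\tfrac23$) and $t-1\ge sn\log n/r-1$, the exponent is at least $\tfrac23 s\log n-1$, whence $\P(k_1\ge t)\le e\,n^{-2s/3}$ and
\[
\P\big(r\H(T)>sn\log n\big)\le r\,e\,n^{-2s/3}\le e\,n^{1-2s/3}\le e\,n^{1-s/2}<3\,n^{1-s/2},
\]
where the last two inequalities use $r\le n$ and $2s/3\ge s/2$ (with $n\ge 3>1$) together with $e<3$. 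This is the claimed bound.
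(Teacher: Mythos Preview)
Your overall strategy matches the paper's exactly: reduce to the forest of subtrees at the root, argue that the height is stochastically maximised by the all-paths type, and then bound the tail of the maximum of a uniform composition. Your computation in the final paragraph is correct and is essentially the paper's Proposition~\ref{prop:star_short}.

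The genuine gap is the ``key comparison''. You correctly identify that the stochastic domination is the heart of the matter, and you correctly identify the obstacle, but you do not resolve it, and the mechanism you sketch is not the one that works. Reattaching the detached subtree at a \emph{leaf of maximal depth} does guarantee the height does not drop, but it is a tree-dependent choice and there is no reason the resulting map should be injective; conversely, any canonical choice of reattachment leaf will sometimes decrease the height. The paper (following \cite{DonLAB24}) does \emph{not} build a height-monotone injection between forests of the two types. Instead it compares the two random trees class by class: one groups together all pointed trees $(t,\{u,v\})$ that agree after removing the subtrees hanging from the children of $u$ and $v$ (this is the relation $\sim$ in Appendix~\ref{app:stochdom}), shows that $(T,\{U,V\})$ and $(T',\{U',V'\})$ land in each class with the same probability (Proposition~\ref{prop:stochdom1}, via an explicit involution $\Psi$), and then shows that \emph{conditionally on the class} the height under the more-balanced type stochastically dominates (Proposition~\ref{prop:stochdom2}). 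This last step is where the combinatorics lives: within a class the only randomness is which of the $k+\ell$ (or $k+\ell-1$) detached subtrees go to the deeper of the two marked vertices, and the ``eggs-in-one-basket'' lemma says precisely that a more balanced split $\{k,\ell\}$ stochastically increases the maximum over the set attached at the deeper vertex. So the missing idea is not a clever bijection but a conditioning that isolates a clean coupon-allocation problem.
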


This proposition follows from a stochastic domination result. Fix $1 \le r \le n$ and let $\nseq^*=\nseq^*(n,r)=(n^*_i)_{i\ge 0}$ be the type of a tree with size $n+1$ that only contains leaves, vertices with degree $1$ and one additional vertex of degree $r$, i.e.
\[n^*_i= r\I{i=0}+(n-r)\I{i=1}+\I{i = r}\]
for all $i\geq 0$. Then, the following proposition states that across types $\nseq=( n_i)_{i\ge 0}$ of trees with at most $n+1$ vertices and $ n_r\ge 1$, the type $\nseq^*$ maximises the height of a uniformly random tree with type $\nseq$ conditioned to have root degree $r$. 

\begin{prop}\label{prop:stoch_dom}
Fix  $1\le r \le n$ and let $\nseq=( n_i)_{i\ge 0}$ be a type sequence with size at most $n+1$ vertices and such that $ n_r\ge 1$. Then, for $T$ a uniform pick from $\cT(\nseq)$ conditional on $\rk_\varnothing(T)=r$ and $T^*$ a uniform pick from $\cT(\nseq^*)$ conditional on $\rk_\varnothing(T^*)=r$, it holds that $\H(T)$ is stochastically dominated by $\H(T^*)$.
\end{prop}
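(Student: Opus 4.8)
The plan is to deduce the statement from two elementary monotonicity principles for the height of uniform plane forests, combined with the cycle lemma. Conditioning $T$ on $\rk_\varnothing(T)=r$ and deleting the root identifies $T$ with a uniform plane \emph{forest} $F$ having exactly $r$ components, whose type $\widetilde\nseq$ is $\nseq$ with one vertex of out-degree $r$ removed; moreover $\H(T)=1+\H(F)$, where the height of a forest is the largest height of its components (here we use $r\ge 1$, so $F$ is nonempty). The same operation turns $T^*$ into a uniform forest $F^*$ with $r$ components, $n$ vertices, and all out-degrees in $\{0,1\}$, i.e.\ $r$ ordered paths of total length $n$. So it suffices to show that $\H(F)$ is stochastically dominated by $\H(F^*)$.

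I would obtain this by transforming $\widetilde\nseq$ into the type of $F^*$ through a finite chain of \emph{elementary moves}, each preserving the number $r$ of components and each stochastically increasing the forest height. There are two moves. \emph{Lengthening}: replace a leaf by an out-degree-$1$ vertex carrying a fresh leaf below it (so $n_0$ is unchanged while $n_1$ and the size each grow by one); iterating raises the size to $n$. \emph{Balancing}: whenever $n_d\ge 1$ for some $d\ge 2$, turn one out-degree-$d$ vertex into an out-degree-$(d-1)$ vertex and one leaf into an out-degree-$1$ vertex (so $n_d\mapsto n_d-1,\ n_{d-1}\mapsto n_{d-1}+1,\ n_0\mapsto n_0-1,\ n_1\mapsto n_1+1$, at fixed size). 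This move is always legitimate: a forest with $r$ components and a vertex of out-degree $\ge 2$ has more than $r$ leaves, so $n_0-1\ge r$; and since $\sum_{i\ge 2}(i-1)n_i$ strictly decreases by one at each application, finitely many balancing moves (performed first, then the lengthening ones, which preserve the ``all paths'' property) carry $\widetilde\nseq$ onto the type of $F^*$.

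For a single move $\mathtt m\to\mathtt m'$ of either kind, write $N(\mathtt m)$ for the number of plane forests with $r$ components and type $\mathtt m$, and $N_{\ge h}(\mathtt m)$ for those of height at least $h$. The cycle lemma gives $N(\mathtt m)=\tfrac rV\cdot\tfrac{V!}{\prod_i m_i!}$ with $V=\sum_i m_i$, so the ratio $N(\mathtt m')/N(\mathtt m)$ is an explicit product of a few of the $m_i$'s. Since both forests are uniform, $\P(\H(F_{\mathtt m})\ge h)\le\P(\H(F_{\mathtt m'})\ge h)$ is equivalent to $N_{\ge h}(\mathtt m)\,N(\mathtt m')\le N_{\ge h}(\mathtt m')\,N(\mathtt m)$; inserting the ratio turns this into an inequality between \emph{marked} counts. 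For the balancing move it reads
\begin{multline*}
\#\{(F,v,\ell):\H(F)\ge h,\ \rk_v(F)=d,\ \rk_\ell(F)=0\}\\
\le\ \#\{(F',v',\ell'):\H(F')\ge h,\ \rk_{v'}(F')=d-1,\ \rk_{\ell'}(F')=1\},
\end{multline*}
with $F$ of type $\mathtt m$ and $F'$ of type $\mathtt m'$ (and similarly, with simpler marks, for the lengthening move). I would prove such inequalities by an explicit injection: detach a carefully chosen child-subtree of the marked vertex $v$ and regraft it as the unique child of the marked leaf $\ell$, marking in the image the resulting demoted vertex and promoted leaf and recording the position the subtree occupied. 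The inverse contracts at the promoted leaf: detach its subtree and reinsert it at the recorded position.

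The main obstacle is making these injections precise, and the delicate point is the choice of which child-subtree of $v$ to relocate: it must avoid the marked leaf $\ell$ (otherwise the regraft is meaningless), and it must not push the height below $h$. The height can drop only when every vertex of height $\ge h$ lies in the moved subtree and $\ell$ lies in a different branch shallower than $v$; one shows this configuration can always be dodged --- there are at least two child-subtrees of $v$, $\ell$ lies in at most one of them, all height-$\ge h$ witnesses lie in at most one of them, and in the single remaining conflict $\ell$ turns out to be a descendant of $v$, hence strictly deeper than $v$, so regrafting only pushes the moved subtree further down. Arranging this case analysis so that the map is genuinely injective and the marking multiplicities exactly absorb the factorial ratio $N(\mathtt m')/N(\mathtt m)$ --- together with the easier degenerate cases of the lengthening move, such as components of size one and roots of components --- is where the real work lies; the remainder is bookkeeping. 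This is the adaptation of the height argument of Addario-Berry and the first author \cite{DonLAB24} referred to in Section~\ref{sec:overview}.
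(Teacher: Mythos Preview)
Your reduction matches the paper's: both reach $\nseq^*$ through a chain of elementary type-moves and argue that each move stochastically increases the height of the uniform tree with fixed root degree. Your lengthening is the paper's covering relation~(a), your balancing is relation~(b) with $k=1$, and your termination parameter $\sum_{i\ge2}(i-1)n_i$ is precisely the one used in the paper's induction. Passing to forests by deleting the root is a clean way to absorb the root-degree conditioning.

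Where you diverge from the paper is the proof that a single balancing move raises the height, and here your sketch has a concrete gap. You propose an injection from tall marked triples $(F,v,\ell)$ to tall marked triples $(F',v',\ell')$ built by detaching one child-subtree of $v$ and regrafting it at $\ell$, ``recording the position the subtree occupied''. But the target has no slot for that integer, and once the $j$-th child is removed and the remaining children re-indexed, $j$ is gone. A deterministic rule such as ``move the last child not containing $\ell$'' does not rescue injectivity: if $\ell$ lies in the $(d-1)$-th subtree of $v$ (so you move the $d$-th) or in the $d$-th (so you move the $(d-1)$-th), both produce the same $(F',v',\ell')$ with $\ell'$ in the last subtree of $v'$. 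Your height-preservation case analysis is orthogonal to this issue.

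The paper (Appendix~\ref{app:stochdom}, following~\cite{DonLAB24}) avoids the injection altogether. It groups doubly-pointed trees into equivalence classes by the data $g(t,\{u,v\})=(t_{u,v},\text{marks},\text{multiset of detached subtrees})$, builds an explicit involution $\Psi$ showing that $(T,\{U,V\})$ and $(T',\{U',V'\})$ induce the \emph{same} law on classes (Proposition~\ref{prop:stochdom1}), and then, conditionally on the class, reduces the height comparison to the ``eggs-in-one-basket'' inequality (Lemma~\ref{lem:ei}, Proposition~\ref{prop:stochdom2}) about how the maximum over a random subset of fixed numbers depends on the subset size. Separating ``same class-distribution'' from ``height comparison within a class'' is exactly what lets the argument go through without tracking which child was moved.
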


Then, Proposition~\ref{prop:forest_short} follows from Proposition~\ref{prop:stoch_dom}, that we will soon prove, and the following proposition. 

\begin{prop}\label{prop:star_short}
Let $1\le r\le n$ and let $\nseq^*=\nseq^*(n,r)$ be as above. If $T^*$ is a uniformly random pick from $\cT(\nseq^*)$ conditional on $\rk_\varnothing(T^*)=r$, then for all $s>0$, it holds that
\[ \P( r\H(T^*)> sn\log n)\leq  3n^{1-s/2}.\]
\end{prop}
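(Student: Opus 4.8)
The plan is to describe $T^*$ explicitly and then to control the largest part of a uniformly random composition. First I would note that, since $\nseq^*=\nseq^*(n,r)$ satisfies $n^*_i=0$ for every $i\notin\{0,1,r\}$ and $n^*_r=1$, when $r\ge 2$ the root of any tree in $\cT(\nseq^*)$ with out-degree $r$ must be the \emph{unique} vertex of out-degree $r$; hence each of its $r$ pendant subtrees contains only vertices of out-degree $0$ or $1$ and is therefore a bare path ending in a leaf. Such a tree is thus entirely encoded by the ordered tuple $(\ell_1,\dots,\ell_r)$, where $\ell_j$ is the number of out-degree-$1$ vertices on the $j$-th path, and this tuple ranges exactly over the compositions of $n-r$ into $r$ non-negative parts, with each composition corresponding to a single tree. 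Consequently, conditionally on $\rk_\varnothing(T^*)=r$, the tuple $(\ell_1,\dots,\ell_r)$ is uniform on those compositions and $\H(T^*)=1+\max_{1\le j\le r}\ell_j$. The case $r=1$ is degenerate: then $\nseq^*$ forces $T^*$ to be the deterministic path on $n+1$ vertices, so I would dispose of it by hand.

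Next I would estimate the tail of a single part via stars and bars: for every integer $k\ge 0$ one has $\P(\ell_1\ge k)=\binom{n-k-1}{r-1}/\binom{n-1}{r-1}$, and writing this ratio as $\prod_{i=0}^{k-1}\frac{n-r-i}{n-1-i}$ shows that (for $r\ge 2$) each factor is at most the first one, $\frac{n-r}{n-1}$, whence $\P(\ell_1\ge k)\le\big(\tfrac{n-r}{n-1}\big)^k$. A union bound over the $r$ parts then gives $\P(\max_j\ell_j\ge k)\le r\big(\tfrac{n-r}{n-1}\big)^k$.

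Finally I would assemble the bound. When $s\le 2$ we have $3n^{1-s/2}\ge 3\ge 1$ and there is nothing to prove, so I assume $s>2$; then $s\log n>2\log 3>1$, hence $sn\log n>n$. If $r=1$ this already forces $r\H(T^*)=n<sn\log n$, so the probability is $0$. For $2\le r\le n$, the event $\{r\H(T^*)>sn\log n\}$ equals $\{\max_j\ell_j>sn\log n/r-1\}$, and since $sn\log n/r-1\ge s\log n-1>0$, applying the tail bound above together with $1-x\le e^{-x}$ gives $\P(r\H(T^*)>sn\log n)\le r\exp\!\big(-\tfrac{r-1}{n-1}\big(\tfrac{sn\log n}{r}-1\big)\big)$. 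Using $\tfrac{r-1}{n-1}\cdot\tfrac{n}{r}\ge\tfrac{r-1}{r}\ge\tfrac12$ and $\tfrac{r-1}{n-1}\le 1$, the exponent is at most $-\tfrac{s}{2}\log n+1$, so the right-hand side is at most $r\,e\,n^{-s/2}\le e\,n^{1-s/2}<3n^{1-s/2}$, which is the desired inequality.

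Every step here is elementary. The only genuinely load-bearing point is the structural identification of $\cT(\nseq^*)$ (conditioned on root degree $r$) with the set of compositions of $n-r$ into $r$ parts, which requires checking the bijection with some care when $r\ge 2$ and handling $r=1$ separately; the passage from the strict inequality on $r\H(T^*)$ to an integer threshold on $\max_j\ell_j$ is a minor bookkeeping nuisance but causes no real difficulty. I do not expect any serious obstacle.
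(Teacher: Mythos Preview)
Your proof is correct and follows essentially the same approach as the paper: identify the conditioned tree with a uniform composition of $n-r$ into $r$ nonnegative parts via stars and bars, bound the tail of a single part by a product of ratios, apply a union bound, and finish with $1-x\le e^{-x}$ together with $\tfrac{r-1}{r}\ge\tfrac12$. The only slip is the inequality ``$s\log n>2\log 3$'', which as written needs $n\ge 3$; for $n=2$ one has instead $s\log n>2\log 2>1$ (and $n=1$ is trivial since $3n^{1-s/2}=3$), so the conclusion $s\log n>1$ still holds and nothing in the argument is affected.
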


\begin{proof}
We can assume that $n\ge 2$ and $s\geq 2$ because $3n^{1-s/2}\geq 1$ otherwise. Since $T^*$ has $n+1$ vertices its height is at most $n$,  so for $r=1$ we see that $\p{ \H(T^*)>sn\log n}=0$ for all $n\ge 2$ and $s\ge 2$, and we thus may assume that $r\ge 2$. 

Let $\cT^*$ denote the set of trees $t$ with type $\nseq^*$ and $\rk_\varnothing(t)=r$. Then, for $t\in \cT^*$ the trees $\theta_1 t, \dots, \theta_r t$ are paths of total length $n-r$ and in fact, $t \longmapsto (\H(\theta_1 t),\dots, \H(\theta_r t) )$ is a bijection from $\cT^*$ to the set of all weak compositions of $n-r$ into $r$ parts, which has $\binom{n-1}{r-1}$ elements  by a standard stars-and-bars argument. 
Similarly, for any integers $h\geq 0$ and $1\le k \le r$, we check that $t\longmapsto (\H(\theta_i t)-h\I{i=k},1\leq i\leq r)$ yields a bijection from $\{t\in \cT^*: \H(\theta_k t)\ge h\}$  to the set of all weak compositions of $n-r-h$ into $r$ parts, which has $\binom{n-h-1}{r-1}$ elements. Therefore,
\[\left|\{t\in \cT^*: \H(t)\geq h+1\}\right|\leq r \binom{n-h-1}{r-1}.\]
For any integer $h\geq 0$, we compute
\[\P(\H(T^*)\geq h+1)\leq r \frac{(n-1-h)!(n-r)!}{(n-1)!(n-r -h)!}=r\prod_{i=1}^{h}\left(1-\frac{r-1}{n-i} \right)\leq r\left(1-\frac{r-1}{n}\right)^{h}.\]
Thus, for $2\le r \le n$,
\[\P\big(\H(T^*)>\tfrac{sn}{r}\log n\big)\leq  r \exp\Big(\tfrac{r-1}{n}-s \tfrac{r-1}{r}\log n\Big)\leq 3n^{1-s/2}.\qedhere\]
\end{proof}

Proposition~\ref{prop:stoch_dom} is a consequence of a more general result. To be precise, there is a partial order $\preceq$ on type sequences that satisfies that, for $\nseq$ and $\nseq^*$ as in Proposition~\ref{prop:stoch_dom}, $\nseq \preceq \nseq^*$. This partial order induces a stochastic ordering on the height of uniformly random trees with a given type and fixed root degree. This is the content of the next theorem.

\begin{thm}\label{thm:stoch_ord}
Fix $r\ge 1$ and let $\preceq$ be the partial order on type sequences defined by the following covering relation: for types $\nseq'=(n'_i)_{i\ge 0}$ and $\nseq=( n_i)_{i\ge 0}$, say $\nseq'=(n'_i)_{i\ge 0}$ covers $\nseq=( n_i)_{i\ge 0}$ if either of the following two statements holds:
\smallskip
\begin{compactenum}
    \item[$(a)$] It holds that $n'_1= n_1+1$ and $n'_i= n_i$ for $i\ne 1$, or
    \item[$(b)$] There exist $k, \ell $ with $1\le k\le \ell$ such that 
    \[  n_i = n'_i -\I{i=k} +\I{i=k-1}-\I{i=\ell}+\I{i=\ell+1}.\]
\end{compactenum}
Suppose $n'_r, n_r\ge 1$, and let $T,T'$ be uniformly random picks respectively from $\cT(\nseq)$ and from $\cT(\nseq')$ conditional on $\rk_\varnothing(T)=\rk_\varnothing(T')=r$. 
If $\nseq\preceq\nseq'$ then $\H(T)$ is stochastically dominated by $\H(T')$.
\end{thm}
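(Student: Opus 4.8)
The plan is to construct, for each of the two covering relations, an explicit injection (or, better, a weight-respecting map) from trees with the smaller type to trees with the larger type that does not decrease the height, and then to upgrade this to stochastic domination by a counting argument. Since all trees of a fixed type and fixed root degree are equally likely, stochastic domination $\H(T)\preceq_{\mathrm{st}}\H(T')$ is equivalent to the statement that for every $h\ge 0$,
\[
\frac{\#\{t\in\cT(\nseq):\rk_\varnothing(t)=r,\ \H(t)\ge h\}}{\#\{t\in\cT(\nseq):\rk_\varnothing(t)=r\}}
\;\le\;
\frac{\#\{t'\in\cT(\nseq'):\rk_\varnothing(t')=r,\ \H(t')\ge h\}}{\#\{t'\in\cT(\nseq'):\rk_\varnothing(t')=r\}}.
\]
I would prove each numerator/denominator pair of inequalities is witnessed by the same combinatorial device. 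A clean way to do this is via the depth-first (\L ukasiewicz) encoding: by Lemma~\ref{lem:trees_excursions}, $\cT(\nseq)$ is in bijection with $\W_{\mathrm{exc}}(\nseq)$, so I am really comparing two sets of lattice excursions whose increment multisets differ by the prescribed local move, and the constraint $\rk_\varnothing=r$ fixes the first increment to be $r-1$.

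For case $(a)$ — adding one extra vertex of out-degree $1$ — the natural operation is to insert a new step of size $0$ (a vertex with one child) into the excursion, or equivalently to subdivide one edge of the tree. Inserting such a vertex along a longest root-to-leaf path increases the height by exactly $1$; inserting it elsewhere never decreases the height. The point is to make this into a genuine comparison of ratios: one fixes the position among the $n+1$ "slots" where the degree-$1$ vertex lands to build a surjection onto $\cT(\nseq)$ with uniformly bounded fibres, and tracks how the event $\{\H\ge h\}$ behaves. I would phrase this cleanly as: the map "delete a uniformly chosen degree-$1$ vertex" sends a uniform tree of type $\nseq'$ (given root degree $r$) to a uniform tree of type $\nseq$ (given root degree $r$), while it can only decrease the height; by the standard fact that a monotone coupling implies stochastic domination, this gives $\H(T)\preceq_{\mathrm{st}}\H(T')$. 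One must check that deleting a uniform degree-$1$ vertex from a uniform $T'$ is again uniform on $\cT(\nseq)$ with the right root degree, which follows because each $t\in\cT(\nseq)$ has exactly $n_1+1$ preimages (the $n'_1=n_1+1$ choices of which degree-$1$ vertex to re-insert and where), a count independent of $t$; one small subtlety is root degree when $r=1$, handled separately.

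For case $(b)$ — replacing a vertex of out-degree $k$ and a vertex of out-degree $\ell$ by vertices of out-degrees $k-1$ and $\ell+1$ (with $k\le\ell$) — the analogous move is a "re-grafting": detach one of the $k$ subtrees hanging off the degree-$k$ vertex $x$ and re-attach it as an extra child of the degree-$\ell$ vertex $y$. In \L ukasiewicz-path terms this is the classical "increment-shuffling" that decreases one increment by $1$ and increases another by $1$, keeping the excursion property provided one is careful about where $y$ sits relative to $x$ (one can always choose the moved subtree and the insertion position so that the excursion stays nonnegative — this is exactly the kind of careful case analysis in Section~\ref{sec:H2vsH}). I expect this to be the main obstacle: unlike case $(a)$, re-grafting does not obviously preserve uniformity, and it need not be monotone pointwise for the height, so the honest route is the ratio inequality above, proved by exhibiting an injection $\{t\in\cT(\nseq):\rk_\varnothing(t)=r,\H(t)\ge h\}\hookrightarrow\{t'\in\cT(\nseq'):\rk_\varnothing(t')=r,\H(t')\ge h\}$ together with a matching bijection (or injection in the reverse direction) at the level of the full sets — i.e. showing $\#\cT(\nseq)\cdot\#\{\,\cdot\,\ge h\ \text{in }\nseq'\} \ge \#\cT(\nseq')\cdot\#\{\,\cdot\,\ge h\ \text{in }\nseq\}$ by a direct double-counting that pairs each tall $\nseq$-tree with a family of tall $\nseq'$-trees and controls overcounting. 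The key quantitative input is a formula, derivable from the cycle lemma / Vervaat bijection of Section~\ref{subsec:vervaat} or from a direct determinant-free count, for $\#\cT(\nseq)$ with fixed root degree in terms of $n$ and the $n_i$, which makes the required inequality between the two ratios a clean algebraic consequence of $k\le\ell$ (morally: concentrating degree is "cheaper" and yields taller trees, matching the intuition that the extreme type $\nseq^\ast$ of Proposition~\ref{prop:stoch_dom}, reached by iterating these moves, is the tallest).

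Finally, once the covering relations are handled, the general statement $\nseq\preceq\nseq'\Rightarrow\H(T)\preceq_{\mathrm{st}}\H(T')$ follows by transitivity of stochastic domination along a chain of covers, and Proposition~\ref{prop:stoch_dom} follows by checking that any admissible $\nseq$ with $n_r\ge1$ lies below $\nseq^\ast(n,r)$ in this order: repeatedly apply move $(b)$ to push all out-degrees down to $0$ except for retaining one vertex of degree $r$ (pairing a large degree with a leaf-slot), and use move $(a)$ to top up the number of degree-$1$ vertices to $n-r$, ending exactly at $\nseq^\ast$.
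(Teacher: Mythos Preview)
Your handling of case~$(a)$ and the transitivity argument are fine and match the paper (the paper goes in the forward direction, inserting a $0$-step into the \L ukasiewicz path at a uniform position; your deletion map is its inverse, though the fibre size over $t\in\cT(\nseq)$ is $n-1$, the number of edges, not $n_1+1$).

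The gap is in case~$(b)$. You correctly diagnose that re-grafting one subtree is neither measure-preserving nor monotone, but your proposed fix does not work. Your plan asks for an injection from tall $\nseq$-trees to tall $\nseq'$-trees together with an injection from all $\nseq'$-trees to all $\nseq$-trees. The second injection would require $\#\{t'\in\cT(\nseq'):\rk_\varnothing=r\}\le\#\{t\in\cT(\nseq):\rk_\varnothing=r\}$, and this is simply false in general: the cycle-lemma count gives (for $r\notin\{k-1,k,\ell,\ell+1\}$) the ratio $\tfrac{n_{k-1}\,n_{\ell+1}}{(n_k+1)(n_\ell+1)}$, which can be arbitrarily large. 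So no ``formula for $\#\cT(\nseq)$'' will rescue this route; the hard part is precisely the count of tall trees, for which there is no closed form, and your ``direct double-counting'' remains a wish rather than a mechanism.

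What the paper does instead is condition on finer data that makes both sides comparable. Mark in $T$ a uniform non-root vertex $U$ of out-degree $k-1$ and a uniform $V\neq U$ of out-degree $\ell+1$, and similarly $U',V'$ of out-degrees $k,\ell$ in $T'$. Declare two marked trees equivalent when, after detaching all subtrees pendant at children of the two marked vertices (except the one containing the other mark, in the ancestral case), one is left with the same two-pointed skeleton and the same multiset of detached subtrees. An explicit involution on marked trees shows that $(T,\{U,V\})$ and $(T',\{U',V'\})$ induce the \emph{same} law on equivalence classes (this is where the measure-preservation issue is repaired). Conditional on the class, the only randomness left is which subset of the $k+\ell$ (or $k+\ell-1$) pendant subtrees is attached at the higher of the two marked vertices, and the height comparison reduces to a clean combinatorial statement: if $A$ is uniform on ${[k+\ell]\choose k-1}\cup{[k+\ell]\choose \ell+1}$ and $A'$ is uniform on ${[k+\ell]\choose k}\cup{[k+\ell]\choose \ell}$, then $\max_{i\in A}c_i$ is stochastically dominated by $\max_{i\in A'}c_i$ for any constants $c_i\ge 0$. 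This ``eggs-in-one-basket'' lemma (Lemma~29 of \cite{DonLAB24}) is the key quantitative input you are missing; it is what makes the inequality $k\le\ell$ bite.
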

In words, to obtain $\nseq$ from $\nseq'$ in the definition of $\preceq$, we either exclude a vertex with degree $1$, or, for $1\le k\le \ell $ we replace one vertex with $k$ children and one vertex with $\ell$  children by a vertex with $k-1$ children and one with $\ell+1$ children. In the second case, informally, the degrees in $\nseq$ are more skewed than the degrees in $\nseq'$. We prove Theorem~\ref{thm:stoch_ord} in Appendix~\ref{app:stochdom}. The proof is a straightforward adaptation of the proof of Theorem~9 in \cite{DonLAB24} from labelled rooted trees to plane trees with fixed root degree.

\begin{proof}[Proof of Proposition \ref{prop:stoch_dom} using Theorem~\ref{thm:stoch_ord}]
We will show that for any type sequence $\nseq=( n_i)_{i\ge 0}$ with size at most $n+1$ and $ n_r\ge 1$, it holds that $\nseq\preceq \nseq^*$. We may assume without loss of generality that the size of $\nseq$ equals $n+1$. Indeed, if its size were $n+1-k$ for $k\ge 1$ we may instead consider $\nseq'=(n'_i)_{i\ge 0}$ with $n'_i= n_i+k\I{ i=1}$. Then $\nseq'$ has size $n+1$, $n'_r\ge 1$ and $\nseq \preceq \nseq'$.

Note that for any $i\geq 2$, we have $ n_i^*=\I{i=r}\leq n_i$ so $\sum_{i\ge 2}(i-1)( n_i-n^*_i)\ge 0$. We will prove by induction over $m\ge 0$ that for all $m\ge 0$, if $\nseq=( n_i)_{i\ge 0}$ is a type sequence with size $n+1$, $ n_r\ge 1$ and $\sum_{i\ge 2}(i-1)(n_i-n^*_i)=m$, then it holds that $\nseq \preceq \nseq^*$.  

 First, we see that if $\sum_{i\ge 2}(i-1)( n_i-n^*_i)=0$ then $ n_i= n_i^*$ for all $i\geq 2$, which then implies that $ n_0+ n_1= n_0^*+ n_1^*$ and $ n_1= n_1^*$ because $\nseq$ and $\nseq^*$ have the same size. Thus, $\nseq=\nseq^*$ and the statement holds for $m=0$.

Let $m\ge 1$ and suppose we have shown the statement for all $0\le k<m$. Let $\nseq=( n_i)_{i\ge 0}$ be a type sequence with size $n+1$, $ n_r\ge 1$ and $\sum_{i\ge 2}(i-1)( n_i-n^*_i)=m$. We will define a type sequence $\nseq'=(n'_i)_{i\ge 0}$ of size $n+1$ with $ n_r'\geq 1$ for which $\nseq'$ covers $\nseq$ and $\sum_{i\ge 2}(i-1)(n'_i-n^*_i)=m-1$. Then $\nseq\preceq \nseq^*$ follows by the induction hypothesis. Since $\sum_{i\ge 2}(i-1)( n_i-n^*_i)\ge 1$, there is an $\ell\ge 2$ with $ n_\ell>n^*_{\ell}$. Moreover, since $\nseq$ is a type sequence, we have $ n_0>0$. Then, set 
\[ n'_i =  n_i-\I{i=0}+\I{i=1}-\I{i=\ell}+\I{i=\ell-1}\]
so that we obtain $\nseq'$ from $\nseq$ by replacing a vertex with degree $0$ and a vertex with degree $\ell$ by a vertex with degree $1$ and a vertex with degree $\ell-1$. 
It is clear that $\nseq'$ satisfies the claimed properties, which finishes the proof.
\end{proof}

\section{Symmetrization by shuffling}
\label{sec:symmetrization}

\subsection{Shuffling the order of plane trees}

Via the lexicographic order, the (plane) trees we consider are naturally endowed with a left-right order which is unrelated to their genealogical structures. We formalise this principle by considering a family of bijective maps that rearrange the order of the children of each vertex of an input tree. Recall that for $k\geq 1$, $\cS_k$ stands for the set of permutations of $[k]$.

\begin{definition}
\label{shuffle_def}
A \emph{shuffle} $\Gamma=(\gamma_{u,k}\, ;\, u\in\mathbb{U},k\geq 1)$ is a collection of permutations with $\gamma_{u,k}\in \cS_k$ for all $u\in \mathbb{U},k\ge 1$.
\end{definition}

Let $t$ be a tree. We inductively construct $\Gamma_u(t)\in\mathbb{U}$ for all $u\in t$ by setting
\begin{equation}
\label{shuffle_formula}
\Gamma_\varnothing(t)=\varnothing\quad\text{ and }\quad\Gamma_{v*(j)}(t)=\Gamma_v(t)*(\gamma_{v,\rk_v(t)}(j))
\end{equation}
for any $v\in t$ and $1\leq j\leq \rk_v(t)$. Then, we define $\Gamma(t)=\{\Gamma_u(t)\, :\, u\in t\}$. Thus, informally, if we place a mark on vertex $u$ in $t$, then rearrange the order of the children of each vertex $v$ in $t$ according to $\gamma_{v,\rk_v(t)}$, then $\Gamma_u(t)$ is the vertex with a mark in the resulting tree $\Gamma(t)$. See Figure~\ref{fig:shuffle}.
\begin{figure}[h]
    \centering
    \begin{subfigure}{0.4\textwidth}
    \centering
    \begin{tikzpicture}[baseline={(current bounding box.center)}, every node/.style={circle,fill,inner sep=0pt, minimum size=5pt}, level
distance=10mm,level 1/.style={sibling distance=20mm},
     level 2/.style={sibling distance=10mm},
     level 3/.style={sibling distance=5mm}]
     \node  {}  [grow=up]
     child {node{}
     	child {node{}
                child {node{}
                    child {node{}}
                }
     		child {node{}}
     		child {node{}
     			child {node{}}
     			child {node{}}
     			}
     		}
     	child {node{}
     		child {node{}}
                }
     	}
     child {node{}};
    \end{tikzpicture}
    \end{subfigure}
    \begin{subfigure}{0.4\textwidth}
    \centering
    \begin{tikzpicture}[baseline={(current bounding box.center)}, every node/.style={circle,fill,inner sep=0pt, minimum size=5pt}, level
distance=10mm,level 1/.style={sibling distance=20mm},
     level 2/.style={sibling distance=10mm},
     level 3/.style={sibling distance=5mm}]
     \node  {} [grow=up]
     child {node{}
            child {node{}
     		child {node{}}
                }
     	child {node{}
                child {node{}
     			child {node{}}
     			child {node{}}
     			}
                child {node{}
                    child {node{}}
                }
     		child {node{}}
     		}
     	}
     child {node{}};
    \end{tikzpicture}
    \end{subfigure}
    \caption{Left: an example of a tree $t$. Right: an example for $\Gamma(t)$, where $\Gamma$ is some shuffle.}
    \label{fig:shuffle}
\end{figure}
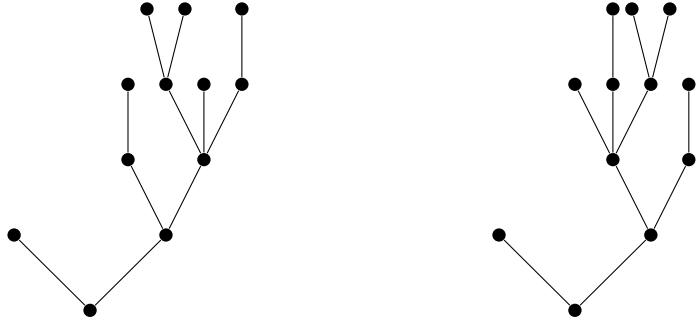

The next result gathers some properties of the map $u\mapsto \Gamma_u(t)$ for a fixed tree $t$.

\begin{prop}
\label{properties_shuflle}
For any shuffle $\Gamma$ and any tree $t$, the following assertions hold.
\begin{compactenum}
    \item[$(i)$] For all $u\in t\setminus\{\varnothing\}$, the parent of $\Gamma_u(t)$ is $\Gamma_{\overleftarrow{u}}(t)$.
    \item[$(ii)$] For all $u\in t$, it holds that $|\Gamma_u(t)|=|u|$.
    \item[$(iii)$] The map $u\in t\longmapsto \Gamma_u(t)\in\Gamma(t)$ is a bijection.
    \item[$(iv)$] The set $\Gamma(t)$ is a tree.
    \item[$(v)$] For all $u\in t$, it holds that $\rk_{\Gamma_u(t)}(\Gamma(t))=\rk_u(t)$.
    \item[$(vi)$] For all $u,v\in t$, it holds that $\Gamma_u(t) \wedge \Gamma_v(t)=\Gamma_{u\wedge v}(t)$.
\end{compactenum}
\end{prop}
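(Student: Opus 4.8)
The plan is to establish the six assertions in the given order, each feeding into the next, all by unwinding the recursive definition \eqref{shuffle_formula}. Assertion $(i)$ is immediate: for $u\in t\setminus\{\varnothing\}$ write $v=\overleftarrow{u}$, so $u=v*(j)$ with $1\le j\le \rk_v(t)$, and then $\Gamma_u(t)=\Gamma_v(t)*(\gamma_{v,\rk_v(t)}(j))$ is obtained from $\Gamma_v(t)$ by appending one letter, whence its parent is $\Gamma_v(t)=\Gamma_{\overleftarrow{u}}(t)$. Assertion $(ii)$ then follows by induction on $|u|$: the base case is $|\Gamma_\varnothing(t)|=0=|\varnothing|$, and the inductive step reads $|\Gamma_u(t)|=|\Gamma_{\overleftarrow{u}}(t)|+1=|\overleftarrow{u}|+1=|u|$ by $(i)$.

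For $(iii)$, surjectivity is built into the definition $\Gamma(t)=\{\Gamma_u(t):u\in t\}$, so only injectivity is at issue. I would argue by induction on $|u|+|v|$ that $\Gamma_u(t)=\Gamma_v(t)$ forces $u=v$: by $(ii)$ the two words have the same length, so if either is $\varnothing$ both are; otherwise, taking parents and applying $(i)$ gives $\Gamma_{\overleftarrow{u}}(t)=\Gamma_{\overleftarrow{v}}(t)$, hence $\overleftarrow{u}=\overleftarrow{v}$ by the induction hypothesis, and comparing the final letters together with the injectivity of the permutation $\gamma_{\overleftarrow{u},\rk_{\overleftarrow{u}}(t)}$ yields $u=v$.

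I would then prove $(iv)$ and $(v)$ simultaneously by verifying conditions $(a)$--$(c)$ of Definition~\ref{def:tree} for $\Gamma(t)$. Conditions $(a)$ and $(b)$ are immediate from $\Gamma_\varnothing(t)=\varnothing$, from $(i)$, and from the fact that $t$ itself is a tree. For $(c)$, fix $w=\Gamma_u(t)\in\Gamma(t)$ (with $u\in t$ uniquely determined by $(iii)$) and set $k=\rk_u(t)$; the claim is that $w*(i)\in\Gamma(t)$ if and only if $i\in[k]$, which simultaneously verifies $(c)$ and gives $\rk_w(\Gamma(t))=k=\rk_u(t)$, i.e.\ $(v)$. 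If $i\in[k]$, choose $j\in[k]$ with $\gamma_{u,k}(j)=i$; then $u*(j)\in t$ and $\Gamma_{u*(j)}(t)=w*(i)$. Conversely, if $w*(i)=\Gamma_{u'}(t)$ for some $u'\in t$, then by $(i)$ the parent $w$ of $\Gamma_{u'}(t)$ equals $\Gamma_{\overleftarrow{u'}}(t)$, so injectivity gives $\overleftarrow{u'}=u$, thus $u'=u*(j)$ with $j\in[k]$, and then $i=\gamma_{u,k}(j)\in[k]$.

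The step I expect to require the most care is $(vi)$. Here I would first upgrade the previous items to a two-sided statement about ancestry: for $u,v\in t$, one has $u\preceq v$ in $t$ if and only if $\Gamma_u(t)\preceq\Gamma_v(t)$. Iterating $(i)$ along the ancestral chain $\varnothing=v_0\prec v_1\prec\cdots\prec v_{|v|}=v$ of $v$ shows that $\Gamma_{v_0}(t)\prec\cdots\prec\Gamma_{v_{|v|}}(t)$ is exactly the ancestral chain of $\Gamma_v(t)$ (using $(ii)$ to see the lengths match up); the forward implication is then clear, and the reverse follows because an ancestor of $\Gamma_v(t)$ of length $|u|$ must be $\Gamma_{v_{|u|}}(t)$, so $(iii)$ forces $u=v_{|u|}\preceq v$. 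Granting this, set $w=u\wedge v$ and $z=\Gamma_u(t)\wedge\Gamma_v(t)$: from $w\preceq u,v$ we get $\Gamma_w(t)\preceq z$; conversely $z$ is an ancestor of $\Gamma_u(t)\in\Gamma(t)$, so $z\in\Gamma(t)$ by $(iv)$, say $z=\Gamma_{u'}(t)$, and the ancestry equivalence yields $u'\preceq u$ and $u'\preceq v$, hence $u'\preceq w$ and $z=\Gamma_{u'}(t)\preceq\Gamma_w(t)$; the two inequalities give $z=\Gamma_w(t)$, as required. All told, the only non-routine ingredient is the ancestral-chain bookkeeping underlying the ancestry equivalence; the remaining steps are direct unwindings of \eqref{shuffle_formula}.
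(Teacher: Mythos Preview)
Your proof is correct and follows essentially the same approach as the paper: both derive $(i)$ and $(ii)$ directly from \eqref{shuffle_formula}, prove injectivity in $(iii)$ by reducing to parents and invoking the injectivity of the permutations, verify the tree axioms for $(iv)$ while simultaneously reading off $(v)$, and obtain $(vi)$ by first establishing the ancestry equivalence $u\preceq v\Leftrightarrow\Gamma_u(t)\preceq\Gamma_v(t)$ and then combining it with the bijectivity from $(iii)$ and the tree structure from $(iv)$. The only cosmetic differences are that the paper inducts on the height $h$ in $(iii)$ rather than on $|u|+|v|$, and in $(vi)$ the paper phrases the final step as ``$w$ is the common ancestor of $u,v$ of maximum height'' while you argue the two inequalities $\Gamma_w(t)\preceq z$ and $z\preceq\Gamma_w(t)$ separately; neither constitutes a genuinely different route.
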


\begin{proof}
The point $(i)$ readily follows from the definition (\ref{shuffle_formula}). Then, relying on $(i)$, an obvious induction on the height of $u$ yields $(ii)$.

By $(ii)$, the proof of $(iii)$ only requires to show that $u\in\{v\in t\, :\, |v|=h\}\longmapsto \Gamma_u(t)$ is injective for all $h\geq 0$. We do so by induction. The case $h=0$ is obvious because $\varnothing$ is the only vertex of $t$ with height zero. For $h\geq 1$, let $u,v\in t$ such that $|u|=|v|=h$ and $\Gamma_u(t)=\Gamma_v(t)$. By $(i)$ we have $\Gamma_{\overleftarrow{u}}(t)=\Gamma_{\overleftarrow{v}}(t)$ because both are the parent of the same vertex, so $\overleftarrow{u}=\overleftarrow{v}$ by induction hypothesis. Writing $u=w*(i)$ and $v=w*(j)$, we get that $\gamma_{w,\rk_w(t)}(i)=\gamma_{w,\rk_w(t)}(j)$ and then $i=j$ since $\gamma_{w,\rk_w(t)}$ is a permutation. Thus, we indeed have $u=v$ as desired and $(iii)$ follows.

Recall that $\Gamma_{\varnothing}(t)=\varnothing$ by definition, so that $\Gamma(t)$ satisfies $(a)$ in Definition~\ref{def:tree} of trees. $(b)$ follows from $(i)$. Moreover, by $(i)$ and $(iii)$, for any $u\in t$, the children of $\Gamma_u(t)$ in $\Gamma(t)$ are the $\Gamma_{u*(j)}(t)$ for $1\leq j\leq \rk_u(t)$.By $(iii)$, these vertices are distinct. Then, the fact that $\gamma_{u,\rk_u(t)}$ is a permutation of $[\rk_u(t)]$ ensures that the children of $\Gamma_u(t)$ in $\Gamma(t)$ are exactly the $\Gamma_u(t)*(j)$ for $1\leq j\leq \rk_u(t)$. This completes the proof of $(iv)$, because $\Gamma(t)$ satisfies Definition~\ref{def:tree} $(c)$, and also gives $(v)$.

Still using $(i)$, an easy induction entails that for all $u,v\in t$, if $u\preceq v$ then $\Gamma_u(t)\preceq \Gamma_v(t)$. The converse holds by injectivity thanks to $(iii)$. Now, let $u,v\in t$. The point $(iv)$ ensures that there is $w\in t$ such that $\Gamma_u(t)\wedge\Gamma_v(t)=\Gamma_w(t)$. By definition of the most recent common ancestor, the previous equivalence and the point $(ii)$ yield that $w$ is the common ancestor of $u,v$ with maximum height, i.e.~$w=u\wedge v$.
\end{proof}

Now, we consider the properties of $\Gamma$ as a transformation of trees.

\begin{prop}
\label{shuffle_bij_type}
For any shuffle $\Gamma$, the map $t\in\cT\longmapsto\Gamma(t)\in\cT$ is a bijection. Moreover, for any tree $t$, $\Gamma(t)$ has the same type as $t$.
\end{prop}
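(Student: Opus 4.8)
The plan is to handle the type statement first, since it follows immediately from Proposition~\ref{properties_shuflle}, and then to deduce bijectivity from it via a finiteness argument that reduces everything to checking injectivity.

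For the type claim, note that by Proposition~\ref{properties_shuflle}$(iii)$ the map $u\mapsto\Gamma_u(t)$ is a bijection from $t$ onto $\Gamma(t)$, and by Proposition~\ref{properties_shuflle}$(v)$ it preserves out-degrees, i.e.~$\rk_{\Gamma_u(t)}(\Gamma(t))=\rk_u(t)$ for all $u\in t$. Hence, for each $i\geq 0$, it restricts to a bijection between $\{u\in t:\rk_u(t)=i\}$ and $\{w\in\Gamma(t):\rk_w(\Gamma(t))=i\}$, so that $n_i(\Gamma(t))=n_i(t)$; that is, $\Gamma(t)$ has the same type as $t$.

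In particular, $t\longmapsto\Gamma(t)$ maps $\cT(\nseq)$ into itself for every type sequence $\nseq$, and $\cT(\nseq)$ is a finite set (a plane tree of type $\nseq$ has $\sum_{i\geq 0}n_i$ vertices, and there are only finitely many plane trees of a given size). An injective self-map of a finite set is automatically a bijection, so it suffices to show that $t\longmapsto\Gamma(t)$ is injective on each $\cT(\nseq)$; since $\Gamma$ preserves type, this yields injectivity, and hence bijectivity, of $\Gamma$ on all of $\cT$. To prove injectivity, I would suppose $\Gamma(t)=\Gamma(t')=:s$ and argue by induction on $h\geq 0$ that $\{u\in t:|u|\leq h\}=\{u\in t':|u|\leq h\}$ and that $\rk_u(t)=\rk_u(t')$ for every such $u$; as $t$ and $t'$ are finite, taking $h$ past their heights then gives $t=t'$. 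The base case $h=0$ holds because $\rk_\varnothing(t)=\rk_\varnothing(s)=\rk_\varnothing(t')$ by Proposition~\ref{properties_shuflle}$(v)$. For the inductive step, the crucial observation, read off from the recursion~(\ref{shuffle_formula}), is that $\Gamma_u(t)$ depends only on $u$ together with the out-degrees in $t$ of the proper ancestors of $u$; the induction hypothesis therefore forces $\Gamma_u(t)=\Gamma_u(t')$ for every $u$ of height $h$, and matching out-degrees then forces the vertices of height $h+1$ in $t$ and $t'$ to agree, with $\rk_{u*(j)}(t)=\rk_{\Gamma_{u*(j)}(t)}(s)=\rk_{u*(j)}(t')$ once more by Proposition~\ref{properties_shuflle}$(v)$.

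The only mildly delicate point is the bookkeeping in this induction, in particular making precise that $\Gamma_u(t)$ is a function of $u$ and the ancestral out-degree sequence alone, but this is a routine unwinding of~(\ref{shuffle_formula}). Apart from that I anticipate no real obstacle, as all the structural content has already been packaged into Proposition~\ref{properties_shuflle}. Alternatively, one could exhibit an explicit inverse by un-shuffling $s$ level by level using the permutations $\gamma_{v,k}^{-1}$; this also works, but the finiteness argument above is more economical.
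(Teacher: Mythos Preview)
Your proof is correct. The type claim is handled identically to the paper, via Proposition~\ref{properties_shuflle}$(iii)$ and $(v)$, and you correctly reduce bijectivity to injectivity on each finite set $\cT(\nseq)$.

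For injectivity, your route differs from the paper's. The paper inducts on the \emph{size} of the tree: given $\Gamma(t)=\Gamma(t')$, it matches the root degrees, then introduces sub-shuffles $\Gamma^{(j)}=(\gamma_{(j)*u,k})_{u,k}$, verifies that $\theta_{(i)}\Gamma(t)=\Gamma^{(j)}(\theta_{(j)}t)$ for $i=\gamma_{\varnothing,\rk_\varnothing(t)}(j)$, and applies the induction hypothesis to the smaller subtrees $\theta_{(j)}t$, $\theta_{(j)}t'$. You instead induct on the \emph{height}, propagating the agreement of vertices and out-degrees level by level, the key point being that $\Gamma_u(t)$ depends only on $u$ and the ancestral out-degree profile. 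Your approach avoids defining the auxiliary shuffles $\Gamma^{(j)}$ and is arguably more direct; the paper's approach is a cleaner recursion into subtrees and makes the self-similar structure of the shuffle more explicit. Both rely on exactly the same input, namely the recursion~(\ref{shuffle_formula}) and Proposition~\ref{properties_shuflle}$(v)$.

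One small wording quibble: in your inductive step you assert $\Gamma_u(t)=\Gamma_u(t')$ for $|u|=h$, but what you actually need (and what your ancestral-dependence observation gives) is this identity for $|u|=h+1$, in order to conclude $\rk_{u*(j)}(t)=\rk_{u*(j)}(t')$ at the end. The argument is unaffected; just tighten the phrasing when you write it out in full.
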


\begin{proof}
The fact that $\Gamma(t)$ and $t$ have the same type follows from Proposition~\ref{properties_shuflle} $(iii)$ and $(v)$. Hence, we only need to prove that for all $n\geq 1$, for any shuffle $\Gamma$, the restriction of $t\in\cT\longmapsto\Gamma(t)\in\cT$ to the subset of all trees with size $n$ is injective. Indeed, as a self-map of a finite set, it is bijective if and only if it is injective. We prove this by induction. The singleton $\{\varnothing\}$ is the only tree of size $1$ so the case $n=1$ is obvious.

Let $n\geq 2$, let $\Gamma$ be a shuffle, and let $t,t'$ be two trees of size $n$ such that $\Gamma(t)=\Gamma(t')$. Thanks to Proposition~\ref{properties_shuflle} $(v)$, we readily get that $\rk_\varnothing(t)=\rk_\varnothing(t')$. Next, for any $j\geq 1$, we set $\Gamma^{(j)}=(\gamma_{(j)*u,k}\, ;\, u\in\mathbb{U},k\geq 1)$ and note that this is a shuffle. Then, we check from (\ref{shuffle_formula}) that if $i=\gamma_{\varnothing,\rk_\varnothing(t)}(j)$, with $1\leq i,j\leq \rk_\varnothing(t)$, then $\theta_{(i)}\, \Gamma(t)=\Gamma^{(j)}(\theta_{(j)}t)$. Doing the same with $t'$ gives that $\Gamma^{(j)}(\theta_{(j)}t)=\Gamma^{(j)}(\theta_{(j)}t')$ for all $1\leq j\leq \rk_\varnothing(t)$. By induction hypothesis, it follows that $\theta_{(j)}t=\theta_{(j)}t'$ for all $1\leq j\leq \rk_\varnothing(t)=\rk_\varnothing(t')$ and thus that $t=t'$.
\end{proof}

An important example of shuffle is the one that reverses the order of the children of every vertex. More precisely, we consider the \emph{mirroring shuffle} $\rM=(\mathrm{m}_{u,k}\, ;\, u\in\mathbb{U},k\geq 1)$, where
\begin{equation}
\label{mirror}
\forall j\in[k],\quad \mathrm{m}_{u,k}(j)=k+1-j,
\end{equation}
for any $u\in\mathbb{U}$ and $k\geq 1$. If $t$ is a tree and $u\in t$, then we call $\rM(t)$ the \emph{mirror image of $t$} and we call $\rM_u(t)$ the \emph{mirror image of $u$ in $t$}. See Figure~\ref{fig:mirror}.

\begin{figure}[h]
    \centering
    \begin{subfigure}{0.4\textwidth}
    \centering
    \begin{tikzpicture}[baseline={(current bounding box.center)}, every node/.style={circle,fill,inner sep=0pt, minimum size=5pt}, level
distance=10mm,level 1/.style={sibling distance=20mm},
     level 2/.style={sibling distance=10mm},
     level 3/.style={sibling distance=5mm}]
     \node  {} [grow=up]
     child {node{}
     	child {node{}
                child {node{}
                    child {node{}}
                }
     		child {node{}}
     		child {node{}
     			child {node{}}
     			child {node{}}
     			}
     		}
     	child {node{}
     		child {node{}}
                }
     	}
     child {node{}};
    \end{tikzpicture}
    \end{subfigure}
    \begin{subfigure}{0.4\textwidth}
    \centering
    \begin{tikzpicture}[baseline={(current bounding box.center)}, every node/.style={circle,fill,inner sep=0pt, minimum size=5pt}, level
distance=10mm,level 1/.style={sibling distance=20mm},
     level 2/.style={sibling distance=10mm},
     level 3/.style={sibling distance=5mm}]
     \node  {} [grow=up]
     child {node{}}
     child {node{}
            child {node{}
     		child {node{}}
                }
     	child {node{}
                child {node{}
     			child {node{}}
     			child {node{}}
     			}
                child {node{}}
                child {node{}
                    child {node{}}
                    }
     		}
     	}
      ;
    \end{tikzpicture}
    \end{subfigure}
    \caption{Left: an example of a tree $t$. Right: the tree $\rM(t)$.}
    \label{fig:mirror}
\end{figure}
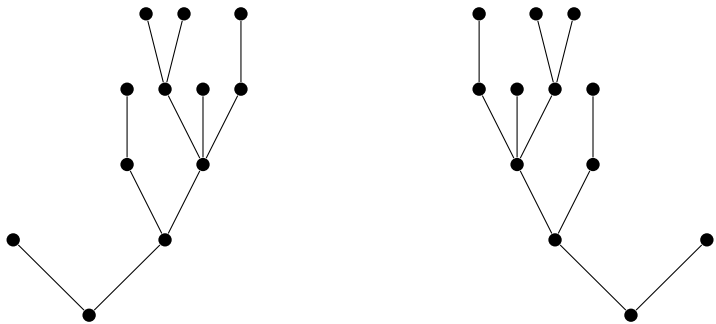

The next proposition asserts that the mirroring shuffle inverts the lexicographic order for vertices that are not ancestrally related.

\begin{prop}
\label{mirror_reverses}
Let $t$ be a tree and let $u<v\in t$. If $u \prec v$ then $\rM_u(t)< \rM_v(t)$. Otherwise, $\rM_u(t)>\rM_v(t)$. 
\end{prop}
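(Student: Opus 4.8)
The plan is to follow the same pattern as Proposition~\ref{properties_shuflle}, splitting into the two cases of the statement. First suppose $u\prec v$, i.e.\ $u$ is a strict ancestor of $v$. Then I would invoke the monotonicity of $x\mapsto\rM_x(t)$ with respect to the genealogical order $\preceq$: from point $(i)$ of Proposition~\ref{properties_shuflle} one gets, by induction on $|v|-|u|$, that $u\preceq v$ implies $\rM_u(t)\preceq\rM_v(t)$, and then injectivity (point $(iii)$) upgrades this to $\rM_u(t)\prec\rM_v(t)$. Since a strict ancestor is always lexicographically smaller (if $a\prec b$ then $b=a*w$ with $w\neq\varnothing$, whence $a<b$), this gives $\rM_u(t)<\rM_v(t)$, which is the first assertion.

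Now suppose $u<v$ but $u\not\prec v$; since $u<v$ we cannot have $v\prec u$ either, so $u$ and $v$ are not ancestrally related. Set $w=u\wedge v$. The common ancestors of $u$ and $v$ are exactly their common prefixes (ordered by length), so $w$ is their longest common prefix, and since neither of $u,v$ is a prefix of the other I may write $u=w*(i)*u'$ and $v=w*(j)*v'$ with $i,j\in[\rk_w(t)]$, $i\neq j$. The lexicographic inequality $u<v$ then forces $i<j$, because the first coordinate in which $u$ and $v$ differ is the $(|w|+1)$-th, with values $i$ and $j$. Writing $k=\rk_w(t)$ and using the defining recursion \eqref{shuffle_formula} together with \eqref{mirror}, I obtain $\rM_{w*(i)}(t)=\rM_w(t)*(k+1-i)$ and $\rM_{w*(j)}(t)=\rM_w(t)*(k+1-j)$. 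By the monotonicity recalled above, $w*(i)\preceq u$ gives $\rM_w(t)*(k+1-i)\preceq\rM_u(t)$, and similarly $\rM_w(t)*(k+1-j)\preceq\rM_v(t)$. Hence $\rM_u(t)$ and $\rM_v(t)$ share the common prefix $\rM_w(t)$ (which has length $|w|$ by Proposition~\ref{properties_shuflle}$(ii)$), and their $(|w|+1)$-th coordinates are $k+1-i$ and $k+1-j$, which are distinct. Since $i<j$ we have $k+1-i>k+1-j$, so the first differing coordinate is strictly larger for $\rM_u(t)$, i.e.\ $\rM_u(t)>\rM_v(t)$, which completes the second assertion.

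This argument is essentially bookkeeping, so I do not expect a serious obstacle; the two places needing care are, first, identifying $u\wedge v$ with the longest common prefix so that the lexicographic comparison of $u$ and $v$ is governed exactly by the coordinates $i$ and $j$ immediately following $w$, and second, tracking the single order-reversal performed by $\mathrm{m}_{w,k}$. I would therefore spell out the chain $i<j\Rightarrow k+1-i>k+1-j\Rightarrow\rM_u(t)>\rM_v(t)$ explicitly, to be sure the inequality flips precisely once.
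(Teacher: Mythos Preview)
Your proof is correct and follows essentially the same approach as the paper. The only minor difference is that for the ancestral case the paper invokes Proposition~\ref{properties_shuflle}$(vi)$ directly (writing $\rM_u(t)\wedge\rM_v(t)=\rM_{u\wedge v}(t)=\rM_u(t)$), whereas you rederive the monotonicity $u\preceq v\Rightarrow\rM_u(t)\preceq\rM_v(t)$ by induction from~$(i)$; the second case is handled identically in both.
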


\begin{proof}
First, observe that $\rM_u(t)\ne \rM_v(t)$ follows from Proposition~\ref{properties_shuflle} $(iii)$. 

We start with the first statement. By Proposition~\ref{properties_shuflle} $(vi)$ it holds that $\rM_u(t)\wedge \rM_v(t)=\rM_{u\wedge v}(t)=\rM_u(t)$ so that $\rM_u(t)\prec \rM_v(t)$ and thus $\rM_u(t)<\rM_v(t)$. 

For the second statement, let us write $w=u\wedge v$ and $k=\rk_w(t)$. By assumption, there are distinct $1\leq i,j\leq k$ such that $w*(i)\preceq u$ and $w*(j)\preceq v$. Note that $u<v$ implies that $i<j$. Thanks to (\ref{shuffle_formula}) and Proposition~\ref{properties_shuflle} $(vi)$, we have $\rM_w(t)*(k+1-i)\preceq \rM_u(t)$ and $\rM_w(t)*(k+1-j)\preceq \rM_v(t)$. Thus, we have $\rM_v(t)<\rM_u(t)$.
\end{proof}

We end this section by deducing from Proposition~\ref{mirror} the effect of the mirroring shuffle on the spinal weights $\rS_u^{\rg}(t),\rS_u^{\rd}(t),\rS_u(t)$ introduced in Section~\ref{sec:construction_around_spine}. Proposition~\ref{mirror_reverses} and Proposition~\ref{properties_shuflle} $(i)$, $(iii)$, $(v)$ and $(vi)$ lead that for all $u\in t$, we have
\begin{equation}
\label{spine-weight_mirror}
\rS_{\rM_u(t)}(\rM(t))=\rS_u(t), \quad \rS_{\rM_u(t)}^{\rd}(\rM(t))=\rS_u^{\rg}(t)\quad\text{ and }\quad \rS_{\rM_u(t)}^{\rg}(\rM(t))=\rS_u^{\rd}(t).
\end{equation}

\subsection{Proof of Propositions~\ref{left-right_spine_weights} and \ref{left-right_spine_height}}

Let $\nseq=(n_i)_{i\geq 0}$ be a type sequence of size $n\geq 3$ and let $T$ be uniformly distributed on $\cT(\nseq)$. Armed with the properties of shuffles obtained above, we now show Propositions~\ref{left-right_spine_weights} and \ref{left-right_spine_height}, which were used in the proof of Theorem~\ref{thm:nseq} to remove the asymmetry introduced by Proposition~\ref{bound_product_right-weight-height}.

Proposition~\ref{left-right_spine_weights} asserts that in a uniformly random tree with a given type, if there is a vertex with a large spinal weight, then it is likely that there is also a vertex of which both the left and right spinal weight are quite large. Informally, this is because the vertices that attach to the spine of node $u$ do so independently and indifferently to the left or right. We utilise this idea by shuffling $T$ in a uniform manner.

\begin{proof}[Proof of Proposition~\ref{left-right_spine_weights}]
Let $\Pi=(\pi_{w,k}\, ;\, w\in\mathbb{U},k\geq 1)$ be a family of independent random permutations such that $\pi_{w,k}$ is a uniformly random permutation of $[k]$ for any $w\in\mathbb{U},k\geq 1$. Thus, $\Pi$ is a random shuffle in the sense of Definition~\ref{shuffle_def}. We also assume that $\Pi$ and $T$ are independent. As Proposition~\ref{shuffle_bij_type} asserts that any shuffle induces a type-preserving bijection of the set of trees, we see that $\Pi(T)$ is uniformly distributed on $\cT(\nseq)$ and so has the same law as $T$. Thus, the independence between $T$ and $\Pi$ entails that
\begin{multline*}
\P\Big(\max_{u\in T}\varepsilon\rS_u(T)\geq 1 \, ;\, \max_{u\in T}\varepsilon\rS_u(T)\geq \max_{u\in T}\min\big(\rS_u^{\rd}(T),\rS_u^{\rg}(T)\big)\Big)\\
=\sum_{t\in\cT}\P(T=t)\P\Big(\max_{u\in \Pi(t)}\varepsilon\rS_u(\Pi(t))\geq 1 \vee \max_{u\in \Pi(t)}\min\big(\rS_u^{\rd}(\Pi(t)),\rS_u^{\rg}(\Pi(t))\big)\Big).
\end{multline*}
Let $t$ be a tree and let $w\in t$ such that $\rS_w(t)=\max_{u\in t}\rS_u(t)$. Thanks to the expression of $\rS_u(T)$ given by (\ref{spinal-weights_as_sums}), Proposition~\ref{properties_shuflle} $(i)$, $(iii)$, and $(vi)$ yield that $\rS_{\Pi_u(t)}(\Pi(t))=\rS_u(t)$ for any $u\in t$, and then that $\max_{u\in \Pi(t)}\rS_u(\Pi(t))=\max_{u\in T}\rS_u(T)=\rS_w(t)=\rS_{\Pi_w(t)}(\Pi(t))$. In particular, if it holds that $\max_{u\in \Pi(t)}\varepsilon\rS_u(\Pi(t))\geq \max_{u\in \Pi(t)}\min\big(\rS_u^{\rd}(\Pi(t)),\rS_u^{\rg}(\Pi(t))\big)$, then we have $\varepsilon\rS_w(t)\geq \min\big(\rS_{\Pi_w(t)}^{\rd}(\Pi(t)),\rS_{\Pi_w(t)}^{\rg}(\Pi(t))\big)$. Therefore, we obtain the inequality
\begin{multline}
\label{left-right_spine_step}
\P\Big(\max_{u\in T}\varepsilon\rS_u(T)\geq 1 \, ;\, \max_{u\in T}\varepsilon\rS_u(T)\geq \max_{u\in T}\min\big(\rS_u^{\rd}(T),\rS_u^{\rg}(T)\big)\Big)\\
\leq \sup_{\substack{t\in\cT,w\in t\\ \varepsilon\rS_w(t)\geq 1}}\left[\P\big(\rS_{\Pi_w(t)}^{\rd}(\Pi(t))\leq \varepsilon\rS_w(t)\big)+\P\big(\rS_{\Pi_w(t)}^{\rg}(\Pi(t))\leq \varepsilon\rS_w(t)\big)\right]
\end{multline}
with the convention that $\sup\emptyset=0$.

Now, let us fix a tree $t$ and a vertex $w\in t$ such that $\varepsilon\rS_w(t)\geq 1$. We write $|w|=m$, $w=(j_1,\ldots,j_m)$. Then for any $0\leq i\leq m-1$, we denote by $w_i=(j_1,\ldots,j_i)$ the ancestor of $w$ at height $i$ and we set $a_i=\rk_{w_i}(t)-1\geq 0$ and $J_i=\pi_{w_i,\rk_{w_i}(t)}(j_{i+1})-1\geq 0$. With this notation, (\ref{spinal-weights_as_sums}) can now be written as
\begin{equation}
\label{uniform-shuffle_spinal-weights}
\rS_w(t)=\sum_{i=0}^{m-1}a_i,\quad\rS_{\Pi_w(t)}^{\rg}(\Pi(t))=\sum_{i=0}^{m-1} J_i\quad\text{ and }\quad \rS_{\Pi_w(t)}^{\rd}(\Pi(t))=\sum_{i=0}^{m-1} (a_i-J_i).
\end{equation}
We know from the law of $\Pi$ that the $J_i,0\leq i\leq m-1$ are independent and respectively uniform on $\{0,\ldots, a_i\}$. In particular, we compute that $\E{J_i}=a_i/2$ and $\V{J_i}=(a_i^2+2a_i)/12\leq a_i^2/4$. Setting $a_{\max}=\max_{0\leq i\leq m-1} a_i$, we pursue the proof with two different methods depending on the order of $a_{\max}$. We also assume that $\varepsilon<1/4$ since we otherwise have that $2\sqrt{8\varepsilon}\geq 1$, which would readily give the desired result.

\smallskip
$\bullet$ If $a_{\max}\geq \rS_w(t)\sqrt{\varepsilon/2}$, then we write that $\P\big(\rS_{\Pi_w(t)}^{\rg}(\Pi(t))\leq \varepsilon \rS_w(t)\big)\leq \P(J_i\leq\varepsilon\rS_w(t))$ for any $0\leq i\leq m-1$. Since the law of $J_i$ is uniform on $\{0,\ldots,a_i\}$, choosing $i$ such that $a_i=a_{\max}$ gives $\P(J_i\leq \varepsilon\rS_w(t))\leq (1+\varepsilon\rS_w(t))/(1+a_{\max})$. We recall that $1\leq \varepsilon\rS_w(t)$ to find
\[\P\big(\rS_{\Pi_w(t)}^{\rg}(\Pi(t))\leq \varepsilon \rS_w(t)\big)\leq \frac{1+\varepsilon\rS_w(t)}{1+a_{\max}}\leq \frac{2\varepsilon\rS_w(t)}{a_{\max}}\leq \sqrt{8\varepsilon}.\]

\smallskip
$\bullet$ If $a_{\max}\leq \rS_w(t)\sqrt{2\varepsilon}$, then we want to apply Chebyshev's inequality. To do this, we observe that $\rS_w(t)=\sum_{i=0}^{m-1}a_i=2\sum_{i=0}^{m-1}\E{J_i}$. Since $\varepsilon<1/4$, it follows that
\[\P\big(\rS_{\Pi_w(t)}^{\rg}(\Pi(t))\leq \varepsilon\rS_w(t)\big)\leq \P\Big(\Big|\sum_{i=0}^{m-1} J_i-\E{J_i}\Big|\geq \tfrac{1}{4}\rS_w(t)\Big)\leq \frac{16}{\rS_w(t)^2}\V{\sum_{i=0}^{m-1} J_i}.\]
Then, the independence of the $J_i$ yields that
\[\P\big(\rS_{\Pi_w(t)}^{\rg}(\Pi(t))\leq \varepsilon\rS_w(t)\big)\leq \frac{4}{\rS_w(t)^2}\sum_{i=0}^{m-1}a_i^2\leq \frac{4a_{\max}}{\rS_w(t)}\leq \sqrt{8\varepsilon}.\]

\smallskip
Finally, we have shown that for any tree $t$ and any $w\in t$ such that $\varepsilon\rS_w(t)\geq 1$ it holds that $\P\big(\rS_{\Pi_w(t)}^{\rg}(\Pi(t))\leq \varepsilon\rS_w(t)\big)\leq \sqrt{8\varepsilon}$. It is easy to see from (\ref{uniform-shuffle_spinal-weights}) that $\rS_{\Pi_w(t)}^{\rd}(\Pi(t))$ has the same law as $\rS_{\Pi_w(t)}^{\rg}(\Pi(t))$, so we also have $\P\big(\rS_{\Pi_w(t)}^{\rd}(\Pi(t))\leq \varepsilon\rS_w(t)\big)\leq \sqrt{8\varepsilon}$. Combining these bounds with (\ref{left-right_spine_step}) entails the desired inequality.
\end{proof}

Proposition~\ref{left-right_spine_height} basically expresses that in a uniformly random tree with fixed type, the pair of vertices that achieves the second-order height has the same law in the left-to-right depth-first ordering as in the right-to-left depth-first ordering of the tree. More precisely, our proof make use of the fact that the mirroring shuffle preserves the law of $T$. 

\begin{proof}[Proof of Proposition~\ref{left-right_spine_height}]
Let $s>0$. The formula (\ref{second-height_alt2}) for the second-order height, given in Proposition~\ref{second-height_alt}, yields that
\[\P\Big(\H^{(2)}(T)\max_{u\in T}\min\big(\rS_u^{\rd}(T),\rS_u^{\rg}(T)\big)\geq s\Big)\leq \P\Big(\max_{u,v\in T} (|v|-|u\wedge v|)\min\big(\rS_u^{\rd}(T),\rS_u^{\rg}(T)\big)\geq s\Big).\]
We now want to make the restriction $u\leq v$ appear by showing that
\begin{equation}
\label{left-right_spine_tool}
\xi:=\P\Big(\max_{\substack{u,v\in T\\ v<u}} (|v|-|u\wedge v|)\min\big(\rS_u^{\rd}(T),\rS_u^{\rg}(T)\big)\geq s\Big)\leq \P\Big(\max_{\substack{u,v\in T\\ u\leq v}} (|v|-|u\wedge v|)\rS_u^{\rd}(T)\geq s\Big).
\end{equation}
For $u,v\in t$, if $|v|-|u\wedge v|>0$ then $v$ cannot be an ancestor of $u$. As $s>0$, we thus have
\[\xi=\P\Big(\exists u,v\in T\, :\, v<u,u\wedge v\neq v,(|v|-|u\wedge v|)\min\big(\rS_u^{\rd}(T),\rS_u^{\rg}(T)\big)\geq s\Big).\]
We know from Proposition~\ref{properties_shuflle} $(ii)$ and $(vi)$ that maps that are induced by shuffles on trees, including the mirroring shuffle $\rM$ from (\ref{mirror}), preserve the height and the genealogical relations. Thus, for all $u,v\in t$, it holds that $|\rM_v(t)|-|\rM_u(t)\wedge \rM_v(t)|=|v|-|u\wedge v|$. Moreover, these maps are injective by Proposition~\ref{properties_shuflle} $(iii)$, so $u\wedge v\neq v$ if and only if $\rM_u(t)\wedge\rM_v(t)\neq\rM_v(t)$. More specifically to the mirroring shuffle, (\ref{spine-weight_mirror}) yields that $\min\big(\rS_{\rM_u(T)}^{\rg}(\rM(T)),\rS_{\rM_u(T)}^{\rd}(\rM(T))\big)=\min\big(\rS_u^{\rd}(T),\rS_u^{\rg}(T)\big)$. Furthermore, Proposition~\ref{mirror_reverses} ensures that $u\wedge v\neq v$ and $v<u$ if and only if $\rM_u(t)\wedge\rM_v(t)\neq\rM_{u}(t)$ and $\rM_u(t)<\rM_v(t)$. All these arguments, together with the fact that $u\in t\mapsto \rM_u(t)\in\rM(t)$ is a surjection, as given by Proposition~\ref{properties_shuflle} $(iii)$, allow us to obtain
\[\xi=\P\Big(\exists u,v\in\rM(T): u<v, u\wedge v\neq u, (|v|-|u\wedge v|)\min\big(\rS_u^{\rd}(\rM(T)),\rS_u^{\rg}(\rM(T))\big)\geq s\Big).\]
Next, we recall from Proposition~\ref{shuffle_bij_type} that the shuffle $\rM$ induces a type-preserving transformation on the set of all trees. This yields that $\rM(T)$ is uniformly distributed on $\cT(\nseq)$, as $T$ is, and so that
\[\xi=\P\Big(\exists u,v\in T\, :\,u<v, u\wedge v\neq u,(|v|-|u\wedge v|)\min\big(\rS_u^{\rd}(T),\rS_u^{\rg}(T)\big)\geq s\Big).\]
This readily entails the desired (\ref{left-right_spine_tool}), and hence concludes the proof.
\end{proof}

\section*{Acknowledgements}

This work was initiated during the 2024 \emph{Probability and Combinatorics Workshop} at McGill’s Bellairs Research Institute, in Holetown, Barbados. We thank the
organisers for inviting us and for creating a stimulating research environment.
SD acknowledges the financial support of the CogniGron research center and the Ubbo Emmius Funds (Univ. of Groningen).

\appendix

\section{Stochastic domination of the height of random plane trees}\label{app:stochdom}

In this section, we prove Theorem~\ref{thm:stoch_ord}. The proof is a straightforward adaptation of the proof of Theorem~9 in \cite{DonLAB24} from labelled rooted trees to plane trees with fixed root degree. 

 We first need some notation. Recall that for any integer $n\geq 1$, $[n]$ stands for $\{1,\ldots,n$\}. Let $t$ be a plane tree with at least two vertices and $u,v\in t$ with $u< v$. (We advice the reader to look at the examples in Figure~\ref{fig:cuttree} before reading the formal definitions below.)

 \begin{figure}
 \centering
     \begin{subfigure}{0.8\linewidth}
      \centering
         \includegraphics[scale=0.4, page=1]{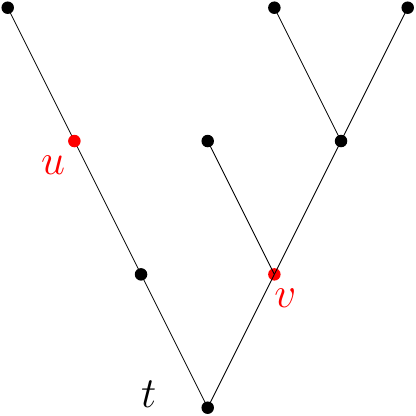}
         \hfill
        \includegraphics[scale=0.4, page=2]{eq_class1.pdf}
        \hfill
         \includegraphics[scale=0.4, page=3]{eq_class1.pdf}
         \caption{A tree $t$ with vertices $u<v$ with $u\not\prec v$ marked red, depicted with $t_{u,v}$ and $f_{u,v}(t)$.  \label{fig:cuttree1}}
     \end{subfigure}\\
     \begin{subfigure}{0.8\linewidth}
      \centering
         \includegraphics[scale=0.4, page=1]{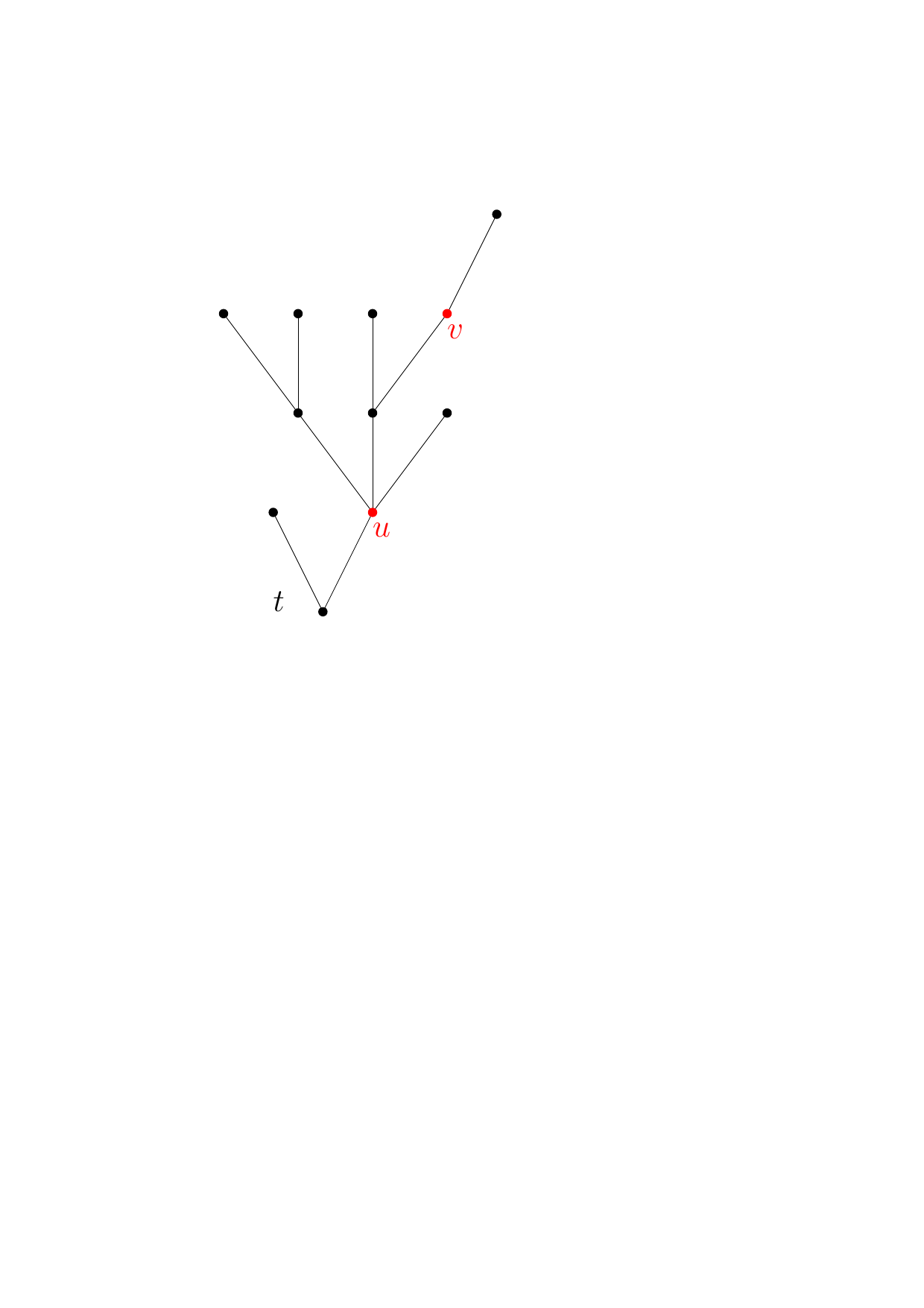}
         \hfill
        \includegraphics[scale=0.4, page=2]{eq_class2.pdf}
        \hfill
         \includegraphics[scale=0.4, page=3]{eq_class2.pdf}
         \caption{A tree $t$ with vertices $u<v$ with $u\prec v$ marked red, depicted with $t_{u,v}$ and $f_{u,v}(t)$. Observe that removing  pendant trees from $u$ has changed the label of the right-most red vertex from $v=222$ in $t$ to $212$ in $t_{u,v}$. In fact, all labels of vertices rooted at $u$ in $t_{u,v}$ have changed.\label{fig:cuttree2} }
     \end{subfigure}
     \caption{A depiction of $g(t,\{u,v\})$ for two trees $t$ and marked vertices $u,v\in t$ with $u<v$. Then $g(t,\{u,v\})$ is the array that contains the tree $t_{u,v}$ obtained by removing the unmarked subtrees rooted at children of $u$ and $v$ from $t$, the labels of the marked vertices in $t_{u,v}$ and the multiset $f_{u,v}(t)$ of the unmarked subtrees rooted at the children of $u$ and $v$. \label{fig:cuttree}}

 \end{figure}
\begin{itemize}
    \item If $u\not\prec v$ (i.e.~there is no $w\in \U \setminus\{\varnothing\}$ such that $v=u* w$) let $f_{u,v}(t)$ be the multiset of $\rk_u(t)+\rk_v(t)$ plane trees rooted at the children of $u$ and $v$ and let $t_{u,v}$ be the tree obtained by removing the trees rooted at the children of $u$ and $v$. (Then $f_{u,v}(t)\cup \{ t_{u,v}\}$ consists of the plane trees obtained by removing the edges between $u$ and $v$ and their children.) See Figure~\ref{fig:cuttree1}. Formally, for any $t'\in \cT$, let the multiplicity of $t'$ in $f_{u,v}(t)$ equal
    \[|\{i\in [\rk_u(t)]:\theta_{u*(i)}t=t'\}|+|\{i\in [\rk_v(t)]:\theta_{v*(i)}t=t'\}|\]
    and let 
    \[t_{u,v}=\{x\in t:v\not \prec x\text{ and }u\not \prec x\}.\]
    Let 
    \[g(t,\{u<v\})=\left(t_{u,v},\{u,v\},f_{u,v}(t)\right).\]
     \item If $u\prec v$ (i.e.~there is a $w\in \U\setminus \{\varnothing\}$ such that $v=u* w$) let $f_{u,v}(t)$ be the multiset of the $\rk_v(t)$ plane trees rooted at the children of $v$ and the $\rk_u(t)-1$ plane trees rooted at the children of $u$ that do not contain $v$, and let $t_{u,v}$ be the tree obtained by removing the trees rooted at the children of $v$ and the trees rooted at the children of $u$ that do not contain $v$.  (Then $f_{u,v}(t)\cup \{ t_{u,v} \}$ consists of the plane trees obtained by removing  the edges between $v$ and its children and the edges between $u$ and its children, except the child that is an ancestor of $v$.) See Figure~\ref{fig:cuttree2}. Formally, if $v=u*(w_1,\dots,w_m)$, for any $t'\in \cT$, let the multiplicity of $t'$ in $f_{u,v}(t)$ equal
    \[|\{i\in [\rk_u(t)]\setminus\{w_1\}:\theta_{u*(i)}t=t'\}|+|\{i\in [\rk_v(t)]:\theta_{v*(i)}t=t'\}|\]
    and let $t_{u,v}$ be the tree characterised by the following.
    \begin{enumerate}
    \item[$(a)$] For $x\in \U$ such that $u\not \prec x$, $x\in t_{u,v}$ if and only if $x\in t$. 
    \item[$(b)$] For $x\in \U$ such that $u \prec x$, $x\in t_{u,v}$ if and only if $x=u*(1)*y$ for some $y\in\U$ with $u*(w_1)*y\in t$ and $v\not\prec u*(w_1)*y$. 
    \end{enumerate}
    Let 
    \[g(t,\{u<v\})=\left(t_{u,v},\{u,u*(1,w_2,\dots, w_m)\},f_{u,v}(t)\right).\]
\end{itemize}  
\smallskip

For trees $t, t'$, distinct $u,v\in t$ and distinct $u',v'\in t'$, we say that $(t,\{u,v\})\sim (t',\{u',v'\})$ when $g(t,\{u,v\})=g(t',\{u',v'\})$. (Note that if, without loss of generality, $u<v$ and $u'<v'$ then $g(t,\{u,v\})=g(t',\{u',v'\})$  means that $u=u'$ and that, informally, $t'$ can be obtained from $t$ by detaching the trees rooted at $u$ that do not contain $v$ and the trees rooted at $v$ and then reattaching a subset of these trees at $u$ in some order and the trees not in this subset at $v$ in some order; if we imagine placing a mark at vertex $v$ before this procedure, then after detaching and reattaching subtrees this mark is now at vertex $v'$.)  Observe that if $(t,\{u,v\})\sim (t',\{u',v'\})$ then the type sequences of $t$ and $t'$ agree up to potentially replacing a vertex of degree $a$ and a vertex of degree $b$ by a vertex of degree $c$ and a vertex of degree $d$ for some $a,b,c,d$ that satisfy that $a+b=c+d$, and, in that case, $\{\rk_u(t),\rk_v(t)\}=\{a,b\}$ and $\{\rk_{u'}(t'),\rk_{v'}(t')\}=\{c,d\}$.

\medskip
Let $\nseq=( n_i)_{i\ge 0}$ and $\nseq'=(n'_i)_{i\ge 0}$ be type sequences of size $n\ge 3$ with $ n_r,n'_r\ge 1$. Suppose that $\nseq'$ covers $\nseq$ so that there exist $k, \ell $ with $1\le k\le \ell \le n-1 $ such that 
    \[  n_i = n'_i -\I{i=k} +\I{i=k-1}-\I{i=\ell}+\I{i=\ell+1}.\]
 Let $T$ be a uniformly random element in $\cT(\nseq)$ conditional on $\rk_\varnothing(T)=r$ and let $T'$ be a uniformly random element in $\cT(\nseq')$ conditional on $\rk_\varnothing(T')=r$. Let $U$ be a uniformly random vertex unequal to $\varnothing$ of degree $k-1$ in $T$, let $V$ be a uniformly random vertex unequal to $\varnothing$ and $U$ of degree $\ell+1$ in $T$, let $U'$ be a uniformly random vertex unequal to $\varnothing$ of degree $k$ in $T'$ and let $V'$ uniformly random vertex unequal to $\varnothing$ and $U'$ of degree $\ell$ in $T'$. Then, the following two propositions hold.
\begin{prop}\label{prop:stochdom1}
For any equivalence class $\cC$ under $\sim$, 
\[ \P\big((T,\{U,V\})\in \cC\big)=\P\big((T',\{U',V'\})\in \cC\big).\]
\end{prop}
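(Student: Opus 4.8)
The plan is to show that the two random arrays $(T,\{U,V\})$ and $(T',\{U',V'\})$ induce the same probability measure on equivalence classes under $\sim$ by a counting argument: both are, after conditioning on landing in a fixed class $\cC$, uniform on the (necessarily finite) set of representatives of $\cC$ realisable by a tree of the appropriate type with root degree $r$, and the total weight of $\cC$ is the same on both sides because $\cC$ records exactly the data $(t_{u,v},\{u,v\},f_{u,v}(t))$ which, by construction, is shared by trees in $\cT(\nseq)$ and in $\cT(\nseq')$ (as noted in the remark following the definition of $\sim$, moving along a covering relation of the second type is precisely a reattachment operation that preserves the class).

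First I would fix an equivalence class $\cC$ and a representative, i.e.\ a triple $(\hat t,\{\hat u,\hat v\},\hat f)$ with $\hat u<\hat v$. I would count, on the one hand, the pairs $(t,\{u,v\})$ with $t\in\cT(\nseq)$, $\rk_\varnothing(t)=r$, $\rk_u(t)=k-1$, $\rk_v(t)=\ell+1$ (after relabelling so $u<v$, this means $\{\rk_u,\rk_v\}=\{k-1,\ell+1\}$ in the unordered sense, but the degree constraints in the statement already pin which marked vertex has which degree — I should be careful here and treat $U$ as the degree-$(k-1)$ vertex and $V$ as the degree-$(\ell+1)$ vertex, matching the inverse construction) that map to $\cC$ under $g$; and on the other hand the analogous count with $\nseq'$, degrees $k$ and $\ell$. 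The key point is that given the data in $\cC$ — the "skeleton" $t_{u,v}$ with two marked vertices and the multiset of pendant subtrees $f_{u,v}(t)$ — reconstructing a tree in the preimage amounts to: choosing which pendant subtrees attach at the first marked vertex and which at the second (subject to the total degrees being $k-1+\ell+1 = k+\ell$ in the $\nseq$ case, and $k+\ell$ again in the $\nseq'$ case — same total!), together with a choice of planar order at each of the two marked vertices. So the number of preimages in $\cT(\nseq)$ (with the degree split $(k-1,\ell+1)$) equals $\binom{\text{something}}{\,}$-type count that depends only on the multiset $f_{u,v}(t)$ and the split into parts of sizes $k-1$ and $\ell+1$; similarly for $\nseq'$ with split $(k,\ell)$.

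Then I would compute the probability explicitly. Writing $N = \#\{t\in\cT(\nseq):\rk_\varnothing(t)=r\}$, $L_{k-1}$ and $L_{\ell+1}$ for the number of non-root vertices of the relevant degrees (which are deterministic given $\nseq$: $L_{k-1}=n_{k-1}-\I{k-1=0}$ roughly, but more carefully $L_{k-1} = n_{k-1} - \I{r = k-1}\cdot 0$... actually $n_{k-1}$ minus the indicator that the root has degree $k-1$, i.e.\ $n_{k-1}-\I{r=k-1}$; and care is needed about whether $k-1=\ell+1$ etc.), we have
\[
\P\big((T,\{U,V\})\in\cC\big) \;=\; \frac{\#\{(t,u,v): g(t,\{u,v\})\in\cC,\ t\in\cT(\nseq),\rk_\varnothing t = r,\rk_u t = k-1,\rk_v t = \ell+1\}}{N\cdot L_{k-1}\cdot L_{\ell+1}'},
\]
where $L_{\ell+1}'$ accounts for the exclusion of $\varnothing$ and $U$. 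The numerator factors as (number of preimage trees) $\times$ (ways to order the distinguished data), and after matching this against the analogous expression for $\nseq'$, the ratio of "reconstruction counts" should cancel precisely because the multiset $f_{u,v}(t)$ is the same and the arithmetic identity $(k-1)+(\ell+1) = k+\ell$ makes the partition-into-two-ordered-parts counts agree. I would need the small lemma — essentially a double counting of $N\cdot L_{k-1}\cdot L_{\ell+1}' = \#\cT(\nseq)\text{-with-root-}r$-weighted sum over classes, which is the standard "handshake" that makes the total probability $1$ on both sides and, class by class, forces equality.

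**The main obstacle** I expect is bookkeeping the degenerate cases and the exclusion constraints: when $k-1 = \ell+1$ (so $U$ and $V$ have the same degree and the two distinguished vertices are symmetric), when $k=1$ (so a "degree $k-1=0$" marked vertex is a leaf and the reattachment picture degenerates), when $k-1$ or $\ell+1$ coincides with $r$ (affecting the count of eligible non-root vertices), and the subtlety flagged in Figure~\ref{fig:cuttree2} that in the case $\hat u \prec \hat v$ the planar labels of vertices below $\hat u$ change under the reattachment — so the bijection between preimages and (subset-choice $\times$ orderings) must be set up at the level of the abstract data in $g$, not of literal words in $\U$. I would handle this by working throughout with $g(t,\{u,v\})$ as the fundamental object and checking that the reconstruction map is a genuine bijection in each of the two cases ($\hat u\prec\hat v$ and $\hat u\not\prec\hat v$) of the definition, the $\hat u\prec\hat v$ case requiring the care about which pendant subtree of $\hat u$ becomes the ancestor-of-$\hat v$ child. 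Once the bijection is pinned down, the counting identity is routine.
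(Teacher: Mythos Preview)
Your counting approach is correct and would go through, but it is genuinely different from the paper's argument. The paper does not enumerate preimages at all: instead it builds an explicit involution $\Psi$ on the set $E=\{(t,u,v):u,v\in t\setminus\{\varnothing\},u\neq v,\rk_v(t)\ge 1\}$ that moves a single subtree (the last child of $v$) onto $u$ in the generic case, and swaps all pendant subtrees in the ancestral case. This $\Psi$ preserves the $\sim$-class, sends the uniform support of $(T,U,V)$ bijectively onto the uniform support of $(T',V',U')$, and is its own inverse; equality of the two probabilities then drops out without any multiset-ordering computation and without ever comparing the two denominators $N L_{k-1}L_{\ell+1}$ and $N'L_kL_\ell$ directly.

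Your route, by contrast, computes the number of ordered triples in each class on both sides and uses the observation that the number of total orderings of the multiset $\hat f$ does not depend on where one places a separator; this gives equal numerators, and equality of denominators then follows from summing over classes. This is sound, and the ancestral case does work out: with $|\hat f|=k+\ell-1$, the contribution from ``ancestor has degree $a$'' is $a\cdot(k+\ell-1)!/\prod\mu_i!$, and summing over $a\in\{k-1,\ell+1\}$ or $a\in\{k,\ell\}$ gives the same total $k+\ell$ (the boundary case $k=1$, where the ancestor cannot have degree $k-1=0$, is absorbed by the factor $a=0$). Two of your flagged degeneracies are in fact vacuous: since $1\le k\le\ell$, the case $k-1=\ell+1$ never occurs, so on the $\nseq$ side $U$ and $V$ always have distinct degrees and the exclusion $V\neq U$ is automatic. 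The trade-off is that the paper's bijection handles all cases uniformly in a few lines, while your enumeration requires the separate non-ancestral/ancestral bookkeeping and the ``sum to one'' trick to identify the normalising constants; on the other hand, your argument makes the underlying symmetry (independence of the split point) completely transparent.
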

\begin{prop}\label{prop:stochdom2}
    Let $\cC$ be an equivalence class under $\sim$ with $\p{(T,\{U,V\})\in \cC}>0$. 
    Then, for any $z\in\R$,
    \[\P\big(\H(T)>z \, |\, (T,\{U,V\})\in \cC\big)\le \P\big(\H(T')>z \, |\, (T',\{U',V'\})\in \cC\big).\]
\end{prop}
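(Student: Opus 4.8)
\textbf{Proof proposal for Proposition~\ref{prop:stochdom2}.}

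The plan is to fix an equivalence class $\cC$ under $\sim$ with $\p{(T,\{U,V\})\in\cC}>0$ and to analyse the conditional laws of $T$ and $T'$ given membership in $\cC$ directly from the combinatorial description of $\cC$. Pick a representative $(t_0,\{u_0,v_0\})\in\cC$, written so that $u_0<v_0$; recall that $g(t_0,\{u_0,v_0\})=(t_{u_0,v_0},\{u_0,v_0\},f_{u_0,v_0}(t_0))$, and that $\cC$ is entirely determined by the ``core'' tree $\mathsf{s}:=t_{u_0,v_0}$, the position $u_0$ in $\mathsf{s}$ of the vertex whose pendant subtrees are being redistributed, the position $v_0$ in $\mathsf{s}$ of the marked vertex, and the multiset $\mathsf{F}:=f_{u_0,v_0}(t_0)$ of pendant trees. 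First I would observe that, since $(T,\{U,V\})$ is obtained by first sampling a uniform tree of the appropriate type with fixed root degree and then marking a uniform ordered pair of vertices of degrees $(k-1,\ell+1)$ (resp.\ $(k,\ell)$ for $T'$), the conditional law of $(T,\{U,V\})$ given $(T,\{U,V\})\in\cC$ is \emph{uniform} over the elements of $\cC$ of the form $(t,\{u,v\})$ with $\rk_u(t)=k-1$, $\rk_v(t)=\ell+1$ (and similarly for $T'$ with degrees $k,\ell$). This uniformity is where Proposition~\ref{prop:stochdom1} is the relevant companion fact: it guarantees the two conditional probabilities $\p{(T,\{U,V\})\in\cC}$ and $\p{(T',\{U',V'\})\in\cC}$ are equal, so that we are genuinely comparing two uniform measures on two explicitly described finite sets.

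Next I would make the set $\cC$ explicit. Given the core data $(\mathsf{s},u_0,v_0,\mathsf{F})$ with $|\mathsf{F}|=(k-1)+(\ell+1)=k+\ell$ pendant trees, an element of $\cC$ is produced by choosing a sub-multiset $\mathsf{A}\subseteq\mathsf{F}$ with $|\mathsf{A}|=\rk_{u}-[\,u\prec v\,]$ (the trees to reattach under the vertex playing the role of $u_0$, accounting for the spinal child when $u\prec v$) and a linear ordering of $\mathsf{A}$ and of $\mathsf{F}\setminus\mathsf{A}$; the case $u\not\prec v$ and $u\prec v$ must be handled, but in both the total number of reconstructions with a prescribed split $(|\mathsf{A}|,|\mathsf{F}\setminus\mathsf{A}|)=(a,b)$ is the same fixed combinatorial count $N(a,b;\mathsf{F})$ depending only on $a$, $b$ and the multiplicities in $\mathsf{F}$ (a multinomial-type quantity). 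For $T$ we have $(a,b)=(k-1,\ell+1)$ and for $T'$ we have $(a,b)=(k,\ell)$. Crucially, the resulting tree's height depends on this reconstruction \emph{only} through which pendant trees land under $u$ versus under $v$ and at what depths, since orderings of children never affect heights; and by Proposition~\ref{second-height_alt}-type reasoning (purely the definition of $\H$) the height of the reconstructed tree equals the maximum of $\H(\mathsf{s})$, $|u|+\max_{\tau\in\mathsf{A}}(1+\H(\tau))$, and $|v|+\max_{\tau\in\mathsf{F}\setminus\mathsf{A}}(1+\H(\tau))$, where $|u|,|v|$ are the depths in $\mathsf{s}$ (with the depth bookkeeping adjusted in the $u\prec v$ case because the core then attaches the $\mathsf{A}$-trees one level below, but this is a deterministic shift).

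The comparison then reduces to a one-variable stochastic-domination statement about \emph{uniformly random balanced partitions of a multiset}. Order the heights $h_1\ge h_2\ge\cdots\ge h_{k+\ell}$ of the pendant trees in $\mathsf{F}$. Reconstructing $T$ corresponds to placing these $k+\ell$ items into two bins of sizes $k-1$ and $\ell+1$ uniformly at random, while reconstructing $T'$ uses bin sizes $k$ and $\ell$ (and we should also check that $u_0$ in the core is the vertex of degree $\le\ell$, so that $v_0$ — the marked vertex — is the one whose bin shrinks from $\ell+1$ to $\ell$; WLOG via Proposition~\ref{mirror_reverses}-style relabelling, and this is exactly the content of the $\{\rk_u,\rk_v\}=\{a,b\}$ remark after the definition of $\sim$). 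Since $|u|\le|v|$ can be arranged — $u$ is the ``splitting'' vertex and we may take it to be an ancestor or the lexicographically smaller one — moving one item from the $v$-bin (depth $|v|$, the \emph{deeper} side) to the $u$-bin (depth $|u|$, the \emph{shallower} side) can only decrease, never increase, the contribution $\max(|u|+\cdots,\,|v|+\cdots)$; hence $T'$'s reconstructed height is pointwise $\ge T$'s under the natural coupling. Making this precise is the routine but fiddly part: I would set up an explicit coupling of the uniform $(k-1,\ell+1)$-partition and the uniform $(k,\ell)$-partition of the \emph{same} labelled multiset (e.g.\ sample a uniform ordering of all $k+\ell$ pendant trees; the first $k-1$ vs.\ first $k$ go to the $u$-bin), verify both marginals are uniform, check $\H(T)\le\H(T')$ holds surjectively on $\cC$ under this coupling — so that, using that the two conditional laws are uniform and $\cC$ decomposes as a disjoint union over cores with matching counts $N(k-1,\ell+1;\mathsf{F})$ and $N(k,\ell;\mathsf{F})$ that cancel in the ratio — the domination of heights follows. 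The \textbf{main obstacle} I anticipate is the depth bookkeeping in the $u\prec v$ case: there the ``$u$-bin'' trees in $t_{u,v}$ are reattached starting at $u*(1)$, i.e.\ one level below where the analogous trees in the $u\not\prec v$ case sit, and the marked vertex $v$ may itself be a descendant whose depth changes between $t$ and $t_{u,v}$ (as the figure's caption flags). One must check that these shifts are identical for the $T$-reconstruction and the $T'$-reconstruction of the same core, so that they drop out of the comparison; the cleanest way is to phrase everything in terms of the fixed core $t_{u,v}$ and its fixed vertices $u_0,v_0$, never referring back to $t$, at which point the depth offsets are constants attached to the core and the only randomness left is the bin assignment.
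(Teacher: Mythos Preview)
Your reduction to a ``two-bin'' allocation problem is the right picture, but the argument has a genuine gap that makes it prove the wrong inequality.

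The core error is the assumption that, conditionally on $(T,\{U,V\})\in\cC$, the bin at the fixed position $u_0$ always receives $k-1$ trees and the bin at $v_0$ always receives $\ell+1$ trees. This is false: you are conditioning on the \emph{unordered} pair $\{U,V\}$, and since $U$ has degree $k-1$ while $V$ has degree $\ell+1$, either of $U,V$ can land at either of the two marked positions in the core. Concretely, the subset of pendant trees reattached at (say) $v_0$ has size $k-1$ \emph{or} $\ell+1$, and the conditional law makes it a uniform element of $\binom{[k+\ell]}{k-1}\cup\binom{[k+\ell]}{\ell+1}$; likewise for $T'$ one gets a uniform element of $\binom{[k+\ell]}{k}\cup\binom{[k+\ell]}{\ell}$. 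Your sentence ``for $T$ we have $(a,b)=(k-1,\ell+1)$'' collapses this union to a single size, and that is not the conditional law.

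This error is not cosmetic. Under your fixed assignment and your WLOG $|u_0|\le|v_0|$, your own monotonicity observation (moving a tree from the deep bin to the shallow bin can only decrease the height) says that going from the $(k-1,\ell+1)$ split to the $(k,\ell)$ split \emph{decreases} the height pointwise --- i.e.\ your coupling yields $\H(T')\le\H(T)$, the reverse of what you claim and of what is to be proved. That the coupling points the wrong way is the signal that the setup is off. (Relatedly, the WLOG $|u_0|\le|v_0|$ cannot be arranged by relabelling under your asymmetric split: lexicographic order does not control depth, and your bin sizes are not exchangeable.)

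The paper's proof differs exactly at this point. It keeps the two-size union and observes that, once one fixes the deeper vertex $y$, the height exceeds $z$ (in the nontrivial range) iff $\max_{i\in\rA}c_i>z-|y|$, where $\rA$ is uniform on $\binom{[k+\ell]}{k-1}\cup\binom{[k+\ell]}{\ell+1}$ for $T$ and on $\binom{[k+\ell]}{k}\cup\binom{[k+\ell]}{\ell}$ for $T'$. The stochastic comparison between these two random maxima is not a pointwise coupling; it is the content of the \emph{eggs-in-one-basket lemma} (Lemma~\ref{lem:ei}), which is the missing ingredient in your argument. The $u\prec v$ case is handled by the second part of the same lemma with $k+\ell-1$ pendant trees.
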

We first show how Propositions~\ref{prop:stochdom1}~and~\ref{prop:stochdom2} imply Theorem~\ref{thm:stoch_ord}.
\begin{proof}[Proof of Theorem~\ref{thm:stoch_ord} using Propositions~\ref{prop:stochdom1}~and~\ref{prop:stochdom2}]
Fix $r\ge 1$ and let $\nseq=( n_i)_{i\ge 0}$ and $\nseq'=(n'_i)_{i\ge 0}$ be types with $ n_r,n'_r \ge 1$ so that $\nseq'$ covers $\nseq$. Suppose that $\nseq$ has size $n$. Let $T$ be a uniformly random element in $\cT(\nseq)$ and let $T'$ be a uniformly random element in $\cT(\nseq')$ conditional on $\rk_\varnothing(T)=\rk_\varnothing(T')=r$. Then, it is sufficient to show that for any $z\in \R$ it holds that $\p{\H(T)>z}\le \p{\H(T')>z}$.

Recall from Section~\ref{sec:depth_breadth} that $X^{\dfs}(T)$ is the depth-first walk of $T$. If $n'_i= n_i+\I{i=1}$, then a process distributed as $X^\dfs(T')$ can be obtained by sampling  $K$ uniformly on $\{0,\ldots,n\!-\!1\}$ and taking $(0,X^\dfs_1(T),\dots,X^\dfs_{K-1}(T),  X^\dfs_{K}(T), X^\dfs_{K}(T), X^\dfs_{K+1}(T),\dots, X^\dfs_{n}(T))$. The encoded tree (that is distributed as $T'$) is obtained from $T$ by replacing the edge that connects $u^\dfs_{K}(T)$ to its parent by a path of length $2$  when $K\geq 1$ or by adding a parent to the root of $T$ when $K=0$. Thus this modification does not decrease the height of $T$, so it follows that for any $z\in \R$ it holds that $\p{\H(T)>z}\le \p{\H(T')>z}$.  

Secondly, assume that there exist $k, \ell $ with $1\le k\le \ell \le n-1 $ such that 
    \[  n_i = n'_i -\I{i=k} +\I{i=k-1}-\I{i=\ell}+\I{i=\ell+1}.\]
Like before, let $U$ be a uniformly random vertex unequal to $\varnothing$ of degree $k-1$ in $T$, let $V$ be a uniformly random vertex unequal to $\varnothing$ and $U$ of degree $\ell+1$ in $T$, let $U'$ be a uniformly random vertex unequal to $\varnothing$ of degree $k$ in $T'$ and let $V'$ uniformly random vertex unequal to $\varnothing$ and $U'$ of degree $\ell$ in $T'$. Then, writing $\sum_\cC$ for a summation over all $\sim$-equivalence classes $\cC$, we have
\begin{align*}
    \P\big(\H(T)>z\big)&=\sum_\cC\P\big(\H(T)>z \, |\,  (T,\{U,V\}) \in \cC\big)\P\big( (T,\{U,V\}) \in \cC\big)\\
    &=\sum_\cC\P\big(\H(T)>z \, |\, (T,\{U,V\}) \in \cC\big)\P\big( (T',\{U',V'\}) \in \cC\big)\\
    &\le \sum_\cC\P\big(\H(T')>z\, |\, (T',\{U',V'\}) \in \cC\big)\P\big( (T',\{U',V'\}) \in \cC\big)\\
    &=\P\big(\H(T')>z\big),
\end{align*}
where the second equality follows from Proposition~\ref{prop:stochdom1} and the inequality follows from Proposition~\ref{prop:stochdom2}. This proves the statement.
\end{proof}

\begin{proof}[Proof of Proposition~\ref{prop:stochdom1}]
We define a set of trees equipped with two vertices by setting \[E=\{(t,u,v)\in\cT\times \U^2\, :\, u,v\in t\setminus\{\varnothing\}, u\neq v,\rk_v(t)\geq 1\}.\]
With this notation, the laws of $(T,U,V)$ and $(T',U',V')$ are respectively uniform on
\begin{align*}
\{(t,u,v)\in E\, &:\, t\in\cT(\nseq),\rk_{\varnothing}(t)=r, \rk_u(t)=k-1,\rk_v(t)=\ell+1\}, \text{ and }\\
\{(t,u,v)\in E\, &:\, t\in\cT(\nseq'),\rk_\varnothing(t)=r,\rk_u(t)=k,\rk_v(t)=\ell\}.
\end{align*}
The strategy of the proof is to couple $(T,\{U,V\})$ and $(T',\{U',V'\})$ so that they are almost surely equivalent for the relation $\sim$. To do this, we construct a bijective mapping $\Psi:(t,u,v)\in E\longmapsto (t^\star,u^\star,v^\star)\in E$ that satisfies that $(t,\{u,v\})\sim (t^\star,\{u^\star,v^\star\})$ and $\Psi(T,U,V)\eqdist (T',V',U')$. We define $\Psi(t,u,v)$ as follows.

If the last child of $v$ is an ancestor of $u$ or equal to $u$, i.e.~if there is $w\in\U$ such that $u=v*(\rk_v(t))*w$, then we let $t^\star$ be the tree obtained by exchanging the subtrees rooted at the children of $u$ and $v$ respectively (excluding the subtree of $v$ that contains $u$). If we place marks on $u$ and $v$ in $t$, then after exchanging the subtrees, we call the vertices in $t^\star$ with these marks $u^\star$ and $v^\star$ respectively. See Figure~\ref{fig:psi}. Formally, $u^\star=v*(\rk_u(t)+1)*w$ and $v^\star=v$, and $t^\star$ is the tree characterised by the following.
\begin{enumerate}
\item[$(a)$] For all $x\in \U$, $j\in [\rk_u(t)]$, we have that $v*(j)*x\in t^\star$ if and only if $u*(j)*x\in t$;
\item[$(b)$] For all $x\in \U$, $j\in[\rk_v(t)-1]$, we have that $v*(\rk_u(t)+1)*w*(j)*x\in t^\star$ if and only if $v*(j)*x\in t$;
\item[$(c)$] For all $x\in\U$ with $v\not\prec x$, we have $x\in t^\star$ if and only if $x\in t$;
\item[$(d)$] For all $x\in\U$ with $w\not\prec x$, we have that $v*(\rk_u(t)+1)*x\in t^\star$ if and only if $v*(\rk_v(t))*x\in t$;
\item[$(e)$] For all $x\in\U$ and $j\geq 1$, we have that $v*(\rk_u(t)+1+j)*x\notin t^\star$ and that $v*(\rk_u(t)+1)*w*(\rk_v(t)-1+j)*x \notin t^\star$.
\end{enumerate}
Otherwise, i.e.~if $u\neq v*(\rk_v(t))$ and $v*(\rk_v(t))\not\prec u$, then we let $t^\star$ be the tree obtained by removing the subtree rooted at the last child of $v$ and reattaching it after the last child of $u$, and we get $u^\star,v^\star$ by exchanging the roles of $u,v$. See Figure~\ref{fig:psi}. Formally, $u^\star=v$ and $v^\star=u$, and $t^\star$ is the tree characterised by the following.
\begin{enumerate}
\item[$(a)$] For all $x\in \U$, we have $u*(\rk_u(t)+1)*x\in t^\star$ if and only if $v*(\rk_v(t))*x\in t$;
\item[$(b)$] For all $x\in \U$, we have $v*(\rk_v(t))*x \not \in t^\star$;
\item[$(c)$] For all $x\not \in u*(\rk_u(t)+1)*\U\cup v*(\rk_v(t)))*\U$, we have $x\in t^\star$ if and only if $x\in t$. 
\end{enumerate}

\begin{figure}
\centering
    \includegraphics[page=1, scale=0.45]{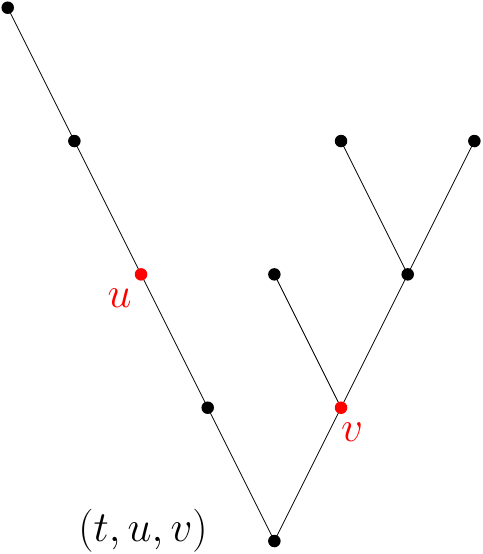}\hfill
    \includegraphics[page=5, scale=0.45]{images/map_psi.pdf}
    \hfill
    \includegraphics[page=3, scale=0.45]{images/map_psi.pdf}\\
    \vspace{1em}
    \includegraphics[page=2, scale=0.45]{images/map_psi.pdf}
    \hfill
      \includegraphics[page=6, scale=0.45]{images/map_psi.pdf}\hfill
    \includegraphics[page=4, scale=0.45]{images/map_psi.pdf} 

    \caption{\label{fig:psi} In the top row, a depiction of three trees with marked points, and in the bottom row their respective images under $\Psi$. In the rightmost example, the last child of $v$ is an ancestor of $u$ so we swap all pendant subtrees; in the other two examples this is not the case so we move one subtree.}
\end{figure}

It is easy to see that in both cases, we indeed have $(t^\star,u^\star,v^\star)\in E$. We also observe that $\rk_\varnothing(t^\star)=\rk_\varnothing(t)$, $\rk_{v^\star}(t^\star)=\rk_u(t)+1$, $\rk_{u^\star}(t^\star)=\rk_v(t)-1$, and that if the type of $t$ is $( n_i(t))_{i\geq 0}$ then the type of $t^\star$ is \[( n_i(t)-\I{i=\rk_u(t)}+\I{i=\rk_u(t)+1}-\I{i=\rk_v(t)}+\I{i=\rk_v(t)-1})_{i\geq 0}.\]
Moreover, by noting that $u^\star\in v^\star*(\rk_{v^\star}(t^\star))*\U$ if and only $u\in v*(\rk_v(t))*\U$, we easily check that $\Psi$ is an involution of $E$, so it is bijective. It follows that if $(T^\star,U^\star,V^\star)=\Psi(T,U,V)$ then $(T^\star,V^\star,U^\star)$ has the same law as $(T',U',V')$.

Finally, we observe that $(t^\star,\{u^\star,v^\star\})\sim (t,\{u,v\})$ by construction. This implies that $(T^\star,\{V^\star,U^\star\})\in \mathcal{C}$ if and only if $(T,\{U,V\})\in\mathcal{C}$, which concludes the proof.
\end{proof}

For the proof of Proposition~\ref{prop:stochdom2}, we use the ``eggs-in-one-basket lemma'', that appears as Lemma 29 in \cite{DonLAB24}. Write ${[n] \choose k}=\{S \subset [n]: |S|=k\}$. Below we use the convention that $\max \emptyset = 0$.
\begin{lem}[Eggs-in-one-basket lemma \cite{DonLAB24}]\label{lem:ei}
Fix two integers $1\le k\le \ell$ and non-negative real numbers $0 < c_1 \le \ldots \le c_{k+\ell}$.
\begin{enumerate}
\item Let $\rA$ be a uniformly random element in ${[k+\ell] \choose k-1} \cup {[k+\ell] \choose \ell+1}$ and let $\rA'$ be a uniformly random element in ${[k+\ell] \choose k} \cup {[k+\ell] \choose \ell}$. Then $\p{\max_{i\in \rA} c_i>z}\le  \p{\max_{i\in \rA'}c_i>z}$ for all $z\in \R$.
\item Let $\rB $ be a uniformly random element in $ {[k+\ell-1] \choose k-1 } \cup {[k+\ell-1] \choose \ell +1 }$ and let $\rB' $ be a uniformly random element in $ {[{k+\ell}-1] \choose k} \cup {[{k+\ell}-1] \choose \ell}$. Then $\p{\max_{i\in\rB}c_i >z}\le \p{ \max_{i\in\rB'}c_i>z}$ for all $z\in \R$. 
\end{enumerate}
\end{lem}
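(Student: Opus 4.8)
The plan is to first reduce both inequalities to a statement about the maximal \emph{index} occurring in a random subset, and then verify an explicit combinatorial identity. Set $c_0:=0$, so that $c_0\le c_1\le\cdots$ and, with the stated convention, $\max_{i\in S}c_i=c_{\max S}$ for every subset $S$ (reading $\max S$ as the largest index of $S$, with $\max\emptyset=0$). Since $j\mapsto c_j$ is non-decreasing, $\I{c_{\max S}>z}$ is a non-decreasing function of $\max S$; hence it is enough to prove that $\max\rA$ is stochastically dominated by $\max\rA'$, and $\max\rB$ by $\max\rB'$, i.e.\ that $\p{\max\rA\le t}\ge\p{\max\rA'\le t}$ for every integer $t$, and likewise for $\rB,\rB'$.

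\emph{Part $(1)$.} Write $N=k+\ell$. Since $(k-1)+(\ell+1)=N=k+\ell$, we have $\binom{N}{k-1}=\binom{N}{\ell+1}$ and $\binom{N}{k}=\binom{N}{\ell}$. Using that a uniformly random $j$-subset $S$ of $[N]$ satisfies $\p{\max S\le t}=\binom{t}{j}/\binom{N}{j}$, and setting $f_j(t):=\binom{t}{j}/\binom{N}{j}$, one computes (the case $k=\ell$ being degenerate but harmless) that $\p{\max\rA\le t}=\tfrac12\big(f_{k-1}(t)+f_{\ell+1}(t)\big)$ and $\p{\max\rA'\le t}=\tfrac12\big(f_k(t)+f_\ell(t)\big)$, so the required domination is equivalent to
\[f_{k-1}(t)-f_k(t)\ \ge\ f_\ell(t)-f_{\ell+1}(t)\qquad\text{for every integer }t\ge0.\]
To prove this I would use the elementary identity, obtained by a one-line computation,
\[f_{j-1}(t)-f_j(t)\ =\ \frac{(N-t)\,t!\,(N-j)!}{N!\,(t-j+1)!}\qquad(j-1\le t\le N),\]
together with the trivial facts that $f_{j-1}(t)-f_j(t)=0$ when $t<j-1$ and that $f_j(t)=1$ for all $j$ once $t\ge N$. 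Applying the identity with $j=k$ on the left and $j=\ell+1$ on the right, and disposing of the cases $t\ge N$ and $k-1\le t<\ell$ (where at least one difference vanishes), the inequality on the remaining range $\ell\le t\le N-1$ reduces, after cancelling the common positive factor $(N-t)\,t!/N!$ and using $N-k=\ell$, $N-\ell-1=k-1$, to $\ell!\,(t-\ell)!\ \ge\ (k-1)!\,(t-k+1)!$. Both sides have the form $a!\,(t-a)!=t!/\binom{t}{a}$ with $a+(t-a)=t$; since $a\mapsto a!\,(t-a)!$ is symmetric about $a=t/2$ and unimodal (minimised at the centre), this is equivalent to $|\ell-\tfrac t2|\ge|k-1-\tfrac t2|$, which after squaring and dividing by $\ell-k+1\ge1$ becomes $t\le k+\ell-1=N-1$ — exactly the range we are in. This proves Part $(1)$.

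\emph{Part $(2)$.} I would deduce it from Part $(1)$ by conditioning on the largest coordinate. The probability that $N=k+\ell$ lies in $\rA$ equals $\tfrac12\big(\tfrac{k-1}{N}+\tfrac{\ell+1}{N}\big)=\tfrac12$, and similarly $\p{N\in\rA'}=\tfrac12\big(\tfrac kN+\tfrac\ell N\big)=\tfrac12$. Moreover every subset belonging to $\binom{[N-1]}{k-1}\cup\binom{[N-1]}{\ell+1}$ carries the same $\rA$-probability (namely $\tfrac12\binom{N}{k-1}^{-1}$, using $\binom{N}{k-1}=\binom{N}{\ell+1}$), so conditioning $\rA$ on the event $\{N\notin\rA\}=\{\max\rA\le N-1\}$ produces exactly the uniform law on that union — which is the law of $\rB$; likewise $\rB'$ is $\rA'$ conditioned on $\{\max\rA'\le N-1\}$. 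Hence for $0\le t\le N-1$ we get $\p{\max\rB\le t}=2\,\p{\max\rA\le t}$ and $\p{\max\rB'\le t}=2\,\p{\max\rA'\le t}$, and the inequality for $\rB,\rB'$ follows at once from Part $(1)$.

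The part I expect to require the most care is the bookkeeping in Part $(1)$: identifying exactly on which range of $t$ each difference $f_{j-1}(t)-f_j(t)$ is governed by the displayed identity rather than being trivially zero, and checking that the resulting clean hypothesis is precisely $t\le k+\ell-1$, so that the inequality is genuinely tight at the two ends of the relevant range (this is why one cannot expect a cruder, purely term-by-term comparison to work). Once the reduction to $\ell!\,(t-\ell)!\ge(k-1)!\,(t-k+1)!$ is in place, and once one notices that both conditioning normalisers in Part $(2)$ equal $\tfrac12$, the rest is routine.
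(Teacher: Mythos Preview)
The paper does not give its own proof of this lemma; it is simply quoted from \cite{DonLAB24} (as Lemma~29 there) and used as a black box. So there is nothing in the present paper to compare your argument against.

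Your proof is correct. The reduction to stochastic domination of $\max\rA$ by $\max\rA'$ is valid because $j\mapsto c_j$ is non-decreasing, and your explicit computation of $f_{j-1}(t)-f_j(t)$ and the subsequent factorial inequality $\ell!\,(t-\ell)!\ge (k-1)!\,(t-k+1)!$ are both right; the final reduction to $t\le N-1$ via the symmetry and unimodality of $a\mapsto a!(t-a)!$ is clean. The deduction of Part~(2) from Part~(1) by conditioning on $\{N\notin\rA\}$ is especially elegant: the key observation that both $\P(N\in\rA)$ and $\P(N\in\rA')$ equal exactly $\tfrac12$ (since the two admissible sizes always sum to $N$) is what makes the normalising constants cancel.

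Two cosmetic points. First, in Part~(1) you explicitly treat the ranges $t\ge N$, $k-1\le t<\ell$, and $\ell\le t\le N-1$, but you never mention $0\le t<k-1$; there all four $f_j(t)$ vanish, so both differences are zero and the inequality holds trivially---just say so. Second, your sentence ``$f_j(t)=1$ for all $j$ once $t\ge N$'' is literally false for the formula $f_j(t)=\binom{t}{j}/\binom{N}{j}$ you wrote down (it exceeds $1$ for $t>N$); what you mean is that the \emph{probability} $\P(\max S\le t)$ equals $1$ in that range, which is of course what matters. Neither point affects the validity of the argument.
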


\begin{proof}[Proof of Proposition~\ref{prop:stochdom2}]
We follow the proof of Proposition 28 in \cite{DonLAB24}. Since we assume that $\p{(T,\{U,V\})\in \cC}>0$, we can fix a tree $t$ equipped with two distinct vertices $x,y\in t$ such that $x<y$ and $(t,\{x,y\})\in\mathcal{C}$.

 We first assume that $x$ and $y$ are not ancestrally related. Then, to obtain $T$ conditional on $(T,\{U,V\})\in \mathcal{C}$, we need to sample $(U,V)=(x,y)$ or $(U,V)=(y,x)$ (both with probability $1/2$) and a uniformly random ordering of the $k+\ell$ trees in the multiset $f_{x,y}(t)$, and then define $T$ to be the tree obtained by attaching the first $k-1$ trees in the ordering at $U$ and the last $\ell+1$ trees in the ordering at $V$. Let $t_1,\dots, t_{k+\ell}$ be the trees in the multiset $f_{x,y}(t)$. Thus, for $\rA $ a uniformly random element in $ {[k+\ell] \choose k-1} \cup {[k+\ell] \choose \ell+1}$, the submultiset of trees attached at $y$ has the same law as $\{t_i:i\in \rA\}$. Then, writing $h_0=\H(t_{x,y})$, $h_x=|x|$, $h_y=|y|$, and $c_i=1+\H(t_i)$ for $i\in [k+\ell]$, we see that under the conditioning that $(T,\{U,V\})\in \cC$ we have that 
\[\H(T)\eqdist\max\big(h_0, h_y+\max\{c_i, i\in \rA\}, h_x+\max\{c_i, i \in [k+\ell]\setminus \rA\}\big).\]

Similarly, for $\rA' $ a uniformly random element in $ {[k+\ell] \choose k} \cup {[k+\ell] \choose \ell}$, the submultiset of trees attached at $y$ in $T'$ conditionally on $(T',\{U',V'\})\in \cC$ has the same law as $\{t_i:i\in \rA'\}$, so under this conditioning we have that 
\[\H(T')\eqdist\max\big(h_0, h_y+\max\{c_i, i\in \rA'\}, h_x+\max\{c_i, i \in [k+\ell]\setminus \rA'\}\big).\]

Since $\rA$ and $[k+\ell]\setminus \rA$ have the same distribution, and so do $\rA'$ and $[k+\ell]\setminus \rA'$, we may assume without loss of generality that $h_y\ge h_x$. Thus, under the conditioning, for $M^-=\max(h_0, h_x+\max\{c_i, i \in [k+\ell]\})$ and $M^+=\max(h_0, h_y+\max\{c_i, i \in [k+\ell]\})$, it holds that 
\[M^- \le \H(T)\le M^+\quad\text{ and }\quad M^- \le \H(T')\le M^+.\]
Therefore, we only need to show that for all $M^- \le z< M^+$, it holds that 
\[\p{\H(T)>z \mid (T,\{U,V\})\in \cC}\le \p{\H(T')>z \mid (T',\{U',V'\})\in \cC}.\]
But, we see that for such $z$, 
\begin{align*}
\p{\H(T)>z \mid (T,\{U,V\})\in \cC}&=\p{h_y+\max\{c_i, i\in \rA\}>z}\text{ and }\\\p{\H(T')>z \mid (T',\{U',V'\})\in \cC}&=\p{h_y+\max\{c_i, i\in \rA'\}>z},
\end{align*}
so the first part of the eggs-in-one-basket lemma implies the result.

Now let us assume that $x\prec y$.  The proof proceeds very similar to the previous case. Let $t_1,\dots, t_{k+\ell-1}$ be the trees in the multiset $f_{x,y}(t)$. Then, for $\rB$ a uniformly random element in $\binom{[k+\ell-1]}{k-1} \cup \binom{[k+\ell-1]}{\ell+1}$, the submultiset of trees attached at $y$ in $T$ conditional on $(T,\{U,V\})\in \cC$ has the same law as $\{t_i:i\in \rB\}$. Then, writing $h_0=\H(t_{x,y})$, $h_x=|x|$, $h_y=|y|$, and $c_i=1+\H(t_i)$ for $i\in [k+\ell-1]$, we see that under the conditioning that $(T,\{U,V\})\in \cC$ we have that 
\[\H(T)\eqdist\max\big(h_0, h_y+\max\{c_i, i\in \rB\}, h_x+\max\{c_i, i \in [k+\ell]\setminus \rB\}\big).\]

Similarly, for $\rB'$ a uniformly random element in $\binom{[k+\ell-1]}{k} \cup \binom{[k+\ell-1]}{\ell}$, the submultiset of trees attached at $y$ in $T'$ conditional on $(T',\{U',V'\})\in \cC$ has the same law as $\{t_i:i\in \rB'\}$, so under the conditioning we have that 
\[\H(T')\eqdist\max\big(h_0, h_y+\max\{c_i, i\in \rB'\}, h_x+\max\{c_i, i \in [k+\ell]\setminus \rB'\}\big).\]

Since we assumed that $x\prec y$, we have that $h_y>h_x$. Thus, under the conditioning, for $M^-=\max(h_0, h_x+\max\{c_i, i \in [k+\ell-1]\})$ and $M^+=\max(h_0, h_y+\max\{c_i, i \in [k+\ell-1]\})$, it holds that 
\[M^- \le \H(T)\le M^+\quad\text{ and }\quad M^- \le \H(T')\le M^+.\]
Therefore, we only need to show that for all $M^- \le z< M^+$, it holds that 
\[\p{\H(T)>z \mid (T,\{U,V\})\in \cC}\le \p{\H(T')>z \mid (T',\{U',V'\})\in \cC}.\]
But, we see that for such $z$, 
\begin{align*}
\p{\H(T)>z \mid (T,\{U,V\})\in \cC}&=\p{h_y+\max\{c_i, i\in \rB\}>z}\text{ and }\\
\p{\H(T')>z \mid (T',\{U',V'\})\in \cC}&=\p{h_y+\max\{c_i, i\in \rB'\}>z},
\end{align*}
so the second part of the eggs-in-one-basket lemma implies the result.
\end{proof}

\bibliographystyle{plainnat}
\bibliography{height_width}

\end{document}